\theoremstyle{plain}
\newtheorem{theorem}{Theorem}[section]
\newtheorem{corollary}[theorem]{Corollary}
\newtheorem{lemma}[theorem]{Lemma}
\newtheorem{observation}[theorem]{Observation}
\theoremstyle{definition}
\newtheorem{definition}[theorem]{Definition}
\newtheorem{question}[theorem]{Question}
\theoremstyle{remark}
\newtheorem*{remark}{Remark}
\title{\scshape
  New Constructions related to the Polynomial Sphere Recognition Problem}
\author{Johannes Carmesin\thanks{University of Birmingham 
} \and Lyuben Lichev\thanks{Ecole Normale Supérieure de Lyon}}
\begin{document}

\maketitle
 \begin{abstract}
We construct a simply connected $2-$complex $C$ embeddable in $3-$space such 
that for any 
embedding of $C$ in $\mathbb S^3$, any edge contraction forms a minor of the $2-$complex 
not embeddable in $3-$space. We achieve this by proving that every edge of $C$ forms a 
nontrivial knot in any of the embeddings of $C$ in $\mathbb S^3$.
 \end{abstract}

\section{Introduction}

Given a triangulation of a compact $3-$manifold, is there a polynomial
time algorithm to decide whether this $3-$manifold is homeomorphic to
the $3-$sphere? This is the Polynomial Sphere Recognition Problem.

\vspace{.3cm}

This problem has fascinated many mathematicians. Indeed, in 1992, Rubinstein proved that there is an algorithm that decides whether a given compact triangulated 3-manifold is isomorphic to the 3-sphere. This was simplified by Thompson \citep{MR1295555}.\footnote{See for example \citep{Sch11} for details on the history.} It has been shown by Schleimer \citep{Sch11} that this problem lies in NP, and by Zentner \citep{Zen16} that this problem lies in co-NP provided the generalised Riemann Hypothesis. These results suggest that there might actually be a polynomial algorithm for Sphere Recognition.

\vspace{.3cm}

The Polynomial Sphere Recognition Problem is polynomial equivalent to the
following combinatorial version (this follows for example by combining \citep{JC2} and \citep{3space5}). Given a $2-$complex $C$ whose first homology group $H_1(\mathbb{Q})$ 
over the rationals is trivial, is
there a polynomial time algorithm that decides whether $C$ can be
embedded in $3$-space (that is, the $3-$sphere or equivalently the 3-dimensional euclidean space $\mathbb{R}^3$)?

\vspace{.3cm}

In this paper, we provide new constructions that demonstrate some of the difficulties of this embedding problem. 
A naive approach towards this embedding problem is the following. Let a
$2-$complex $C$ with $H_1(\mathbb{Q})=0$ be given.
\begin{enumerate}
 \item Find an edge $e$ of $C$ such that if $C$ is embeddable, then $C/e$ is embeddable.
(For example if $e$ is not a loop and $C$ is embeddable, then $C/e$ is
embeddable. If $C$ can be embedded in such a way that there is some edge
$e'$ that is embedded as a trivial knot, then there also is an edge 
$e$ such that $C/e$ is embeddable.)
\item  By induction get an embedding of the
smaller $2-$complex $C/e$. Then use the embedding of $C/e$ to construct an
embedding of $C$.
\end{enumerate}
We will show that this strategy cannot work. More precisely we prove
the following.

\begin{theorem}\label{Thm 1}
 There is a simply connected $2-$complex $C$ embeddable in $3-$space
such that every edge $e$ forms a nontrivial knot in any embedding of $C$
and $C/e$ is not embeddable.
\end{theorem}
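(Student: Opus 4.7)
The plan is to build $C$ explicitly and verify the claimed properties in turn. A useful preliminary observation is that the non-embeddability of $C/e$ will follow from the other conclusions almost for free: if $C/e$ embedded in $\mathbb{S}^3$, then by splitting the contracted vertex back into a very small loop we would obtain an embedding of $C$ in which $e$ is traced as an arbitrarily small (hence unknotted) curve, contradicting the fact that every edge forms a nontrivial knot in every embedding. So the real substance of the theorem is the rigidity statement that each edge of $C$ must be knotted in every embedding, and I would organise the argument around that.

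For the construction, I propose $C$ with a single vertex $v$, several loops $e_1,\dots,e_n$ based at $v$, and a collection of 2-cells whose boundary words simultaneously (a) trivialise the free group $\pi_1(C^{(1)}) = F_n$, so that $C$ is simply connected, and (b) impose, for each $i$, a nontrivial knot type on $e_i$ in any embedding. To exhibit at least one embedding in $\mathbb{S}^3$, I would fix a separating 2-sphere $S$, realise each loop $e_i$ concretely as a prescribed nontrivially knotted circle based at $v\in S$, and verify that the 2-cells can be drawn as embedded disks (possibly with ribbon or tube modifications through both balls bounded by $S$) that are pairwise disjoint and meet the 1-skeleton only along their boundary words. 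This gives embeddability and simple connectedness by direct inspection.

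The hardest step by far is rigidity: given an arbitrary embedding $\phi\colon C\hookrightarrow\mathbb{S}^3$, show that each $\phi(e_i)$ is a nontrivial knot. My plan is, given $\phi$, to remove a tubular neighbourhood $N(\phi(e_i))$ and analyse the complement together with the images of the 2-cells incident to $e_i$. Each such 2-cell, in $\phi$, provides an embedded disk in $\mathbb{S}^3$ whose boundary reads a prescribed word in the loops; combining these disks along the edges of $C$ one can produce a piecewise linear surface whose boundary is $\phi(e_i)$, and whose homological data in the complement of the other loops encodes a specific nontrivial element of the peripheral structure of $N(\phi(e_i))$. By choosing the boundary words of the 2-cells so that this element corresponds to a nontrivial longitude of a companion knot (detectable for instance by the Alexander polynomial or by the fundamental group of the complement), I obtain that $\phi(e_i)$ cannot be the unknot.

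The principal obstacle, and where the delicate balance of the paper lies, is choosing the 2-cell boundary words so that both (a) and (b) hold at once: the relations required to trivialise $F_n$ tend to create room to unknot loops, while the relations required to force knottedness of a loop tend to leave $\pi_1(C)$ nontrivial. Producing an explicit family of words that threads this needle, and then certifying rigidity by a concrete topological invariant of the complement, is expected to be the main technical contribution of the construction.
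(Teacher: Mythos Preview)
Your proposal contains a genuine gap in its very first move. You claim that non-embeddability of $C/e$ follows ``almost for free'' from the knottedness of $e$: if $C/e$ embedded, you would split the contracted vertex back into a small unknotted loop and obtain an embedding of $C$ with $e$ trivial. But $e$ is a \emph{loop} here (the complex has a single vertex), and contracting a loop does not identify two vertices; it deletes $e$ and rewrites the boundary words of all incident faces. There is no ``vertex to split back''. To uncontract, you would have to insert a new small circle for $e$ and then, for every face whose boundary in $C$ traverses $e$, modify the embedded disk so that its boundary now makes the required detours around that circle, all while keeping the disks pairwise disjoint and embedded. Whether this is possible is governed by whether the planar rotation system on the link graph of $C/e$ lifts through the internal vertex sum to a planar rotation system on the link graph of $C$, and there is no reason it should in general. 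So this shortcut does not work, and the paper indeed proves non-embeddability of $C/e$ by an entirely separate combinatorial argument: the spanning tree is built so that every fundamental cycle contains three consecutive collinear diagonal edges, and contracting such a cycle forces a $K_{3,3}$ minor in the link graph at the unique vertex, making it nonplanar.

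Beyond that gap, your construction and rigidity plan remain programmatic rather than a proof: you identify the tension between trivialising $\pi_1$ and forcing knot type, but do not resolve it. The paper sidesteps this tension completely by a different mechanism. It starts from a cuboid complex (automatically simply connected and embeddable), chooses a spanning tree $T'$ whose fundamental cycles each contain an overhand subpath, and sets $C = C'/T'$. Knottedness in \emph{every} embedding is obtained not by analysing knot complements face-by-face but by proving that $C'$ (and hence $C$) has a \emph{unique} embedding in $\mathbb{S}^3$ up to homeomorphism, via local $3$-connectedness of an auxiliary complex; once uniqueness is known, it suffices to check knottedness in the canonical embedding, which follows from a connected-sum-with-a-trefoil argument. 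This uniqueness-of-embedding route is both the key idea you are missing for rigidity and the reason the paper never needs to engineer boundary words with prescribed peripheral behaviour.
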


Our construction is quite flexible and actually can easily be modified to give an infinite set
of examples. It seems to us that the Polynomial Sphere Recognition
Problem should be difficult for constructions similar to ours. More
precisely, we offer the following open problem.

\begin{question}\label{Question 2}
Given a $2-$complex $C$ with $H_1(\mathbb{Q})=0$, is there a 
polynomial time
algorithm that decides whether there is an integer $n$ 
such that the $2-$complex $C$ can be 
obtained from  the $n \times n \times n$ cuboid complex $Z^3[n]$ by contracting a spanning tree and 
 deleting faces?
\end{question}

In a sense the 2-complexes constructed in this paper are even more obscure than embeddable 2-complexes that are contractible but not collapsible or shellable; see \citep{MR3639606} for constructions of such examples and further references in this direction. 

The remainder of this paper has the following structure. In Section \ref{sec2}
we give basic definitions and state one theorem and two lemmas, which together imply the main result, 
Theorem \ref{Thm 1}. In the following three sections, we prove these facts, one section for each. We
finish by mentioning some follow-up questions in Section \ref{sec6}.

\section{Precise statement of the results}\label{sec2}

We begin by giving a rough sketch of the construction of a $2-$complex satisfying \autoref{Thm 1}. For a $2-$complex $C$ we denote by $Sk_1(C)$ the $1-$skeleton of $C$.\par

We define the concept of \textit{cuboid graphs}. Let $n_1, n_2, n_3$ be nonnegative numbers. We 
define the sets 
\begin{equation}\label{Vc}
    V_c := \{(x,y,z)\in \mathbb Z^3\hspace{0.4em} |\hspace{0.4em} 0\leq x\leq n_1; 0\leq y\leq n_2; 
0\leq z\leq n_3\}
\end{equation}
and 
\begin{equation*}
    E_c := \{((x_1, y_1, z_1), (x_2, y_2, z_2))\hspace{0.4em}|\hspace{0.4em} |x_1-x_2| + |y_1-y_2| + 
|z_1-z_2| = 1\}.
\end{equation*}
We call the graph $G_c = (V_c, E_c)$ \textit{the cuboid graph of size $n_1\times n_2\times n_3$}.
We refer to the given embedding of the graph $G_c$ in $\mathbb R^3$ as the \textit{canonical 
embedding of the cuboid graph $G_c$}. We define \textit{the cuboid complex $C_c = (V_c, 
E_c, F_c)$ of size $n_1\times n_2\times n_3$} as the $2-$complex obtained from the cuboid graph of 
size $n_1\times n_2\times n_3$ with faces attached to every $4-$cycle. Again we refer to the 
embedding of the cuboid complex $C_c$ in $\mathbb R^3$ obtained from the canonical 
embedding of the cuboid graph $G_c$ by adding straight faces on each of its $4-$cycles as the 
\textit{canonical embedding of the cuboid complex $C_c$}, see \autoref{F1}. It induces a 
natural metric on $C_c$. This allows us in particular to refer to the vertices of $C_c$ by giving their 
cartesian coordinates in the canonical embedding of $C_c$. \par

Consider the cuboid complex $C$ of size $(2n+1)\times n\times n$ for some large $n$. We shall construct a tree $T'$ with 
edges contained in the faces of $C$ and vertices coinciding with the vertices of $C$. It will have the additional property that every fundamental cycle of the tree $T'$ seen 
as a spanning tree of the graph $T'\cup Sk_1(C)$ is knotted in a nontrivial way in every 
embedding of $C$ in $3-$space. We will use the edges of $T'$, which do not belong 
to the $1-$skeleton of $C$, to subdivide some of the faces of $C$. This will 
produce a simply connected $2-$complex $C'$. Then, by contraction of the spanning tree $T'$ of the $1-$skeleton of $C'$ we obtain the $2-$complex $C''$ with only one 
vertex and a number of loop edges. We shall show that the $2-$complex $C''$ has the following properties:

\begin{itemize}
    \item It is simply connected.
    \item It is embeddable in $3-$space in a unique way up to homeomorphism and in this embedding 
each of its edges is knotted in a nontrivial way.
    \item For every edge $e$ of $C''$, the $2-$complex $C''/e$ obtained by 
contraction of $e$ in $C''$ does not embed in $3-$space.
\end{itemize}

To formalise these ideas, we begin with a definition.

\begin{definition}\label{Def 1}
\normalfont
Let $C_1$ be a $2-$complex with an embedding $\iota_1$ in $3-$space. Let $T_1$ be a 
spanning tree of the $1-$skeleton of $C_1$. The tree $T_1$ is \textit{entangled with 
respect to $\iota_1$} if, for any edge $e_1$ of $C_1$ outside $T_1$, the fundamental cycle 
of $e_1$ is a nontrivial knot in the embedding $\iota_1$. Moreover, if $T_1$ is entangled with 
respect to every embedding of the $2-$complex $C_1$ in $3-$space, we say that $T_1$ is 
\textit{entangled}.
\end{definition}

Let $C = (V, E, F)$ be the cuboid complex of size $(2n+1)\times n\times n$ for $n$ 
at least 20. The last condition might seem artificial. It is a sufficient condition for the existence of a special type of path constructed later in
the proof of Lemma \ref{spine}. If it confuses the reader, one might consider $n$ large enough till 
the end of the paper. 

\begin{theorem}\label{Thm 2}
There exists a simply connected $2-$complex $C' = (V', E', F')$ with an entangled spanning 
tree $T'$ of the $1-$skeleton of $C'$ with the following properties:
\begin{itemize}
    \item $C'$ is constructed from $C$ by subdividing some of the faces of the 
$2-$complex $C$ of size four into two faces of size three. We refer to 
the edges participating in these subdivisions as \textit{diagonal edges}.
    \item $T'$ contains all diagonal edges. Moreover, every fundamental cycle of $T'$ in the $1-$skeleton of $C'$ contains three consecutive diagonal edges in the same 
direction (i.e. collinear in the embedding of $C'$ induced by the canonical embedding of 
$C$). 
\end{itemize}
\end{theorem}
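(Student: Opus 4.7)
The plan is to give an explicit construction of $C'$ and $T'$, verify the combinatorial claims (subdivision into triangles, spanning tree, three collinear diagonals on every fundamental cycle) by a direct counting/inductive argument, and reduce the entanglement assertion to the technical Lemma \ref{spine}, to be proved separately.

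First I would fix the long direction of length $2n+1$ as the ``spine'' direction and describe a specific pattern of diagonal edges. The idea is to choose the diagonals so that along many $1\times 1\times 3$ runs of $4$-faces aligned in the spine direction one places three collinear diagonals; in addition, one designates, in the transverse $y$- and $z$-directions, a ``comb'' of cuboid edges to round $T'$ out to a spanning tree. Concretely, the tree $T'$ will consist of all diagonals together with a carefully chosen set of cuboid edges forming tree-like combs attached to the ends of the triples. Because each non-diagonal edge of $C'$ is either (i) selected into $T'$ as part of a comb or (ii) left outside $T'$ sitting close to one of the triples, the unique path in $T'$ joining its endpoints is forced to traverse a triple of collinear diagonals.

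Next I would verify that $T'$ is indeed a spanning tree: this is a straightforward vertex/edge count ($|V'|-1$ edges) together with an acyclicity check by peeling the combs slab by slab along the spine direction. The property that every fundamental cycle contains three consecutive collinear diagonals is then immediate from the design, as is the fact that $C'$ is a subdivision of $C$. Simple connectivity of $C'$ is essentially free: $C$ itself is simply connected (it is homotopy equivalent to a wedge of $2$-spheres, one per unit cube), and subdividing $4$-faces into two triangles preserves homotopy type, so $C'$ remains simply connected.

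The real obstacle is proving the entanglement, i.e.\ that for every embedding $\iota$ of $C'$ in $\mathbb S^3$ and every non-tree edge $e$, the fundamental cycle of $e$ is a nontrivial knot in $\iota$. This is precisely where the structural property of three collinear diagonals is exploited: together with the surrounding $2$-faces inherited from the cuboid, they create a rigid spine-like subcomplex whose embedding in $\mathbb S^3$ is constrained enough to force the fundamental cycle to clasp the rest of $C'$ nontrivially. The hypothesis $n\geq 20$ is used here, as it provides enough room to produce the auxiliary path appearing in Lemma \ref{spine}. Granting that lemma, entanglement follows for every non-tree edge and the theorem is established; I therefore expect the bulk of the technical work to live in the proof of Lemma \ref{spine}, not in the construction itself.
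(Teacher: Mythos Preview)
Your proposal has a genuine gap in the entanglement argument, and it rests on a misreading of what Lemma~\ref{spine} provides.

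First, three consecutive collinear diagonal edges do \emph{not} force knotting. In the paper that property is a bookkeeping device used much later (Section~\ref{Section5}) to produce a nonplanar link graph after contracting a loop; it plays no role in showing that fundamental cycles are nontrivial knots. The knotting in the paper comes from a completely different feature of the construction: the spine $P$ is built so that each fundamental cycle of $T'$ contains one of two fixed \emph{overhand} subpaths $P_2$ or $P_4$ (facial paths which, when closed by a segment, give a trefoil). Lemma~\ref{spine} merely asserts that such a spine exists; it is a construction lemma, not the technical engine you are deferring to. With the spine in hand, Lemma~\ref{spanning_tree_extend} extends it to $T'$ so that every fundamental cycle passes through $P_2$ or $P_4$, and then Lemma~\ref{L 3.6} uses the connected-sum decomposition with a trefoil (Lemma~\ref{L 2.6}) to conclude nontriviality of each fundamental cycle \emph{in the canonical embedding}.

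Second, you are missing the step that upgrades ``entangled in the canonical embedding'' to ``entangled'' in the sense of Definition~\ref{Def 1} (every embedding). This is not automatic and is the content of Lemma~\ref{embedC'}: one shows $C'$ has a unique embedding in $\mathbb S^3$ up to homeomorphism, by enlarging $C$ to a locally $3$-connected simply connected complex $C^t$ and invoking the uniqueness theorem from \cite{JC1}. Only then does Corollary~\ref{cor 3.9} transfer nontriviality of the knots to arbitrary embeddings. Your sketch (``rigid spine-like subcomplex \ldots forces the fundamental cycle to clasp the rest of $C'$'') does not supply either the overhand-path mechanism or the uniqueness-of-embedding step, and without both the entanglement claim does not follow.

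A smaller point: in the paper $T'$ uses only two non-diagonal edges of $C$ (one in the starting segment, one in the ending segment); your ``comb of cuboid edges'' would be a different object, and you would then need a separate argument that every fundamental cycle still contains an overhand subpath, which your triples-plus-comb design does not obviously guarantee.
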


We remark that, indeed, adding the appropriate subdividing edges as line segments contained in the faces of 
the $2-$complex $C$ within its canonical embedding induces an embedding of $C'$. 
We call this induced embedding the \textit{canonical embedding of $C'$}.

Having a $2-$complex $C'$ and an entangled spanning tree $T'$ of the 
$1-$skeleton of $C'$ satisfying the conditions of \autoref{Thm 2} we construct our main 
object of interest as follows. Let the $2-$complex $C''$ be obtained after contraction of the tree $T'$ in $C'$ i.e. $C'' = C'/T'$.\par

We now make a couple of observations.

\begin{observation}\label{Ob1}
In an embedding $\iota$ of a $2-$complex $C$ in $3-$space every edge that is not a loop can be geometrically contracted within the embedding.
\end{observation}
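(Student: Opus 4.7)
The plan is to build a continuous surjection $q : \mathbb{R}^3 \to \mathbb{R}^3$ (or $\mathbb{S}^3 \to \mathbb{S}^3$) that collapses the arc $\iota(e)$ to a single point, is the identity outside a small neighborhood of $\iota(e)$, and restricts to a homeomorphism on the complement of $\iota(e)$. Composing $\iota$ with $q$ and factoring through the quotient map $C \to C/e$ will then give the desired embedding of $C/e$ in $3$-space.

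First, since $e$ is not a loop, its endpoints $u, v$ are distinct. I would choose a closed topological $3$-ball $B$ whose interior contains $\iota(e)$ and which is small enough that: (i) $B$ meets the image of the vertex set of $C$ only in $\{\iota(u),\iota(v)\}$; (ii) for every edge $e' \neq e$ of $C$, the intersection $B \cap \iota(e')$ is either empty or a short initial arc of $\iota(e')$ based at one of $\iota(u),\iota(v)$; and (iii) for every face $f$ of $C$, the intersection $B \cap \iota(f)$ is either empty or a closed topological disk having $\iota(e)$ as a boundary arc. Such a $B$ exists because $\iota$ is an embedding of a finite complex. Inside $B$ I would define $q$ as a continuous surjection $B \to B$ that fixes $\partial B$ pointwise, sends $\iota(e)$ to a single point $p$, and is a homeomorphism on $B \setminus \iota(e)$; outside $B$ I would take $q$ to be the identity. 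Such a $q$ exists because $\iota(e)$ is a tame arc in the $3$-ball $B$ with distinct endpoints, and hence a cellular subset, so the quotient $B/\iota(e)$ is homeomorphic to $B$ via a homeomorphism that is the identity on $\partial B$.

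The remaining task is to verify that $q \circ \iota$ descends to an embedding of $C/e$. Condition (iii) guarantees that every face $f$ incident to $e$ meets $B$ in a disk $D_f$ with $\iota(e)$ appearing as a boundary arc; collapsing $\iota(e)$ inside $D_f$ yields again a topological disk with $p$ as a boundary point, so the image of $f$ under $q \circ \iota$ remains an embedded disk. Faces, edges, and vertices of $C$ not incident to $e$ are untouched by $q$, and for two distinct faces incident to $e$ the images $q(D_f)$ meet only in the images of their shared boundary edges, since $q$ is the identity off $B$. The only delicate point on which the argument really hinges is the cellularity of a tame arc with distinct endpoints in a $3$-ball; this is also precisely where the non-loop hypothesis enters, because a loop with coinciding endpoints forms a (possibly knotted) circle, which need not be cellular and whose collapse can genuinely alter the ambient $3$-space.
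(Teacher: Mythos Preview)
Your argument is correct and follows essentially the same idea as the paper's proof: isolate $\iota(e)$ in a small neighbourhood (the paper uses a tubular neighbourhood $D_e$, you use a $3$-ball $B$) and collapse the tame arc there while fixing everything outside, your version simply supplying more detail than the paper's two-line sketch. The only minor imprecision is that cells incident to $u$ or $v$ but not to $e$ are not literally ``untouched by $q$'' as you write---they do meet $B$---but since $q$ is a homeomorphism off $\iota(e)$ and sends both endpoints to the same point $p$, their images remain embedded and meet exactly as required in $C/e$, so nothing in the argument is affected.
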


\begin{proof}
Let $e$ be an edge of $C$ that is not a loop. Consider a tubular neighbourhood $D_e$ of $\iota(e)$ in $\mathbb S^3$ such that $D_e\cap \iota$ is connected in $D_e$. Now we can contract $\iota(e)$ within $D_e$. This is equivalent to contracting $e$ within $\iota$ while keeping $\iota \cap (\mathbb S^3\backslash D_e)$ fixed.
\end{proof}

Thus the $2-$complex $C''$ will be embeddable in $3-$space. 

\begin{observation}\label{Ob2}
Contraction of any edge in a simply connected $2-$complex (even one forming a loop) produces a 
simply connected $2-$complex.
\end{observation}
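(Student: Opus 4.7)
Since $C$ is simply connected it is, in particular, path-connected, and path-connectedness is preserved by quotient maps; hence $C/e$ is path-connected as well, and it remains only to verify $\pi_1(C/e)=1$. I would split the argument into two cases according to whether $e$ is a loop.

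If $e$ is not a loop, its closure $\bar e$ (the edge together with its two distinct endpoints) is contractible and is a CW-subcomplex of $C$. By the standard fact that collapsing a contractible CW-subcomplex of a CW-complex is a homotopy equivalence, the quotient map $C \to C/e$ is a homotopy equivalence, and therefore $\pi_1(C/e)\cong\pi_1(C)=1$.

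The substantive case is when $e$ is a loop at some vertex $v$: now $\bar e$ is a circle, not a contractible space, so the naive ``collapse a contractible subcomplex'' argument fails. My plan is to factor the contraction through a $2$-cell attachment. Attach a $2$-disk $D$ to $C$ along the loop $\bar e$ to obtain an auxiliary $2$-complex $\widetilde C = C\cup_{\bar e} D$. By the standard effect of attaching a $2$-cell (equivalently, by Van Kampen's theorem), one has $\pi_1(\widetilde C)=\pi_1(C)/\langle\langle [e]\rangle\rangle$, which is trivial because $\pi_1(C)$ already is. Next, the subcomplex $D\subset\widetilde C$ is contractible, so the quotient map $\widetilde C\to\widetilde C/D$ is a homotopy equivalence; but collapsing $D$ to a point also collapses its boundary $\partial D=\bar e$ to the same point, and one checks that the resulting space is naturally identified with $C/e$. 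Combining, $\pi_1(C/e)\cong\pi_1(\widetilde C/D)\cong\pi_1(\widetilde C)=1$.

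The only subtle point is verifying that $\widetilde C/D$ and $C/e$ really coincide as $2$-complexes; this is a direct identification of the underlying spaces and cell structures, and I expect to spend a couple of lines on it rather than a real argument. As a fully combinatorial alternative, one can instead use the presentation of $\pi_1$ of a $2$-complex (generators $=$ edges outside a chosen spanning tree, relations $=$ boundaries of faces): contracting a loop edge $e$ then corresponds to imposing the single additional relation $[e]=1$, which only makes the group smaller, and an already trivial group remains trivial; contracting a non-loop edge corresponds to enlarging the spanning tree, which does not change the presented group at all.
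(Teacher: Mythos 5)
Your proof is correct, and it takes a more careful route than the paper does. The paper's own argument is a one-line assertion that ``contraction of edges in a $2$-complex does not modify its topology''; that statement is literally true only for non-loop edges (where, as you say, one collapses a contractible subcomplex and gets a homotopy equivalence), whereas collapsing a loop genuinely changes the homotopy type in general --- $C/e$ is the mapping cone of $\bar e \hookrightarrow C$, i.e.\ homotopy equivalent to $C$ with a $2$-cell attached along $e$, so $\pi_1(C/e)\cong\pi_1(C)/\langle\langle[e]\rangle\rangle$. The paper's conclusion survives only because a quotient of the trivial group is trivial, which is exactly the point your loop case makes explicit. So your case split, and in particular the factorisation of the loop contraction through the $2$-cell attachment $\widetilde C = C\cup_{\bar e}D$ followed by the collapse of the contractible disk $D$ (the standard identification $C/\bar e\simeq C\cup C\bar e$ for a CW pair), supplies the justification the paper elides. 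Your combinatorial alternative via edge-path-group presentations is an equally valid and fully elementary substitute. The only cosmetic caveat is that in this paper ``contraction'' is meant combinatorially (the quotient $2$-complex), which is exactly the quotient space you analyse, so no further reconciliation is needed.
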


\begin{proof}
Contraction of edges in a $2-$complex does not modify its topology. In particular, the property of being simply connected is preserved under edge contraction.
\end{proof}

Thus by construction the $2-$complex $C''$ will be simply connected.\\

The next lemma is proved in Section \ref{Section4}.

\begin{lemma}\label{sec4lemma}
Every embedding of the $2-$complex $C''$ in $3-$space is obtained from an embedding of the $2-$complex $C'$ by contracting the spanning tree $T'$.
\end{lemma}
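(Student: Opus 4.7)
Given an embedding $\psi: C'' \hookrightarrow \mathbb{S}^3$, the plan is to construct an embedding $\psi': C' \hookrightarrow \mathbb{S}^3$ which, upon contraction of $T'$, returns $\psi$. Since the only combinatorial difference between $C'$ and $C''$ occurs at the single vertex $v$ of $C''$ (where the entire tree $T'$ has been collapsed), the construction will leave $\psi$ unchanged away from a small neighbourhood of $\psi(v)$ and insert a realisation of $T'$ within that neighbourhood.

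More precisely, I would first pick a sufficiently small closed ball $B\subset \mathbb{S}^3$ about $\psi(v)$ so that $B\cap \psi(C'')$ is a cone on the link graph $L_v(C'')$ with apex $\psi(v)$, the link being realised as an embedded graph on the $2$-sphere $\partial B$. Outside $B$, I would set $\psi'=\psi$, so that the task reduces to filling in an embedded copy of $T'$ together with the initial segments of the cells of $C'$ incident to vertices of $T'$ inside $B$.

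The combinatorial input to this filling is that $L_v(C'')$ is precisely the graph obtained from the disjoint union of the links $L_w(C')$ (as $w$ ranges over the vertices of $T'$) by the natural identifications induced by contracting the edges of $T'$. Using this, I would pick any geometric tree $\widehat{T}\subset B$ isomorphic to $T'$, take a small regular neighbourhood $N$ of $\widehat{T}$ --- which is a $3$-ball since $\widehat{T}$ is a tree in the $3$-ball $B$ --- and identify $B\setminus N^\circ$ with the collar $\partial B\times [0,1]$. The embedded graph on $\partial B$ representing $L_v(C'')$ is then transported through this collar to a combinatorially isomorphic graph on $\partial N$ that decomposes into the sub-links $L_w(C')$ positioned near the respective vertices of $\widehat{T}$. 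The half-edges and half-faces of $C'$ incident to $v$ follow these arcs and disks in the collar, and the edges of $T'$ are realised inside $N$ by $\widehat{T}$.

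The main obstacle, as I see it, is checking that the sub-links can actually be arranged on $\partial N$ compatibly with their prescribed positions on $\partial B$ --- that is, that the combinatorial \emph{un-contraction} of $L_v(C'')$ can be realised geometrically by a spherical graph inside $B$. Since $T'$ is a tree and the identifications defining $L_v(C'')$ are controlled by finitely many edges of $T'$, the un-contraction can be realised locally by splitting the appropriate vertices of the graph on $\partial B$ along arcs that trace, through the collar, the desired edges of $\widehat{T}$. Once the link on $\partial N$ is in place, extending the face-disks across the collar $\partial B \times [0,1]$ and closing the remaining portions of the cells of $C'$ inside $N$ gives an embedding of the sub-complex of $C'$ lying in $B$. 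Together with $\psi|_{\mathbb{S}^3\setminus B^\circ}$, this produces the required embedding $\psi'$, and by construction contracting $T'$ in $\psi'$ recovers $\psi$.
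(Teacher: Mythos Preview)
Your approach is genuinely different from the paper's: you attempt a direct geometric construction of $\psi'$ inside a ball $B$ around $\psi(v)$, whereas the paper works combinatorially through planar rotation systems, lifting the rotation system of $C''$ to one of $C'$ (Corollary~\ref{main cor}), then invoking an existence theorem for embeddings of simply connected complexes with a prescribed planar rotation system, and finally Lemma~\ref{iso} to match the result with $\iota''$.

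The geometric strategy is reasonable, but there is a real gap at precisely the step you flag as the main obstacle. You assert that ``the un-contraction can be realised locally by splitting the appropriate vertices of the graph on $\partial B$ along arcs''. First, a point of precision: the link graph $L_v(C'')$ is not obtained from the $L_w(C')$ by \emph{identifying} vertices but by iterated \emph{vertex sums} along the edges of $T'$ --- for each edge $e=xy$ of $T'$ the vertices of $L_x(C')$ and $L_y(C')$ corresponding to $e$ are deleted and their neighbours are matched up by new edges. Undoing a single such vertex sum inside a fixed planar embedding amounts to finding a simple closed curve on $\partial B$ that separates the $L_x$--part from the $L_y$--part and meets the embedded graph exactly in the bridge edges. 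Such a curve exists in general only if, in the cyclic order on $\partial B$, the bridge edges are not interleaved with edges internal to one side; equivalently, only if the planar rotation system on the vertex sum restricts to planar rotation systems on $L_x(C')$ and $L_y(C')$. This is \emph{not} automatic and fails for arbitrary $2$--complexes.

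What makes it work here is that $C'$ is locally $2$--connected (Observation~\ref{2-conn}): since each $L_w(C')$ is $2$--connected, deleting the vertex corresponding to a tree edge leaves a connected graph, which forces the two sides of each vertex sum to occupy complementary discs in any planar embedding, so the separating curve exists. This is exactly the content of Lemma~\ref{L 4.3} (and inductively Corollary~\ref{main cor}) in the paper. Your argument as written does not invoke local $2$--connectedness and would, taken at face value, apply to contractions of spanning trees in arbitrary $2$--complexes, where the conclusion is false. To repair the proof you must use Observation~\ref{2-conn} at the splitting step; once you do, the remaining geometric filling-in (product structure on $B\setminus N^\circ$, extending face pieces through the collar, realising $T'$ inside $N$) is routine, and your construction becomes a legitimate alternative to the paper's rotation-system argument.
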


\begin{remark}
Notice that Observation \ref{Ob1} ensures that contraction of $T'$ can be done within any given embedding of $C'$ in $3-$space.
\end{remark}

The next lemma is proved in Section \ref{Section5}.

\begin{lemma}\label{sec5lemma}
For every edge $e''$ of $C''$ the $2-$complex $C''/e''$ does not embed in $3-$space. 
\end{lemma}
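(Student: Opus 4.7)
The strategy is a proof by contradiction. Assume that $C''/e''$ admits an embedding $\iota_0$ in $3$-space. The edge $e''$ of $C''$ corresponds to a unique edge $e'$ of $C'$ outside the spanning tree $T'$, whose fundamental cycle in $T'$ we denote by $\gamma$. The aim is to produce from $\iota_0$ a new embedding $\iota$ of $C''$ in $3$-space in which the loop $e''$ is an unknot. Once such $\iota$ is at hand, Lemma \ref{sec4lemma} lifts it to an embedding of $C'$ in $3$-space under which $\gamma$ is realised as the unknotted loop $e''$, directly contradicting the entanglement of $T'$.

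To build $\iota$ from $\iota_0$, let $v$ denote the unique vertex of $C''$ and pick a small open ball $B \subset \mathbb{S}^3$ centred at $\iota_0(v)$, chosen small enough that $\iota_0(C''/e'') \cap B$ is precisely the open cone from $\iota_0(v)$ over the link graph $L_v(C''/e'')$ embedded in $\partial B$. Outside $B$, set $\iota = \iota_0$. Inside $B$, place $e''$ as a short unknotted arc from $\iota_0(v)$ to itself. For each face $f''$ of $C''$ whose boundary contains $e''$, say between consecutive edges $e_a$ and $e_c$, extend the embedded disc $\iota_0(f)$ (where $f$ denotes the face of $C''/e''$ obtained from $f''$ by deleting $e''$) inside $B$ by attaching a thin rectangular finger that runs alongside $e''$. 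This replaces the corner $(e_a, e_c)$ of $f$ at $\iota_0(v)$ with the pair of corners $(e_a, e'')$ and $(e'', e_c)$ of $f''$, yielding an embedded disc bounded by the correct boundary cycle of $f''$ in $C''$.

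The main technical obstacle is verifying that all such fingers, one per face of $C''$ incident with $e''$, can be attached simultaneously inside $B$ without mutual intersection. The key structural input is that each edge of $C''$ occurs in each face boundary at most once: this follows because the faces of $C'$ are either triangles from the diagonal subdivisions or the original square faces of $C$, in each of which every edge appears exactly once. Consequently every incident face contributes exactly one thin finger along $e''$, and the three-dimensional room available inside $B$ allows these fingers to be arranged cyclically around $e''$ without overlapping, much like pages of an open book. This confirms that $\iota$ is a valid embedding of $C''$ in $3$-space in which $e''$ is unknotted, and the contradiction with the entanglement of $T'$ follows as outlined.
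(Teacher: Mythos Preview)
Your overall strategy is natural, and steps 3--5 (lifting via Lemma~\ref{sec4lemma} and contradicting entanglement) are fine. The gap is in step 2, the finger construction. You assert that the fingers ``can be arranged cyclically around $e''$ without overlapping, much like pages of an open book'', but this is exactly the nontrivial point and your justification does not establish it. Pages of an open book fan out freely from a common binding; here, however, each finger must \emph{also} be glued along the arc $a_i c_i$ on $\partial B$ where the disc $\iota_0(f_i)$ enters the ball. Those arcs are fixed by $\iota_0$ and may well lie in different chambers of $B$ once you remove the wedges coming from the faces that do \emph{not} contain $e''$. There is no reason the fingers can reach a common binding without crossing those wedges or one another. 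Phrased combinatorially: you are implicitly asserting that any planar rotation system on $C''/e''$ lifts (through the inverse of the internal-vertex-sum at the two link vertices for $e''$) to a planar rotation system on $C''$ agreeing at every edge other than $e''$. That is not a general fact about $2$-complexes, and the observation that each edge occurs at most once per face boundary controls only the \emph{number} of fingers per face, not whether their attaching arcs sit compatibly on $\partial B$. Since $C''$ has a unique embedding (Corollary~\ref{cor 4.5}) and in it $e''$ is knotted (Corollary~\ref{cor 4.6}), your hypothetical unknotted embedding cannot exist; the whole weight of the lemma therefore rests on the finger construction, which is precisely where the argument is incomplete.

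The paper proceeds quite differently: it never attempts to uncontract $e''$. Instead it shows directly that the link graph at the unique vertex of $C''/e''$ is nonplanar (Lemma~\ref{rem_lem}), which immediately blocks any embedding. The key structural input is the clause of Theorem~\ref{Thm 2} guaranteeing three consecutive \emph{collinear} diagonal edges $e_1,e_2,e_3$ on the fundamental cycle of $e'$. Contracting $e_2$ yields an explicit $14$-vertex link graph $G_{14}$; contracting the rest of the cycle identifies two of its vertices to give $G_{13}$, which contains $K_{3,3}$ as a minor. Nonplanarity then persists through the remaining contractions of $T'$ because the link graphs being merged in are $2$-connected (Lemma~\ref{L 5.2}). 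Your approach discards exactly this local collinearity information, which is what makes the paper's argument go through.
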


Admitting the results of Section \ref{sec2} we stated so far, we prove \autoref{Thm 1}.

\begin{proof}[Proof of \autoref{Thm 1} assuming \autoref{Thm 2}, Lemma \ref{sec4lemma} and Lemma \ref{sec5lemma}.]
We show that $C''$ satisfies \autoref{Thm 1}. Firstly, we have by Observation \ref{Ob2} that $C''$ is a simply connected $2-$complex. Secondly, by Observation \ref{Ob1} we have that $C''$ is embeddable in $3-$space, but by Lemma \ref{sec5lemma} for every edge $e''$ of $C''$ we have that $C''/e''$ is not embeddable in $3-$space. Finally, let $\iota''$ be an embedding of $C''$ in $3-$space and let $e''$ be an edge of $C''$. The edge $e''$ corresponds to an edge $e'$ of $C'$ not in $T'$. By Lemma \ref{sec4lemma} $\iota''$ originates from an embedding $\iota'$ of the $2-$complex $C'$. But by \autoref{Thm 2} we have that the tree $T'$ is entangled, so the fundamental cycle of $e'$ in the embedding of $T'$ induced by $\iota'$ forms a nontrivial knot. As contracting $T'$ within $\iota'$ preserves the knot type of its fundamental cycles, $\iota''(e'')$ is a nontrivial knot. Thus every edge of $C''$ forms a nontrivial knot in each of the embeddings of $C''$ in $3-$space, which finishes the proof of \autoref{Thm 1}.
\end{proof}

\vspace{.3cm}

We now turn to several important definitions.

\begin{definition}\label{Def 2.2}
\normalfont A 
\textit{connected sum} of two knots is an operation defined on their disjoint union as follows. See \autoref{Steps}
\begin{enumerate}
    \item Consider a planar projection of each knot and suppose these projections are disjoint.
    \item Find a rectangle in the plane where one pair of opposite sides are arcs along each knot, 
but is otherwise disjoint from the knots and so that the arcs of the knots on the sides of the 
rectangle are oriented around the boundary of the rectangle in the same direction.
    \item Now join the two knots together by deleting these arcs from the knots and adding the arcs 
that form the other pair of sides of the rectangle.
\end{enumerate}
\end{definition}

We remark that the definition of connected sum of two knots is independent of the choice of planar 
projection in the first step and of the choice of rectangle in the second step in the sense that the 
knot type of the resulting knot is uniquely defined. By abuse of language we will often call connected sum of the knots $K$ and $K'$ the knot obtained by performing the operation of connected sum on $K$ and $K'$. This new knot will be denoted $K\# K'$ in the sequel. \par
In the proof of Lemma \ref{L 3.6} we rely on the following well-known fact.\par

\begin{figure}
\centering
\includegraphics[scale=0.2]{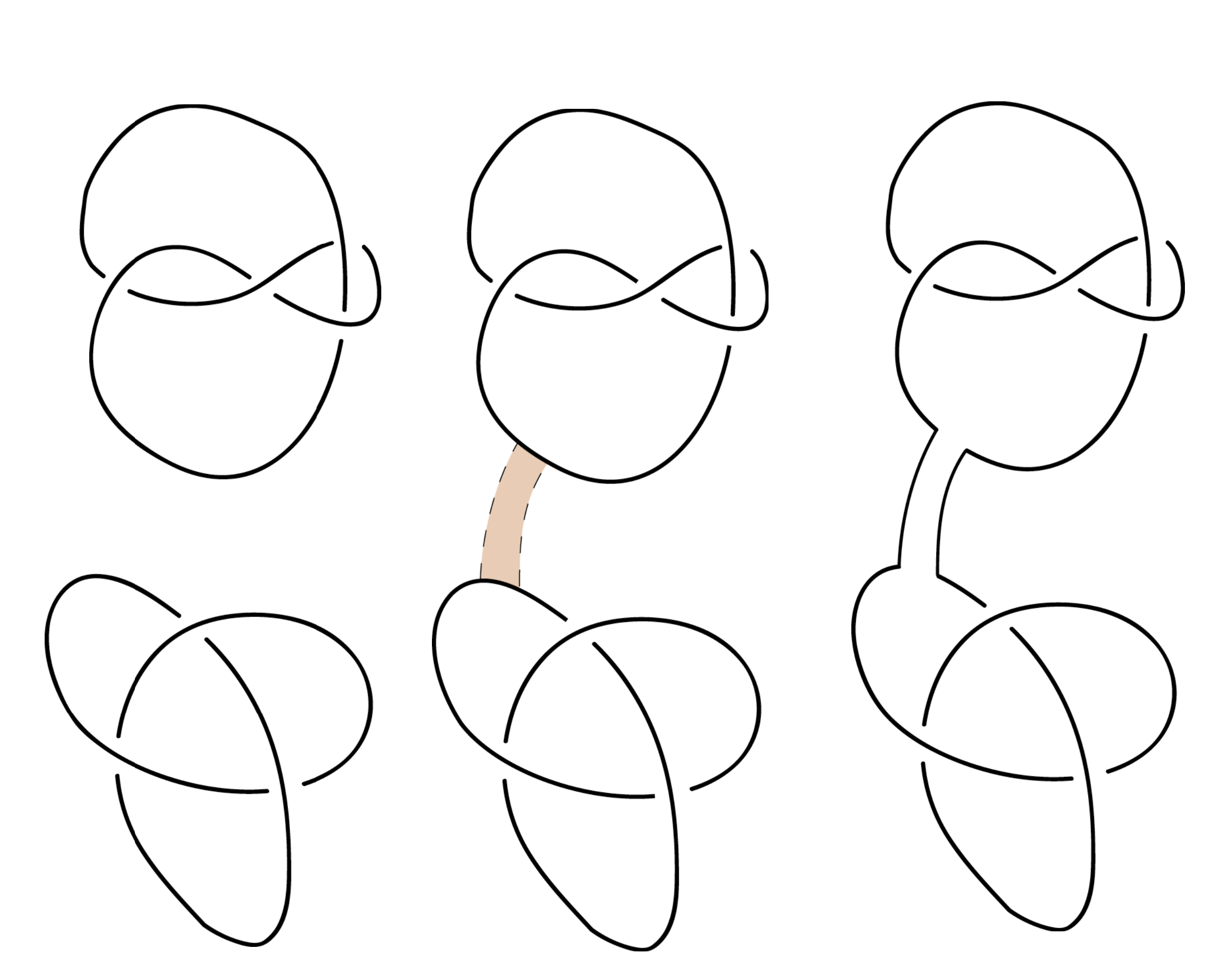}
\caption{The operation of connected sum between two disjoint knots. The figure illustrates the three 
steps in the definition. Source: Wikipedia.}
\label{Steps}
\end{figure}

\begin{lemma}\label{L 2.6}(\citep{ZH}, \citep{Kos})
A connected sum of two knots is trivial if and only if each of them is trivial.
\end{lemma}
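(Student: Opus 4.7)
The plan is to split the equivalence and handle the two directions separately, with the reverse direction being the substantive one. For the forward direction, I would observe that for any knot $K$, the connect-sum $K\# U$ with an unknot $U$ is isotopic to $K$: one can choose a decomposing $2$-sphere bounding a ball that meets $U$ in a trivial unknotted arc, so the connect-sum operation merely replaces an arc of $K$ by an ambient-isotopic arc. Consequently, if both summands are trivial, then so is $K_1\# K_2$.

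The reverse direction I would prove via additivity of the Seifert genus. For a knot $K$, let $g(K)$ denote the minimum genus of a compact connected orientable surface embedded in $\mathbb S^3$ whose boundary is $K$; such a Seifert surface exists for every knot, for instance by Seifert's algorithm applied to any planar diagram. A standard observation is that $g(K)=0$ if and only if $K$ bounds an embedded disk, which is equivalent to $K$ being trivial. Granting the identity
\[
g(K_1\# K_2)=g(K_1)+g(K_2),
\]
the lemma follows: if $K_1\# K_2$ is trivial, then $g(K_1)+g(K_2)=0$, hence $g(K_i)=0$ and each $K_i$ is trivial.

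It remains to establish additivity of genus. The inequality $g(K_1\# K_2)\le g(K_1)+g(K_2)$ is constructive: one takes minimum-genus Seifert surfaces $\Sigma_1,\Sigma_2$ for $K_1,K_2$ and joins them by a thin band running across the connect-sum sphere, producing a Seifert surface of genus $g(\Sigma_1)+g(\Sigma_2)$ for $K_1\# K_2$. The opposite inequality is the main obstacle and uses the classical sphere-intersection argument. I would choose a minimum-genus Seifert surface $\Sigma$ for $K_1\# K_2$ and a decomposing $2$-sphere $S$ realising the connected sum, arranged to be transverse to $\Sigma$. Then $\Sigma\cap S$ consists of disjoint simple closed curves together with a single arc joining the two points of $S\cap(K_1\# K_2)$. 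Using an innermost-disk argument on $S$ together with the fact that any inessential circle on a minimum-genus Seifert surface can be surgered away without losing genus, I would eliminate the circles one by one. Once $\Sigma\cap S$ reduces to the single arc, $S$ cuts $\Sigma$ along this arc into two Seifert surfaces $\Sigma_1',\Sigma_2'$ for $K_1$ and $K_2$, giving $g(\Sigma)=g(\Sigma_1')+g(\Sigma_2')\ge g(K_1)+g(K_2)$.

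The hard part is precisely this surgery step: checking that the intersection circles can be removed without increasing the genus of $\Sigma$, and that the pieces obtained after finally cutting along the arc are connected genuine Seifert surfaces for the two summands. This is the standard Schubert--Haken argument, worked out in full in the references \citep{ZH} and \citep{Kos}, and in practice I would cite these sources for the technical details rather than reproducing them.
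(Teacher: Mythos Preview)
Your argument is correct: the forward direction is immediate from $K\# U\simeq K$, and the reverse direction follows from additivity of the Seifert genus, whose proof you outline via the standard innermost-circle surgery on a decomposing sphere (Schubert's argument). The one point you pass over lightly is the case where an intersection circle is essential on $\Sigma$: surgery along the disk in $S$ then either lowers the genus or disconnects $\Sigma$ with one closed component, and in either case contradicts minimality of $\Sigma$; but you explicitly defer such details to the cited references, which is reasonable.

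As for comparison with the paper: the paper does not prove this lemma at all. It is stated with the citations \citep{ZH} and \citep{Kos} and used as a black box in the proof of Lemma~\ref{L 3.6}. Your sketch is precisely the classical proof found in those sources, so you are simply filling in what the paper leaves to citation rather than taking a different route.
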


Let $\phi: \mathbb S^1 \longrightarrow \mathbb S^3$ be an embedding of the unit circle in $3-$space. 
The knot $\phi(\mathbb S^1)$ is \textit{tame} if there exists an extension of $\phi$ to an embedding 
of the solid torus $\mathbb S^1 \times D^2$ into the $3-$sphere. Here, $D^2$ is the closed unit 
disk. We call the image of this extension into the $3-$sphere \textit{thickening} of the knot. We 
remark that the image of a tame knot by a homeomorphism of the $3-$sphere is again a tame knot. In 
this paper we consider only piecewise linear knots, which are tame.\par

The following definitions can be found in \citep{JC1}. The graph $G$ is \textit{$k-$connected} if it has at least $k+1$ vertices and
for every set of $k-1$ vertices $\{v_1, v_2, \dots v_{k-1}\}$ of $G$, the graph obtained from $G$ by 
deleting the vertices $v_1, v_2, \dots v_{k-1}$ is connected. A \textit{rotation system of the graph 
$G$} is a family $(\sigma_v)_{v\in V(G)}$, where $\sigma_v$ is a cyclic orientation of the edges 
incident with the vertex $v$ in $G$. For the rotation system $(\sigma_v)_{v\in V(G)}$ of the graph 
$G$ and for every vertex $v$ in $G$ we call $\sigma_v$ the \textit{rotator} of $v$. A rotation 
system of a graph $G$ is \textit{planar} if it induces a planar embedding of $G$.\par

Let $G' = (V', E')$ and $G'' = (V'', E'')$ be two disjoint graphs. Let $v'$ and $v''$ be vertices of 
equal degrees in $G'$ and $G''$ with neighbours $(u'_1, \dots, u'_k)$ and $(u''_1, \dots, u''_k)$ 
respectively. We define a bijection $\varphi$ between $(u'_1, \dots, u'_k)$ and $(u''_1, \dots, 
u''_k)$ by 
\begin{equation*}
\forall i\leq k,\hspace{0.4em} \varphi(u'_i) = u''_i.
\end{equation*} 
The \textit{vertex sum of $G'$ and $G''$ at $v'$ and $v''$ over $\varphi$} is a graph $G$ obtained from 
the disjoint union of $G'$ and $G''$ by deleting $v'$ from $G'$ and $v''$ from $G''$ and adding the 
edges $(u'_i, u''_i)_{1\leq i\leq k}$. We sometimes abuse the term vertex sum to refer to the 
operation itself. We say that an edge $e'$ of the graph $G'$ is \textit{inherited} by the vertex sum $G$ from the graph $G'$ if its two endvertices are both different from $v'$. A vertex $v'$ of the graph $G'$ is \textit{inherited} by the vertex sum $G$ from the graph $G'$ if it is different from $v'$. Thus $e'$ (respectively $v'$) can be viewed both as an edge (respectively vertex) of $G$ and as an edge (respectively vertex) of $G'$. See \autoref{VertexSum}.\par 
Moreover, consider graphs $G'$ and $G''$ with rotation systems $(\sigma_u')_{u'\in V'}$ and 
$(\sigma''_{u''})_{u''\in V''}$ and vertices $v'$ in $G'$ and $v''$ in $G''$ with rotators 
$\sigma_{v'} = (u'_1, \dots, u'_k)$ and $\sigma_{v''} = (u''_1, \dots, u''_k)$ respectively. There 
is a bijection $\phi$ between the rotators of $v'$ and $v''$ defined up to the choice of a vertex 
from $(u''_i)_{1\leq i\leq k}$ for $\phi(u'_1)$. Once this $u''_j$ is determined, we 
construct the edges $(u'_iu''_{(i+j-1 \mod k)})_{1\leq i\leq k}$. This gives the vertex sum of $G'$ 
and $G''$ at $v'$ and $v''$ over $\phi$.

\begin{figure}
\centering
\includegraphics[scale=0.7]{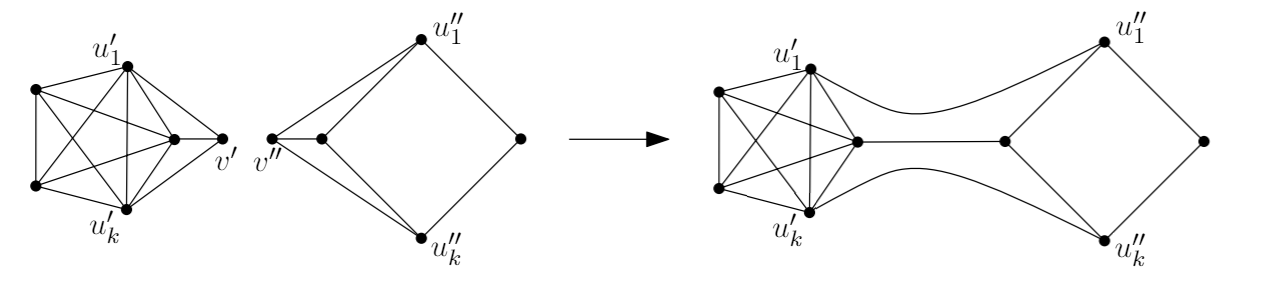}
\caption{Vertex sum of $G'$ and $G''$ at $v'$ and $v''$ respectively. The edge $u'_1u'_k$ is inherited by the vertex sum from $G'$.}
\label{VertexSum}
\end{figure}

We now give a couple of definitions for $2-$complexes. Let $C _1 = (V_1, E_1, F_1)$ be a 
$2-$complex and let $v$ be a vertex in $C_1$. The \textit{link graph $L_v(C _1)$ 
at $v$ in $C_1$} is the incidence graph between edges and faces incident with $v$ in 
$C_1$. See \autoref{Link}. A \textit{rotation system of the $2-$complex $C _1$} is 
a family $(\sigma_e)_{e\in E_1}$, where $\sigma_e$ is a cyclic orientation of the faces incident 
with the edge $e$ of $C_1$. A rotation system of a $2-$complex $C_1$ induces a rotation 
system on each of its link graphs $L_v(C _1)$ by restriction to the edges incident with the 
vertex $v$. A rotation system of a $2-$complex is \textit{planar} if all of the induced rotation systems
on the link graphs at the vertices of $C_1$ are planar.\par

\begin{figure}
\centering
\includegraphics[scale=0.7]{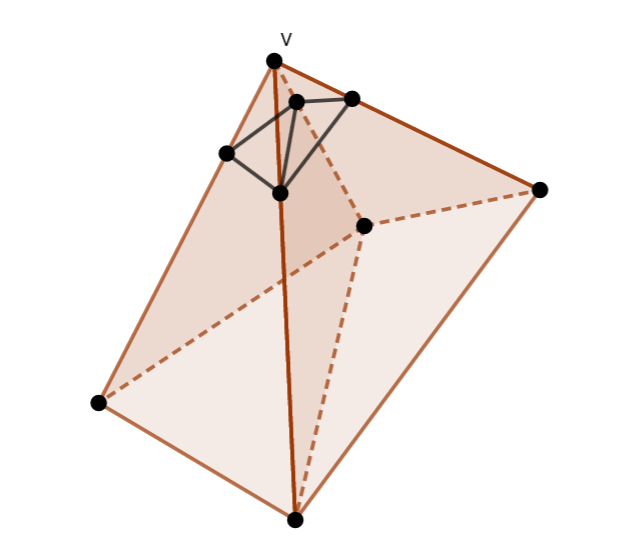}
\caption{The link graph at the vertex $v$ is given in black.}
\label{Link}
\end{figure}

\section{Proof of \autoref{Thm 2}}

In this section we work with the cuboid complex $C$ of size $(2n+1)\times n\times n$ for $n\geq 20$. From this point up to Lemma \ref{L 3.6} included we suppress the map from the cuboid complex $C$ to its canonical embedding from the notation. We define the subcomplex $C_{[a,b]}$ of $C$ as the intersection of $C$ with the strip $\{a\leq x\leq b\}\subset \mathbb R^3$. If $a=b$, we write $C_{[a]}$ instead of $C_{[a,a]}$. \par
As the
$1-$skeleton of $C$ is a connected bipartite graph, it has a unique proper vertex $2-$colouring in black 
and white (up to exchanging the two colours). This colouring is depicted in \autoref{F1}. We fix this colouring.\\

\textbf{Sketch of a construction of $T'$ and $C'$ from Theorem \ref{Thm 2}.}
We define an \textit{overhand path} as a piecewise linear path in $3-$space such that by connecting its endvertices via a line segment we obtain a copy of the trefoil knot. We construct a path called \textit{spine} contained in the faces of the $2-$complex $C$ that consists roughly of two consecutive overhand paths. See \autoref{Kn}.\par
The spine contains two of the edges of $C$ serving as transitions between vertices of 
different colours and all remaining edges in the spine are diagonal edges (these diagonal edges are not actually edges of $C$ but straight-line segments subdividing of face of $C$ of size four into two triangles. The endvertices of a diagonal edge always have the same colour.). More precisely, the spine starts 
with a short path $P_1$ that we later call \textit{starting segment} of three white and two black 
vertices. We call its last black vertex $A$. See \autoref{F2}. The vertex $A$ also serves as a starting vertex of the first overhand path $P_2$, which is entirely contained in the subcomplex $C_{[n+2, 2n+1]}$ and uses only diagonal edges. The ending vertex $B$ of $P_2$ is connected via 
a path $P_3$ of three diagonal edges in the same direction to a black vertex $A'$. The vertex $A'$ serves as a starting vertex of the second overhand path $P_4$. The path $P_4$ uses only diagonal edges and is
entirely contained in the subcomplex $C_{[0, n-1]}$ of $C$. Finally, the 
ending vertex $B'$ of $P_4$ is the beginning of a short path $P_5$ of two black and 
three white vertices. We later call $P_5$ \textit{ending segment}. Visually $P_5$ is obtained from the starting segment $P_1$ by performing central symmetry. The spine is obtained by joining together the paths $P_1, P_2, 
P_3, P_4$ and $P_5$ in this order. See \autoref{Kn}. Moreover, we construct the spine $P$ so that no 
two non-consecutive vertices in $P$ are at distance 1 for the euclidean metric of $\mathbb R^3$.\par

Recall that diagonal edges in $C$ subdivide faces of size four of $C$ into two 
faces of size three. By adding further diagonal edges, we extend the spine $P$ to a tree
$T'$, whose vertex set is the set of vertices of $C$. Thus the tree $T'$ is a spanning tree of the graph $Sk_1(C)\cup T'$ obtained from the 1-skeleton of $C$ by adding the edges of $T'$. We will show that we can choose the diagonal edges in the  previous step so that for any edge $e$ of $C$ and not in $T'$, the fundamental cycle of $e$ in $T'$ contains either the path $P_2$ from $A$ 
to $B$ or the path $P_4$ from $A'$ to $B'$. Both these paths have the structure of an overhand path. 

Finally, we obtain the $2-$complex $C'$ from $C$ by subdividing the faces of $C$ that contain diagonal edges of $T'$ by those diagonal edges. This $2-$complex $C'$ is simply connected. This completes the informal description of the construction of $C'$ and $T'$.\\

\begin{figure}
\centering
\includegraphics[scale=0.9]{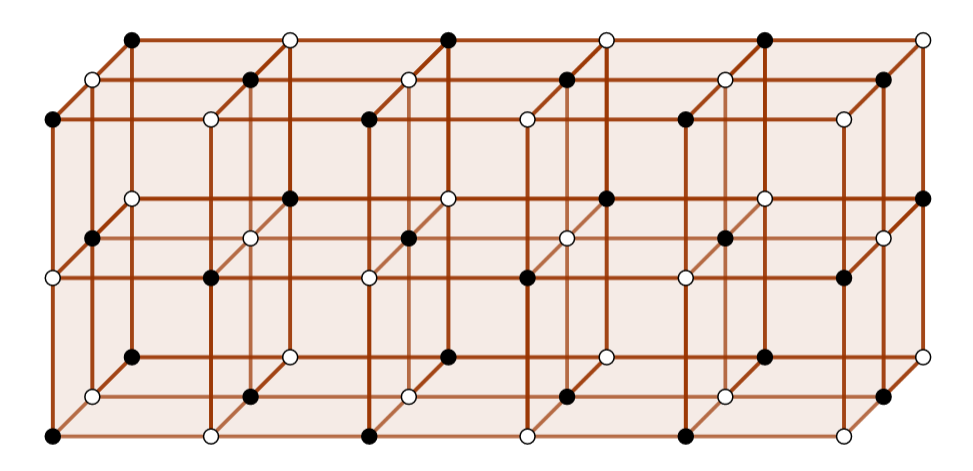}
\caption{The canonical embedding of the cuboid complex of size $5\times 2\times 2$ together with the proper $2-$colouring of its vertices.}
\label{F1}
\end{figure}

{\bf Formal construction of $T'$ and $C'$.} 
We do it in three steps.\\

We call a piecewise linear path contained in $C$ \textit{facial 
path} if:
\begin{itemize}
    \item It does not meet the edges of $C$ in points different from their endvertices.
    \item It does not contain any vertex of $V(C)$ more than once.
    \item Every pair of consecutive vertices of $C$ with respect to the order induced by 
the path are not neighbours in the $1-$skeleton of $C$.
    \item The parts of the path between two consecutive vertices of $C$ are embedded as single line segments contained in single faces of $C$.
\end{itemize} 
Informally a facial path is a path of diagonal edges without repetition of vertices. See 
\autoref{FH}. We remark that the diagonal edges are not edges of $C$.\par 
We recall that an \textit{overhand path} is a piecewise linear path in $3-$space such 
that after joining its endvertices by a line segment we obtain a copy of the trefoil knot.\par

The next definition is technical. Informally, \lq doubly knotted paths\rq \ look like the black subpath between 
$A$ and $B'$ in \autoref{Kn} up to rescaling.
A facial path $P$ in $C$ is a \textit{doubly knotted} if there exists vertices $A$, $B$, $A'$ and $B'$ appearing in that order on the facial path satisfying all of the following.
\begin{enumerate}
    \item the subpaths $APB$ and $A'PB'$ are disjoint and have each the structure of overhand paths;
    \item each of the subpaths $APB$ and $A'PB'$ contains three consecutive diagonal edges in the same direction
i.e. collinear in (the canonical embedding of) $C$;
\item the intersection 
of a facial path with the half-space $n+2 \leq x$ is exactly the subpath $APB$;
\item the 
intersection of a facial path with the half-space $x\leq n-1$ is exactly its subpath $A'PB'$;
\item the intersection of a facial path with the strip $n-1< x < n+2$ is exactly the subpath $BPA'$ (this time without the endvertices $A'$ and $B$ themselves). 
\end{enumerate}

A \textit{starting segment} is a piecewise linear path made of three diagonal edges and one edge of $C$ joining vertices with coordinates $((x,y,z), (x+1,y+1,z), (x+1,y,z+1), 
(x+2,y,z+1), (x+3,y+1,z+1))$ in this order. We call the vertex $(x,y,z)$ \textit{starting vertex} of the starting segment. We remark that every starting segment is characterised by its starting vertex. See \autoref{F2}. Likewise, an \textit{ending segment} is a piecewise linear path made of three diagonal edges and one edge of $C$ joining vertices 
with coordinates $((x,y,z), (x+1,y+1,z), (x+2,y+1,z), (x+2,y,z+1), (x+3,y+1,z+1))$. Again we call the vertex $(x,y,z)$, which indeed charachterises the ending segment, \textit{starting vertex} of the ending segment.\par
We remark that starting segments, ending segments and doubly knotted paths are not defined up to rotation but actually as explicit sets of vertices and edges (either diagonal edges or edges of $C$). Hence their concatenation is only possible in a unique way. This allows us to define a \textit{spine} as a path constructed by concatenating consecutively a starting segment, a doubly knotted path and an ending segment in this 
order. Spines have roughly the form of the path given in \autoref{Kn}. We call the doubly knotted path 
participating in a spine \textit{basis} of this spine.

\begin{lemma}\label{spine}
There exists a spine.
\end{lemma}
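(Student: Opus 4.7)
The strategy is to give an entirely explicit construction of the spine by building the doubly knotted basis first and then attaching a starting and an ending segment at its two endpoints. Since the basis must intersect the half-space $n+2\leq x$ in one overhand subpath $APB$ and the half-space $x\leq n-1$ in a second overhand subpath $A'PB'$, and these two half-spaces are disjoint slabs of width $n\geq 20$ and $n\geq 20$ respectively, my main task is to construct one overhand path in each slab using diagonal edges and then join them across the thin middle strip $n-1<x<n+2$ by a short sequence of diagonal edges.

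First I would construct an overhand path inside the slab $C_{[n+2,2n+1]}$. A diagonal edge subdivides a square face of $C$ and therefore only connects two vertices of the same colour that differ by $\pm 1$ in two of the three coordinates. So I aim to list a sequence of same-colour vertices, each pair consecutive on the list sharing a common $2$-face of $C$, whose polygonal realisation in the canonical embedding is a piecewise linear copy of (half of) a trefoil — more concretely, I would write down a sequence based on the standard trefoil diagram that winds three times around a tubular neighbourhood of a line and then closes up. The condition $n\geq 20$ is there precisely to guarantee enough room: the trefoil subpath needs a few coordinate slots in each of the three axes for the overpasses/underpasses and, in addition, three consecutive diagonal edges in the same direction (collinear segments) somewhere along the path. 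I would place these three collinear diagonal edges along one of the axis-aligned runs of the trefoil, which is easy because each such run can be made arbitrarily long by enlarging $n$.

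After building an analogous overhand path in the slab $C_{[0,n-1]}$ (by symmetry, essentially the mirror image), I would connect the terminal vertex $B$ of the first overhand path to the initial vertex $A'$ of the second via a short chain of diagonal edges all lying in the strip $n-1<x<n+2$; there is enough freedom in the $y$ and $z$ coordinates to choose the middle vertices so that the path stays in faces of $C$ and avoids revisiting any vertex. Finally I would attach at the starting vertex (opposite $A$) a starting segment and at the terminal vertex (opposite $B'$) an ending segment; these have fixed shapes and only depend on the choice of their starting vertex, so I only need to check that their extra vertices lie inside $C$ and are fresh, which is immediate after placing the overhand paths suitably away from the faces of $C$.

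The main obstacle is not the topology — once the explicit coordinate list is written, checking that the $APB$ and $A'PB'$ subpaths are overhand paths is a planar projection argument that can be read off the diagram — but rather the bookkeeping required to certify simultaneously that the resulting path is facial (no two consecutive vertices are neighbours in $Sk_1(C)$, each diagonal edge lies in a single face, no vertex is visited twice) and that it also satisfies the extra separation required in the sketch (no two non-consecutive vertices at euclidean distance $1$, needed later for the extension to $T'$). I expect to spend most of the proof verifying these combinatorial conditions on the explicit vertex list, and the bound $n\geq 20$ is chosen exactly so that each of the three local features — the two trefoil arcs with their three collinear diagonal edges, and the middle connecting chain — fits without conflict.
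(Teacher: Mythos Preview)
Your proposal is correct and follows essentially the same approach as the paper: an explicit coordinate-by-coordinate construction of two overhand facial paths, one in each slab $C_{[n+2,2n+1]}$ and $C_{[0,n-1]}$, joined by a short diagonal chain across the middle strip, with a starting and ending segment appended at the extremities. The paper carries this out with specific vertex lists (given partly in figures and footnotes), places the three collinear diagonal edges along the initial axis-aligned runs of each trefoil arc, and uses the bound $n\geq 20$ for exactly the room you anticipate; your outline matches this in structure and in its identification of the bookkeeping as the only real work.
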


\begin{proof}[Proof of Lemma \ref{spine}.]
We construct a spine $P$ as a concatenation of five shorter paths. A rough sketch illustrating the construction could be found in 
\autoref{Kn}. Recall that $C$ is of size $(2n+1)\times n\times n$ for $n\geq 20$.\par 
Let us colour the vertex with coordinates $(n-1, 0, 0)$ in white. This uniquely defines the (proper) $2-$colouring of the vertices of $C$ in black and white. The construction of the spine begins with a starting segment $P_1$ with starting vertex $(n-1, 0, 0)$. We denote by $O$ the vertex with coordinates $(n, 0, 1)$ (which is white as it has even distance from the vertex $(n-1, 0, 0)$) and by $A$
the vertex with coordinates $(n+2, 1, 1)$ (which is black). See \autoref{F2}.

\begin{figure}
\centering
\includegraphics[scale=0.55]{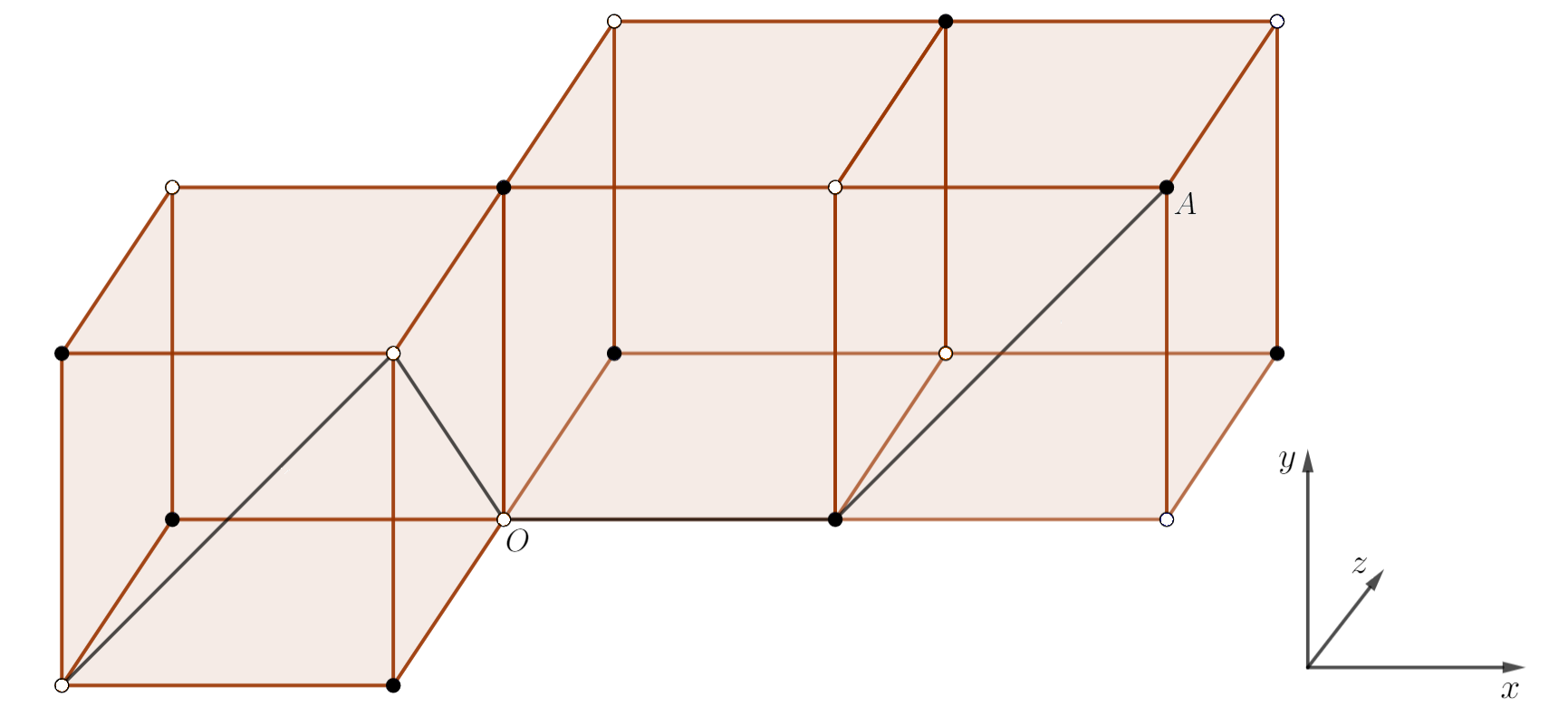}
\caption{The starting segment $P_1$ is given by concatenating the four edges coloured in black (these are three diagonal edges and one edge of $C$).}
\label{F2}
\end{figure}

\begin{figure}
\centering
\includegraphics[scale=0.50]{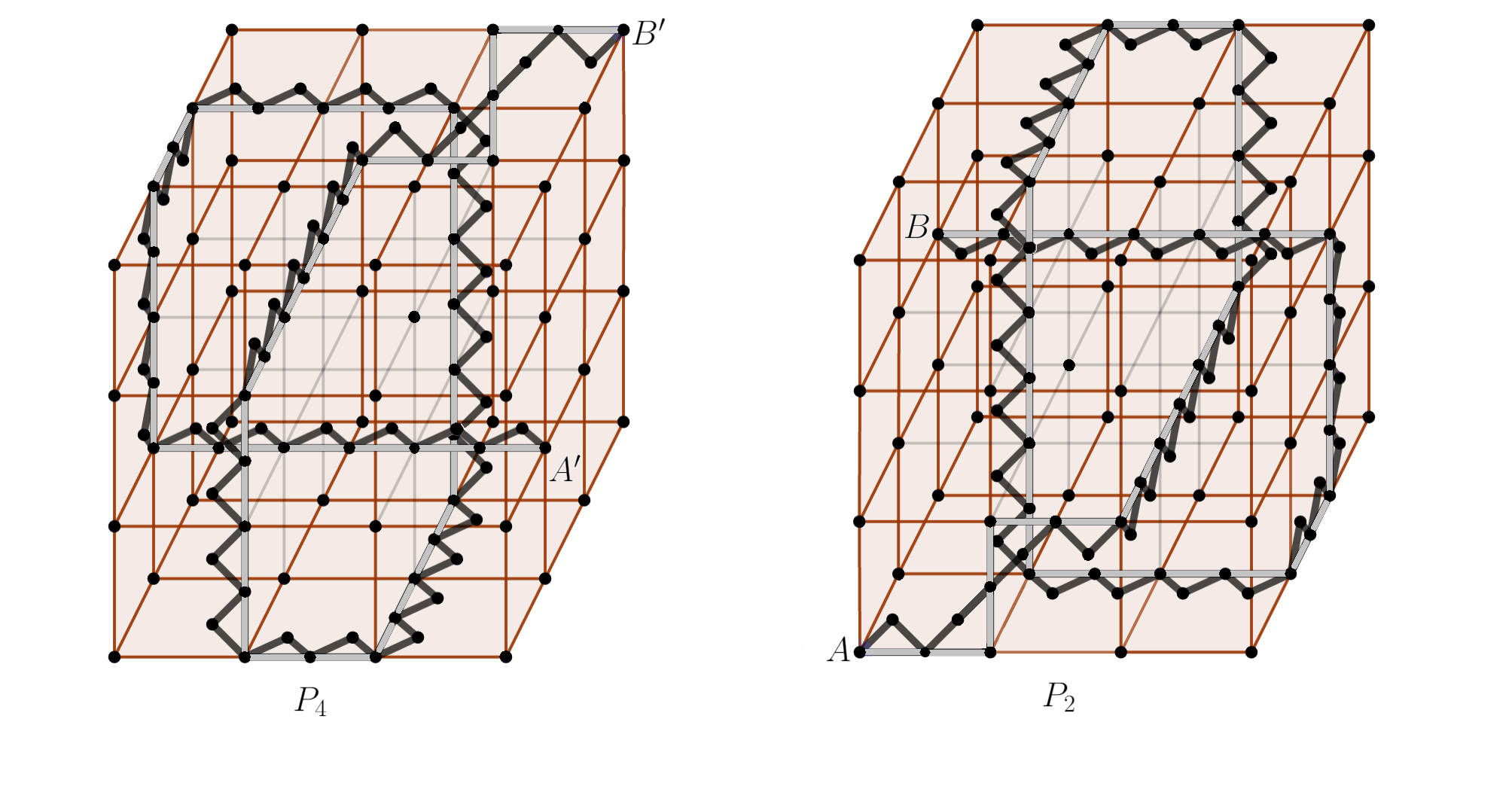}
\caption{The subpath $P_4$ of the spine $P$ on the left and and the subpath $P_2$ on the right. Both $P_2$ and $P_4$ are overhand facial paths contained in two cuboid subcomplexes of $C$ of size $12\times 12\times 12$. Only a few vertices necessary for the construction of the paths are depicted.}
\label{FH}
\end{figure}

Next, denote by $B$ the vertex with coordinates $(n+2, 9,9)$. We build an overhand facial path $P_2$ of black vertices of abscissas (i.e. first coordinates) at least $n+3$ except its first vertex $A$ 
and its last vertex $B$, which have abscissas exactly $n+2$. We define $P_2$ to be the facial path given in the right part of \autoref{FH}, which is embedded in the faces of the cuboid subcomplex of $C$ of size $12\times 12\times 12$ with $A$ being its closest vertex to the origin\footnote{Formally the path $P_2$ is given by the fact that it is a facial path approximating (i.e. staying at distance at most 1 from) the following piecewise linear path contained in the $1-$skeleton of $C$:
\begin{align*}
    A = & (n+2, 1, 1), (n+6, 1, 1), (n+6, 5, 1), (n+10, 5, 1), (n+10, 5, 13), (n+10, 13, 13), (n+6, 13, 13), (n+6, 13, 5),\\ & (n+6, 1, 5), (n+14, 1, 5), (n+14, 1, 9), (n+14, 9, 9), (n+2, 9, 9) = B.
\end{align*} Although such approximating facial path is not unique, any choice of such path is adapted for our purposes. In this proof, one particular choice of $P_2$ is made for concreteness.}.
We remark that in the figure only vertices 
important for the construction of the path are depicted.\par 

Denote by $A'$ the vertex with coordinates $(n-1, 9, 6)$. We construct a facial path $P_3$ consisting of three diagonal edges in the same direction connecting the black vertex $B$ to the black vertex $A'$.\par

Next, let $B'$ be the vertex with coordinates $(n-1, 17, 14)$. We build an overhand facial path $P_4$ of black vertices
of abscissas at most $n-2$ except the first vertex $A'$ 
and the last vertex $B'$, which have abscissas exactly $n-1$. We define $P_4$ to be the facial path given in the left part of \autoref{FH}, which is embedded in the faces of the cuboid subcomplex of $C$ of size $12\times 12\times 12$ with $B'$ being its farthest vertex to the origin\footnote{Like in the case of $P_2$, the facial path $P_4$ is formally given by an approximation of (i.e. path staying at distance at most 1 from) the following piecewise linear path contained in the $1-$skeleton of $C$:
\begin{align*}
    A' = & (n-1, 9, 6), (n-13, 9, 6), (n-13, 17, 6), (n-13, 17, 10), (n-5, 17, 10), (n-5, 5, 10), (n-5, 5, 2), (n-9, 5, 2),\\ & (n-9, 13, 2), (n-9, 13, 14), (n-5, 13, 14), (n-5, 17, 14), (n-1, 17, 14) = B'.
\end{align*} Again, despite the fact that any such approximating facial path is adapted for our purposes, in this proof we stick to a particular choice of $P_4$.}. Once again only vertices important for the construction of the path are depicted.\par 
We call $P_{[2,4]}$ the facial path between $A$ and $B'$ constructed by concatenating $P_2, P_3$ and 
$P_4$ in this order. It is doubly knotted by construction and will serve as basis of the spine $P$.\par 

Next, construct an ending segment $P_5$ with starting vertex $B'$. This is possible as $n\geq 20$. Let $O'$ be the first white vertex in $P_5$ with coordinates $(n + 1, 18, 14)$. Visually $P_5$ is obtained after central symmetry of \autoref{F2}.\par 

The spine $P$ is finally obtained by concatenating the starting segment $P_1$, the doubly knotted path $P_{[2,4]}$ and the ending segment $P_5$ in this order.
\end{proof}

We introduce the context of our next lemma. Fix three positive integers $x_1,y_1,z_1$ and let 
$C_1 = (V_1, E_1, F_1)$ be the cuboid complex of size $x_1\times y_1\times z_1$. Its 
$1-$skeleton is a connected bipartite graph so it admits a unique $2-$colouring up to exchanging the 
two colours. We fix this colouring in black and white, where vertex $(0, 0, 0)$ is white for concreteness, see \autoref{F1}. Moreover, from now up to the end of Observation \ref{ob 3.3} we suppress the map from the cuboid complex $C_1$ to its canonical embedding from the notation just like we did with the cuboid complex $C$.\par 

Let $G_b = (V_{1,b}, E(G_b))$ be a forest, where $V_{1,b}$ is the set of black vertices of $C_1$ and $E(G_b)$ is a subset of the set $E_{1,b}$ of diagonal edges with two black endvertices in 
$C_1$. Likewise let $V_{1,w}$ be the set of white vertices of $C_1$ and $E_{1,w}$ be the set of 
diagonal edges with two white endvertices in $C_1$. Finally, let $I_1\subset E_{1,w}$ be the 
set of diagonal edges with two white endvertices intersecting an edge of $G_b$ in an internal 
point.

\begin{lemma}\label{connected}
The graph $(V_{1,w}, E_{1,w}\backslash I_1)$ is connected.
\end{lemma}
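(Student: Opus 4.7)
My plan is to induct on $|E(G_b)|$. In the base case $|E(G_b)| = 0$, we have $I_1 = \emptyset$ and it suffices to prove that $(V_{1,w}, E_{1,w})$ is connected. I would achieve this by showing that any two white vertices of $C_1$ can be joined by a sequence of face-diagonal moves of the form $(\pm 1, \pm 1, 0)$, $(\pm 1, 0, \pm 1)$ or $(0, \pm 1, \pm 1)$, each corresponding to a white-white diagonal of some face; the assumption $x_1, y_1, z_1 \geq 1$ ensures that such moves can always be performed while staying in the cuboid.

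For the inductive step, let $G_b$ be a forest with $k+1$ edges, and let $b_1$ be a leaf vertex of $G_b$ with unique incident edge $e = b_1 b_2$ contained in some face $f$ of $C_1$. Set $G'_b := G_b \setminus \{e\}$, and let $I'_1$ denote the set of white diagonals crossed by edges of $G'_b$; then $I_1 = I'_1 \cup \{e^*\}$, where $e^*$ is the white diagonal of $f$. The inductive hypothesis yields that $(V_{1,w}, E_{1,w} \setminus I'_1)$ is connected, so it suffices to show that the removal of $e^*$ does not disconnect this graph, i.e.\ that $e^*$ is not a bridge.

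The key step is to produce an alternative path between the two white vertices $w_1, w_2$ of $f$ (where $e^* = w_1 w_2$) inside the link graph $L_{b_1}(C_1)$, whose vertices are the white neighbours of $b_1$ and whose edges correspond to faces of $C_1$ containing $b_1$. Since $b_1$ is a leaf of $G_b$, for every face $f' \neq f$ containing $b_1$ the black-black diagonal of $f'$ joins $b_1$ to a black vertex distinct from $b_2$ and therefore lies outside $G_b$; consequently the white diagonal of $f'$ is not in $I'_1$. Hence every edge of $L_{b_1}(C_1)$ other than $e^*$ belongs to $E_{1,w} \setminus I'_1$.

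It then remains to check that $L_{b_1}(C_1)$ is $2$-edge-connected, for then removing the single edge $e^*$ keeps it connected and yields the desired alternative $w_1$-$w_2$ path. By case analysis on the position of $b_1$ in the cuboid, the link is an octahedron when $b_1$ is interior, $K_5$ minus a matching of two edges when $b_1$ is face-interior, $K_4$ minus one edge when $b_1$ lies on an edge of the cuboid, and a triangle when $b_1$ is a corner; each of these small graphs is readily checked to be $2$-edge-connected. The main technical step is this finite verification, after which the induction closes.
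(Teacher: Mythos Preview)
Your argument is correct and genuinely different from the paper's. The paper argues by contradiction: if $(V_{1,w},E_{1,w}\setminus I_1)$ were disconnected, there would be a unit cube $K$ whose four white vertices are split between components; a short case analysis on how many of the other three white vertices lie in the component of one fixed white vertex then forces a $3$-cycle or a $4$-cycle among the black diagonals of the faces of $K$, contradicting that $G_b$ is a forest. No induction and no link-graph structure is used.

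Your route is inductive and more structural: peel off a leaf edge $e=b_1b_2$ of $G_b$, observe that $b_1$ becomes isolated in $G_b'=G_b\setminus e$ so that every white diagonal of a face through $b_1$ (other than $e^*$) survives in $E_{1,w}\setminus I_1'$, and then use $2$-edge-connectedness of the link graph $L_{b_1}(C_1)$ to reroute around $e^*$. This is clean, and the finite verification you need is exactly Observation~\ref{2-conn} of the paper (the link graphs are $W^2$, $W^2\setminus w$, $K_4\setminus e$, or $K_3$, all $2$-connected), so you are effectively reusing machinery the paper develops anyway. The paper's proof is a touch shorter and entirely local to a single unit cube; yours makes more transparent \emph{why} the forest hypothesis is precisely what is needed, since each leaf removal admits a local repair at that leaf. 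One small wording point: your link edges avoid $I_1'$ not merely because the black diagonal joins $b_1$ to some $b'\neq b_2$, but because $b_1$ is isolated in $G_b'$; you implicitly use this, and it would read more cleanly if stated directly.
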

\begin{proof}
We argue by contradiction. Suppose that the graph $(V_{1,w}, E_{1,w}\backslash I_1)$ is not connected. This means that there is a cuboid subcomplex $K$ of $C_1$ of size $1\times 1\times 1$ (i.e. a unit cube) with white vertices not all 
in the same connected component of $(V_{1,w}, E_{1,w}\backslash I_1)$. Suppose that the vertex of $K$ closest to $(0, 0, 0)$ is white and let $(w_1, w_2, w_3)$ be its 
coordinates (the case when this vertex is black is treated analogously). Then, if the connected component of $(w_1, w_2, w_3)$ in $(V_{1,w}, E_{1,w}\backslash 
I_1)$ contains none of $(w_1 + 1, w_2 + 1, w_3), (w_1 + 1, w_2, w_3 + 1)$ and $(w_1, w_2 + 1, w_3 + 
1)$, then the black diagonal edges $(w_1 + 1, w_2, w_3)(w_1, w_2 + 1, w_3)$, $(w_1 + 1, w_2, 
w_3)(w_1, w_2, w_3 + 1)$ and $(w_1, w_2 + 1, w_3)(w_1, w_2, w_3 + 1)$ are present in $E(G_b)$. See 
the left part of \autoref{SE}. This contradicts the fact that $G_b$ is a forest.\par
If the conected component of $(w_1, w_2, w_3)$ in $(V_{1,w}, E_{1,w}\backslash I_1)$ contains 
exactly one of the white vertices $(w_1 + 1, w_2 + 1, w_3), (w_1 + 1, w_2, w_3 + 1)$ and $(w_1, w_2 
+ 1, w_3 + 1)$, we may assume by symmetry that this is the vertex $(w_1 + 1, w_2 + 1, w_3)$. Then 
the black diagonal edges $(w_1, w_2 + 1, w_3)(w_1 + 1, w_2 + 1, w_3 + 1)$, $(w_1 + 1, w_2 + 1, w_3 + 
1)(w_1 + 1, w_2, w_3)$, $(w_1 + 1, w_2, w_3)(w_1, w_2, w_3 + 1)$ and $(w_1, w_2, w_3 + 1)(w_1, w_2 + 
1, w_3)$ are present in $E(G_b)$. See the right part of \autoref{SE}. Again, this contradicts the 
fact that $G_b$ is a forest.\par
It follows that the connected component of $(w_1, w_2, w_3)$ in $(V_{1,w}, E_{1,w}\backslash I_1)$ 
contains at least two of the other three white vertices in $K$. By symmetry 
this holds for every white vertex in $K$, which contradicts our initial assumption that not all white vertices of $K$ are in the same connected component of the graph $(V_{1,w}, E_{1,w}\backslash I_1)$. This 
proves the lemma.
\end{proof}

\begin{figure}
\centering
\includegraphics[scale=0.7]{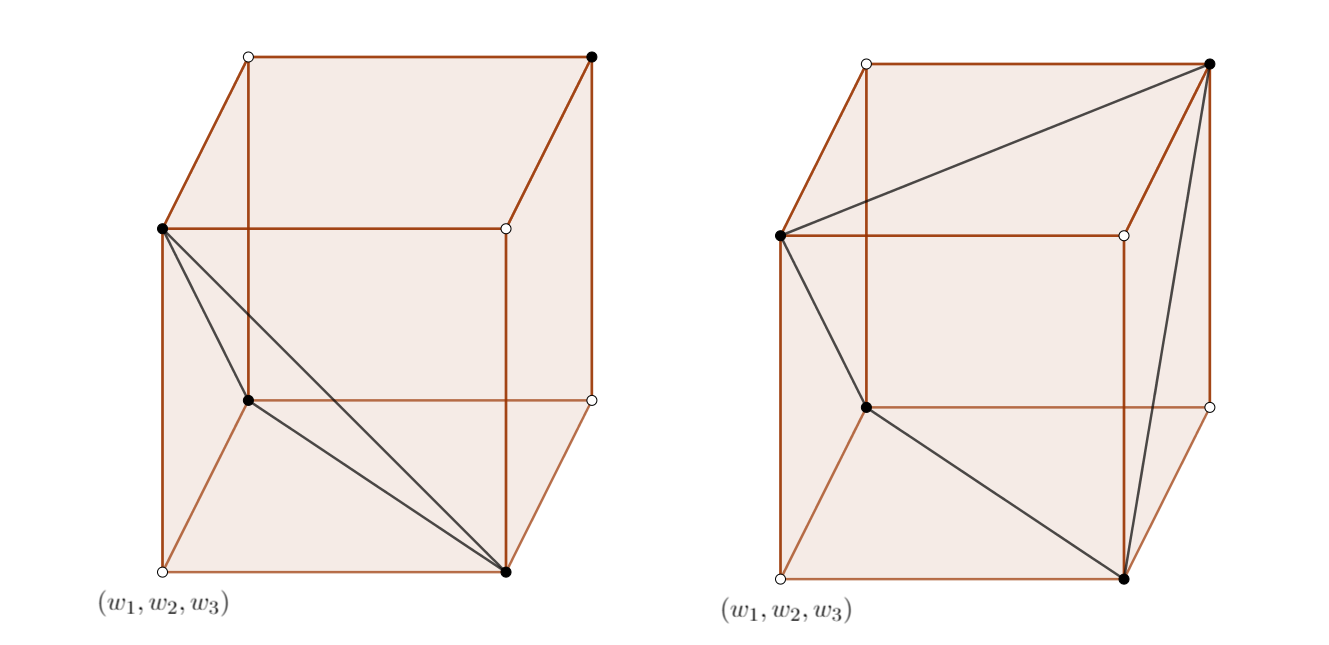}
\caption{On the left, the case when the connected component of $(w_1, w_2, w_3)$ in $(V_{1,w}, 
E_{1,w}\backslash I_1)$ contains no other white vertex in $K$. On the right, 
the case when the connected component of $(w_1, w_2, w_3)$ in $(V_{1,w}, E_{1,w}\backslash I_1)$ 
contains only the white vertex with coordinates $(w_1 + 1, w_2 + 1, w_3)$ in $K$.}
\label{SE}
\end{figure}

\begin{observation}\label{ob 3.3}
Every forest that is a subgraph of a connected graph $G$ can be extended to a spanning tree of $G$.
\end{observation}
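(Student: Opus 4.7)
The plan is to apply a straightforward greedy extension argument. First, I would enlarge the given forest $F$ to a spanning forest $F_0$ of $G$ by adjoining, as isolated vertices, every vertex of $G$ that does not already appear in $V(F)$; this preserves acyclicity and the property of being a subgraph of $G$. Then I would iteratively add edges of $E(G) \setminus E(F_0)$ to $F_0$, one at a time, subject only to the condition that the addition does not create a cycle. Since $E(G)$ is finite, this process terminates after finitely many steps, yielding a maximal acyclic subgraph $T \supseteq F$ of $G$ with $V(T) = V(G)$.

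Next, I would verify that $T$ is connected. Suppose for contradiction that $T$ had at least two connected components, say $C_1$ and $C_2$. Pick $u \in V(C_1)$ and $v \in V(C_2)$. Since $G$ is connected, there is a $u$-$v$ path in $G$, and traversing this path one finds some edge $e \in E(G)$ whose two endpoints lie in different components of $T$. Adding $e$ to $T$ cannot close a cycle, since its endpoints sit in distinct components; this contradicts the maximality of $T$. Hence $T$ is connected and acyclic, i.e., a spanning tree of $G$ containing $F$.

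I do not foresee any genuine obstacle: the only mild subtlety is keeping straight the distinction between $F$ viewed as an abstract forest and $F$ viewed as a subgraph of $G$, which is handled by the initial step of adjoining the missing vertices as isolated points. The observation is a classical fact of graph theory and is invoked in this paper only to extend the black-edge forest $G_b$ constructed before Lemma \ref{connected} into a genuine spanning tree.
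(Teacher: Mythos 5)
Your proof is correct and follows the same greedy-extension idea as the paper, which simply states that one adds edges one by one without creating cycles until no more can be added; you have merely spelled out the maximality-plus-connectivity argument that the paper leaves implicit.
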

\begin{proof}
The spanning tree can be obtained from the forest by adding edges one by one in such a way that no cycle is formed until this is possible.
\end{proof}

From now up to Lemma \ref{spanning_tree_extend} included we denote by $V_w$ or $V_b$ the set of white or black vertices of $C$, respectively. By $E_w$ or $E_b$ we denote the set of diagonal edges with two white or black endvertices in $C$, respectively.\par 
We construct a graph $G_{b, centre}$ as follows. \begin{enumerate}
    \item Consider the restriction $\Tilde{G}_{b, centre}$ of the graph $(V_b, E_b)$ to the vertex set of $C_{[n, n+1]}$.
    \item Delete the vertices of $\Tilde G_{b, centre}$ participating in $P_1$ and $P_5$. There are only two of them - the first and the last black vertices of $P$.
    \item Delete the edges of $\Tilde G_{b, centre}$ crossing an edge in $E(P)\cap E_w$. Again, there are only two of them - these are the diagonal edges crossing the second and the second-to-last edges of $P$.
\end{enumerate} 
To summarise, the graph $G_{b, centre}$ is obtained from the graph  $\Tilde G_{b, centre}$  by deleting edges and vertices as specified in 2 and 3. 

\begin{observation}\label{middle}
The graph $G_{b, centre}$ is connected.
\end{observation}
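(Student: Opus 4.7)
The plan is to reduce the observation to Lemma \ref{connected} (with the roles of the two colours interchanged, which is valid by colour symmetry of that statement) combined with a short local argument to handle the two deleted vertices. The key enabling point is that the slab $C_{[n,n+1]}$ is canonically isomorphic to a cuboid complex of size $1\times n\times n$, so Lemma \ref{connected} applies to it.

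First I would identify the four deleted objects explicitly. Tracing $P_1$ and $P_5$, the unique black vertices of these paths lying in the slab are $v_1=(n+1,0,1)$ and $v_2=(n,18,14)$. Among the edges of $E(P)\cap E_w$, exactly two lie in faces of $C$ contained in the slab: one white diagonal of $P_1$ inside the face at $x=n$, $0\le y,z\le 1$, and symmetrically one white diagonal of $P_5$ inside the face at $x=n+1$, $17\le y\le 18$, $14\le z\le 15$. A direct check shows that the unique black diagonals crossing these two white diagonals are $e_1=(n,0,0)(n,1,1)$ and $e_2=(n+1,17,14)(n+1,18,15)$, and these are precisely the edges removed in step~3 of the construction of $G_{b,centre}$.

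The two white diagonals above are vertex-disjoint, hence form a forest. Applying the colour-symmetric version of Lemma \ref{connected} to the slab with this white forest gives immediately that $\tilde G_{b,centre}-e_1-e_2$ is connected. It then remains to show that neither $v_1$ nor $v_2$ is a cut vertex of this graph, since a connected graph minus a non-cut-vertex remains connected, and performing the two vertex deletions in succession then yields $G_{b,centre}$; because $v_1$ and $v_2$ are far apart in the $y$- and $z$-coordinates, the two cut-vertex arguments do not interact. For each of $v_1$ and $v_2$ I would exhibit an explicit short path visiting all of its black diagonal neighbours using only diagonals of faces in a small cuboid region around the vertex. For $v_1$, the five neighbours are linked via
\[
(n,0,0)-(n+1,1,0)-(n,1,1)-(n+1,1,2)-(n,0,2),
\]
whose edges are diagonals of faces inside $[n,n+1]\times[0,1]\times[0,2]$, none of them being $e_1$ or $e_2$; an analogous path links the eight black diagonal neighbours of $v_2$ through diagonals of faces inside $[n,n+1]\times[17,19]\times[13,15]$.

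The only delicate point is the bookkeeping: checking by direct inspection of $P_1$ and $P_5$ that exactly two of their white diagonals lie in faces of the slab, that the corresponding crossing black diagonals both have both endpoints inside the slab, and that the local neighbour-linking paths chosen for $v_1$ and $v_2$ avoid $e_1$, $e_2$ and one another's central vertex.
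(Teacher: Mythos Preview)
Your argument is correct, but it takes a different route from the paper's proof. The paper proceeds by a direct decomposition along the two hyperplanes $x=n$ and $x=n+1$: it observes that the restriction of $G_{b,centre}$ to $C_{[n]}$ has exactly two components (one being the single vertex $(n,0,0)$, isolated by the removal of $e_1$), that the restriction to $C_{[n+1]}$ is connected, and then exhibits the two cross-edges $(n,0,0)(n+1,1,0)$ and $(n+1,1,0)(n,1,1)$ linking everything together. No appeal to Lemma~\ref{connected} is made.

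Your approach instead treats the whole slab at once via the colour-symmetric form of Lemma~\ref{connected}, reducing the problem to checking that two specific vertices are not cutvertices. This has the advantage of reusing an already-proved structural lemma and avoiding the (slightly implicit) claim in the paper's proof that the planar black-diagonal grids in $C_{[n]}$ and $C_{[n+1]}$ stay connected after the relevant vertex and edge deletions. The cost is more explicit bookkeeping: you must trace $P_1$ and $P_5$ carefully to identify $v_1$, $v_2$, $e_1$, $e_2$, and then write down local linking paths (your path for $v_1$ is correct; for $v_2$ one must take care since $e_2$ joins two of the eight neighbours, but a suitable Hamiltonian path through the eight neighbours avoiding $e_2$ does exist in the box you indicate). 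Both arguments are short; the paper's is terser but leans more on the reader's geometric intuition, while yours is more systematic.
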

\begin{proof}
Notice that the restriction of $G_{b, centre}$ to $C_{[n]}$ has exactly two connected components, one of which consists of the vertex $(n, 0, 0)$ only, and the restriction of $G_{b, centre}$ to $C_{[n+1]}$ is a connected graph. Now, it remains to see that the edges $(n, 0, 0)(n+1, 1, 0)$ and $(n+1, 1, 0)(n, 1, 1)$ are present in $G_{b, centre}$.
\end{proof}

\begin{lemma}\label{spanning_tree_extend}
Let $P$ be a spine. There is a set of diagonal edges extending $P$ to a tree $T'$ 
containing all vertices of $C$ with the following properties:
\begin{itemize}
    \item $T'$ uses only diagonal edges except two edges of $C$, one in the starting segment and one in 
the ending segment of the spine.
    \item Every fundamental cycle of $T'$ as a spanning tree of $(V, E\cup E(T'))$ 
contains at least one of the paths $P_2$ from $A$ to $B$ or $P_4$ from $A'$ to $B'$ in $P$. In particular:
\begin{itemize}
    \item If $xy$ is an edge in $E\backslash E(T')$ with white vertex $x$ in $C_{[n+1, 2n+1]}$, then the fundamental cycle of the edge $xy$ in $T'$ contains the subpath $P_4$ of $P$.
    \item If $xy$ is an edge in $E\backslash E(T')$ with white vertex $x$ in $C_{[0, n]}$, then the fundamental cycle of the edge $xy$ in $T'$ contains the subpath $P_2$ of $P$.
\end{itemize}
\end{itemize}
\end{lemma}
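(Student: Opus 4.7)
The plan is to extend the spine $P$ to a spanning tree $T'$ in two stages: first a spanning tree $T_b$ of the black vertices using black diagonal edges, then a spanning forest $T_w$ of the white vertices (with exactly two components) using white diagonal edges that are not crossed by $T_b$. Combining $T_b$, $T_w$, and the two transition edges of $P$ gives, by an edge count, a spanning tree $T'$ of $(V, E \cup E(T'))$; and by construction the only non-diagonal edges of $T'$ are the two transitions in the starting and ending segments of $P$.

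For $T_b$, I keep the black portion of $P$---the path through the first black vertex of $P_1$, then $A, P_2, B, P_3, A', P_4, B'$, and the last black vertex before the transition in $P_5$---and extend it region by region via Observation \ref{ob 3.3}. In the right region $C_{[n+2, 2n+1]}$ the extension is chosen so that every non-spine black vertex hangs off $B$ (rather than off $A$); symmetrically, in $C_{[0, n-1]}$ every non-spine black vertex hangs off $A'$. In the middle region $C_{[n, n+1]}$, Observation \ref{middle} tells us that $G_{b, centre}$ is connected, so a spanning tree of $G_{b, centre}$ can be extracted and attached to the existing tree by a single diagonal edge from one of its vertices to $B$; four such edges exist, connecting $B$ to its four middle diagonal neighbours in $C_{[n+1]}$. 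Feasibility of the right and left extensions requires connectivity of the respective black diagonal graphs after deleting $A$ or $B'$ and the edges crossed by already-chosen diagonals; for $n\geq 20$ this follows from the structure of the cuboid complex.

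The white forest $T_w$ is built using Lemma \ref{connected}: applied with the black forest $T_b$, it yields that the graph on $V_w$ whose edges are the white diagonals not crossed by $T_b$ is connected. I extract a spanning tree of this graph and delete one well-chosen edge to split it into two components, one containing all whites of $C_{[0, n]}$ (including the whites of $P_1$) and the other containing all whites of $C_{[n+1, 2n+1]}$ (including the whites of $P_5$). This $x$-coordinate partition guarantees that for a non-tree edge $xy$, the unique transition used on the $T'$-path between $x$ and $y$ is $t_1$ when $x \in C_{[0, n]}$ and $t_2$ when $x \in C_{[n+1, 2n+1]}$. Combined with the Stage-1 structure of $T_b$, the $T_b$-portion of the path is forced to cross $P_2$ in the former case and $P_4$ in the latter, yielding both clauses of the ``in particular'' part, and in particular the main statement.

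The main obstacle is the middle region $C_{[n, n+1]}$: a middle black vertex $y$ is typically adjacent in $C$ to whites on both sides of the $x$-coordinate partition, so attaching the middle subtree directly to one of the two endpoints of the spine in the middle would cause one of the two ``in particular'' cases to fail. The key trick is to route the middle attachment through the interior of the spine---specifically through $B$ (one could equally use $A'$)---because $B$ lies past $P_2$ as seen from the first black vertex of $P$ and past $P_4$ as seen from the last, so the $T_b$-path from either endpoint to any middle vertex threads through the corresponding overhand subpath. That the resulting $T'$ is a spanning tree follows from the edge count, and the cycle condition is then a straightforward case analysis on which region the black endpoint $y$ lies in.
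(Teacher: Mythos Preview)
Your overall strategy matches the paper's: build a black spanning tree $T_b$ extending the spine, build white trees on each side of a vertical cut, and join everything via the two transition edges. However, two steps do not work as written.

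\textbf{The middle region creates a cycle.} You keep all of $P_3$ in the black portion of $P$, so its two interior vertices $m_1=(n+1,9,8)$ and $m_2=(n,9,7)$ are already in your existing tree, linked to $B$ and to $A'$ by the first and third edges of $P_3$. But $m_1,m_2$ also lie in $G_{b,\text{centre}}$, so any spanning tree of $G_{b,\text{centre}}$ contains them as well. Taking the union with the existing tree and then adding a further attachment edge from $B$ into $C_{[n+1]}$ necessarily closes a cycle. The paper's fix is simple but essential: it forces the spanning tree of $G_{b,\text{centre}}$ to contain the middle edge $m_1m_2$ of $P_3$, and uses the first and third edges of $P_3$ themselves as the (two) attachments to $T^b_1$ and $T^b_2$; no extra edge to $B$ is added.

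\textbf{The white split is not justified.} Applying Lemma~\ref{connected} with $G_b=T_b$ only gives that $(V_w,E_w\setminus I)$ is globally connected. A generic spanning tree of this graph may use several white diagonals crossing between $C_{[0,n]}$ and $C_{[n+1,2n+1]}$, and there is no reason a single edge deletion yields precisely the bipartition $V_w\cap C_{[0,n]}$ versus $V_w\cap C_{[n+1,2n+1]}$. What is needed is that each half is itself connected in $(V_w,E_w\setminus I)$. The paper obtains this by applying Lemma~\ref{connected} separately to the sub-cuboids $C_{[0,n]}$ and $C_{[n+1,2n+1]}$ (the restriction of $T^b$ to each is a forest, so the hypotheses hold), and then takes a spanning tree of each half directly, also arranging that the white diagonals of $P_1$ (resp.\ $P_5$) lie in the left (resp.\ right) white tree.

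As a side remark, your ``hangs off $B$'' and ``hangs off $A'$'' requirements on the right and left black extensions are unnecessary (and their feasibility is not obvious). Once the white side is correctly split, the $T'$-path from a white $x\in C_{[n+1,2n+1]}$ to its black neighbour $y$ is forced through the $P_5$ transition and hence traverses $P_4$ before it can reach any of $P_1,P_2,P_3,T^b_1,T^b_3$, irrespective of where the right-region black extension meets $P_2$; and symmetrically for the other side.
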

\begin{proof}
Like in the proof of Lemma \ref{spine}, we denote by $P_1$ the 
starting segment of $P$, by $P_3$ the path in $P$ from $B$ to $A'$ and by $P_5$ the ending segment of $P$. \par 

The graph $G_{b, right}$ is the induced subgraph of the graph $(V_b, E_b)$ with vertex set $V_b\cap C_{[n+2, 2n+1]}$. The graph $G_{b, right}$ is connected and contains the path $P_2$.  By Observation \ref{ob 3.3} the path $P_2$ can be extended to a spanning tree of $G_{b, right}$.  We choose one such spanning tree and denote it by $T^b_1$.\par

Similarly the graph $G_{b, left}$ is the induced subgraph of the graph $(V_b, E_b)$ with vertex set $V_b\cap C_{[0, n-1]}$. The graph $G_{b, left}$ is connected and contains the path $P_4$. Again, by Observation \ref{ob 3.3} the path $P_4$ can be extended to a spanning tree of $G_{b, left}$. We choose one such spanning tree and denote it by $T^b_2$.\par

The black vertices of $C$ not covered by $P$, $T^b_1$ and $T^b_2$ are the ones of $G_{b, centre}$. The graph $G_{b, centre}$ is connected by Observation \ref{middle}. We apply Observation \ref{ob 3.3} to the forest consisting of the second diagonal edge $e$ of the path $P_3$. Note that this forest is included in $G_{b, centre}$. We conclude that there is a spanning tree of $G_{b, centre}$ containing $e$. Choose one such spanning tree and denote it by $T^b_3$. Thus, the restriction of $P\cup T^b_1\cup T^b_2\cup 
T^b_3$ to $(V_b, E_b)$ forms a spanning tree of $(V_b, E_b)$, which we call $T^b$. (Indeed, it is connected as the set $P$ interests all the other three trees in the union and the union is acyclic and contains all black vertices by construction.)\par

Let $I$ be the set of diagonal edges with two white 
endvertices in $C$ intersecting an edge of $T^b$. As $T^b$ is a tree, the induced subgraph of $T_b$ obtained by restricting to the vertex set of $C_{[n+1, 2n+1]}$ is a forest. We apply Lemma \ref{connected} with $C_1 = 
C_{[n+1, 2n+1]}$ and $I_1$ the subset of $I$ consisting of those edges with both endvertices in $C_{[n+1, 2n+1]}$ to deduce that the induced subgraph of the graph $(V_w, E_w\backslash I)$ obtained by restricting to the vertex set of $C_{[n+1, 2n+1]}$ forms a connected graph that we call $G_{w, right}$.
By Observation \ref{ob 3.3} there is a spanning tree of $G_{w, right}$, which contains the last two diagonal edges of the ending segment $P_5$ of the spine $P$. We choose one such tree and call it $T^w_1$.\par

Similarly, as $T^b$ is a tree, the induced subgraph of $T_b$ obtained by restricting to the vertex set of $C_{[0, n]}$ is a forest. We apply Lemma \ref{connected} with $C_1 = 
C_{[0, n]}$ and $I_1$ the subset of $I$ with both endvertices in $C_{[0, n]}$ to deduce that the induced subgraph of the graph $(V_w, E_w\backslash I)$ obtained by restricting to the vertex set of $C_{[0, n]}$ forms a connected graph. We call that connected graph $G_{w, left}$.
By Observation \ref{ob 3.3} there is a spanning tree of $G_{w, left}$, which contains the first two diagonal edges of the starting segment $P_1$ of the spine $P$. We choose one such tree and call it $T^w_2$.\par

We define $T' = P\cup T^b\cup T^w_1\cup T^w_2$. We denote its vertex set by $V$ and its edge set by $E(
T')$. $T'$ is a tree, and hence a spanning tree of the graph $(V, E\cup E(T'))$. We now prove that every 
fundamental cycle of $T'$ contains at least one of the paths $P_2$ from $A$ to $B$ and 
$P_4$ from $A'$ to $B'$ in the spine $P$.\par
All of the edges in $E\backslash E(T')$ have one white and one black endvertex. We treat 
edges 
with white endvertex in $C_{[n+1, 2n+1]}$ and edges with white endvertex in $C_{[0,n]}$ separately. Choose an edge $xy$ in $E\backslash E(T')$ with white endvertex $x$. If $x$ is a vertex of $C_{[n+1, 2n+1]}$, then $x$ is a vertex of $T^w_1$. This means that $y$ has abscissa at least $n$ and is a vertex of one of the graphs
$P_1$, $P_2$, $P_3$, $T^b_1$ or $T^b_3$. Thus the fundamental cycle of the edge $xy$ in $T'$ contains $P_4$ by construction. 

Similarly, if $x$ is a vertex of $C_{[0,n]}$ and is consequently covered by $T^w_2$, then $y$ must belong to one the graphs $P_3$, $P_4$, 
$P_5$, $T^b_2$ or $T^b_3$. It follows that the fundamental cycle of the edge $xy$ in $T'$ contains $P_2$ by construction, which finishes the proof.
\end{proof}

\begin{figure}
\centering
\includegraphics[scale=0.7]{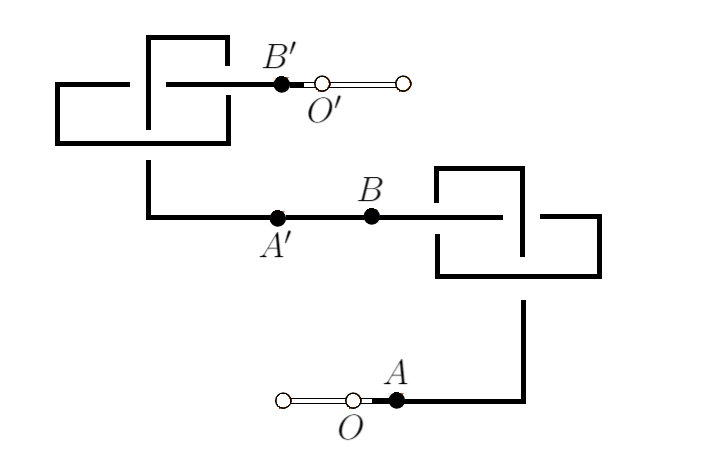}
\caption{An approximative scheme of a spine.}
\label{Kn}
\end{figure}

We now subdivide some of the faces of $C$ by using the edges of $T'$ with endvertices in the same colour. This defines the $2-$complex $C' = (V', E', F')$. 

As subdivisions of faces do not change the topological properties of the $2-$complex, $C'$ 
is a simply connected $2-$complex. Let us call the embedding of $C'$ in $3-$space obtained after 
subdivisions of faces of the canonical embedding of $C$ \textit{canonical embedding of 
$C'$}.

In the following lemma we prove that every fundamental cycle of $T'$ as a spanning tree of the $1-$skeleton of
$C'$ forms a nontrivial knot in the canonical embedding of $C'$. Otherwise said, we prove that $T'$ is entangled with respect to the canonical 
embedding of $C'$.

\begin{lemma}\label{L 3.6}
Every fundamental cycle of the spanning tree $T'$ forms a nontrivial knot in the 
canonical 
embedding of $C'$. 
\end{lemma}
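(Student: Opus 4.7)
The plan is to exhibit, for each fundamental cycle $\gamma$, a $2$-sphere in the ambient $\mathbb{S}^3 = \mathbb{R}^3 \cup \{\infty\}$ that separates the overhand subpath $P_4$ (or $P_2$) from the rest of $\gamma$, so that the standard knot-theoretic sphere decomposition expresses $\gamma$ as a connected sum with a trefoil factor, after which nontriviality follows from \autoref{L 2.6}.

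By \autoref{spanning_tree_extend}, for every non-tree edge $e = xy$ of $C'$ the fundamental cycle $\gamma$ of $e$ contains either $P_2$ or $P_4$ as a subpath, depending on whether the white endpoint of $e$ lies in $C_{[0, n]}$ or in $C_{[n+1, 2n+1]}$. The two cases are symmetric, so I suppose that $\gamma$ contains $P_4$ and write $\gamma = P_4 \cup Q$, where $Q$ is the complementary arc of $\gamma$ from $B'$ back to $A'$.

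The main obstacle is to verify that $Q$ is contained in the closed half-space $\{x \geq n-1\}$ and meets the plane $\{x = n-1\}$ only at its endpoints $A'$ and $B'$. To establish this, I would trace $Q$ through the decomposition $T' = P \cup T^b_1 \cup T^b_2 \cup T^b_3 \cup T^w_1 \cup T^w_2$ produced in the proof of \autoref{spanning_tree_extend}. Since $x \in T^w_1$ and, by that same proof, $y$ must lie in $P_1 \cup P_2 \cup P_3 \cup T^b_1 \cup T^b_3$, the arc $Q$ proceeds from $B'$ along an initial portion of $P_5$, enters $T^w_1$ at one of the three white vertices of $P_5$, continues inside $T^w_1$ to $x$, crosses $e$ to $y$, and returns through $T^b_1$ or $T^b_3$ (or directly along the spine), a portion of $P_2$, and the three-edge diagonal path $P_3$ back to $A'$. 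Each subgraph appearing in this traversal has abscissas at least $n-1$ by construction (for instance $T^w_1 \subseteq C_{[n+1, 2n+1]}$, $T^b_1 \subseteq C_{[n+2, 2n+1]}$, $T^b_3 \subseteq C_{[n, n+1]}$, $P_5$ has abscissas in $\{n-1, n, n+1, n+2\}$, and $P_3$ lies in the slab $\{n-1 \leq x \leq n+2\}$), and the only vertices of $Q$ whose abscissa equals $n-1$ are $A'$ (the terminal vertex of $P_3$) and $B'$ (the initial vertex of $P_5$).

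Granted this separation, the set $S = \{x = n-1\} \cup \{\infty\}$ is a $2$-sphere in $\mathbb{S}^3$ meeting $\gamma$ transversely in exactly $\{A', B'\}$, with $P_4 \setminus \{A', B'\}$ lying in one complementary ball of $\mathbb{S}^3 \setminus S$ and $Q \setminus \{A', B'\}$ in the other. The standard knot-theoretic sphere decomposition then represents $\gamma$ as a connected sum $K_1 \# K_2$, where $K_1$ is obtained by closing $P_4$ with a simple arc from $A'$ to $B'$ in $S$ and $K_2$ by closing $Q$ similarly. Taking this closing arc for $K_1$ to be the straight line segment from $A'$ to $B'$ in the plane $\{x = n-1\}$, which is disjoint from the interior of $P_4$ since the latter lies in $\{x \leq n-2\}$, the knot $K_1$ is a trefoil by the very definition of an overhand path. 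Hence $K_1$ is nontrivial, and \autoref{L 2.6} yields that $\gamma = K_1 \# K_2$ is itself nontrivial, as required.
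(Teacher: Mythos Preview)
Your argument is correct and follows essentially the same route as the paper: both proofs use \autoref{spanning_tree_extend} to locate the overhand subpath $P_2$ or $P_4$ inside the fundamental cycle, then realise the cycle as a connected sum with a trefoil factor and conclude via \autoref{L 2.6}. Your version is somewhat more explicit than the paper's, in that you name the separating sphere $\{x=n-1\}\cup\{\infty\}$ and trace through the pieces $T^w_1$, $T^b_1$, $T^b_3$, $P_3$, $P_5$ of the construction in \autoref{spanning_tree_extend} to verify that the complementary arc $Q$ lies in the half-space $\{x\geq n-1\}$; the paper simply asserts the connected-sum decomposition along the straight segment $A'B'$ (respectively $AB$) and refers to \autoref{k1k2}.
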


\begin{proof}
All of the edges of $C'$ not in $T'$ have one white and one black endvertex. 
We treat edges 
with white endvertex with abscissa at least $n+1$ and edges with white endvertex with abscissa at 
most 
$n$ separately.\par 
Let $e = xy$ be an edge of $C'$ not in $T'$ with white endvertex $x$. If $x$ has abscissa at least $n+1$, then the fundamental cycle $o_e$ of $e$ contains the path $P_4$ by Lemma \ref{spanning_tree_extend}. Thus, we 
can decompose the knot formed by the embedding of the fundamental cycle 
$o_e$ induced by the canonical embedding of $C'$ as a connected sum of the knot $K$, containing $e$, the line segment 
between
$A'$ and $B'$ and the paths in $T'$ between $y$ and $A'$ and between $B'$ and $x$, and the knot
$K'$, 
containing only the line segment between $A'$ and $B'$ and $P_4$. See \autoref{k1k2}. As $K'$ is a 
nontrivial knot, the connected sum $K \# K'$ is a nontrivial knot by Lemma \ref{L 2.6}. This proves 
that the present embedding of $o_e$ forms a nontrivial knot.\par 
In the case when $x$ has abscissa at most $n$, the fundamental cycle 
$o_e$ of $e$ contains the path $P_2$ by Lemma \ref{spanning_tree_extend}, so its embedding, induced by the canonical embedding of $C'$, can be decomposed 
in a similar fashion as a connected sum of the knot $K$, containing $e$, the line segment between $A$ and 
$B$ and the paths in $T'$ between $x$ and $A$ and between $B$ and $y$, 
and the knot $K'$, containing only the line segment between $A$ and $B$ and $P_2$. Once again by Lemma \ref{L 2.6} $K \# K'$ is a nontrivial knot because $K'$ is a nontrivial knot. Thus $T'$ is 
entangled 
with respect to the canonical embedding of $C'$.
\end{proof}

\begin{figure}
\centering
\includegraphics[scale=0.7]{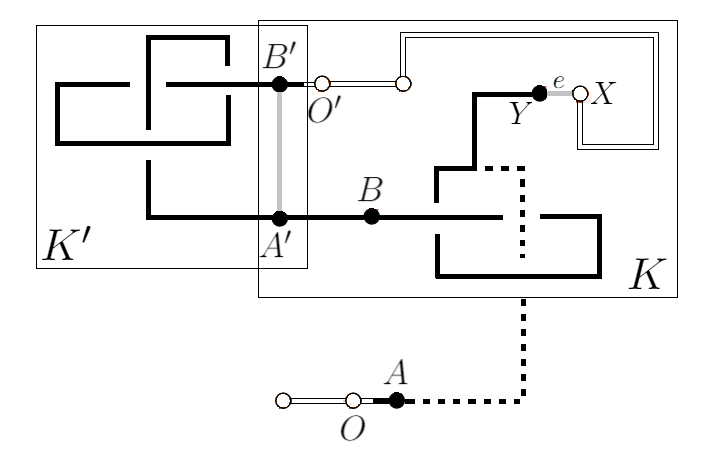}
\caption{$\gamma_e$ is a connected sum of $K$ and $K'$.}
\label{k1k2}
\end{figure}

We continue with the proof of \autoref{Thm 2}. Our next goal will be to prove the following lemma:
\begin{lemma}\label{embedC'}
The $2-$complex $C'$ has a unique embedding in $3-$space up to homeomorphism.
\end{lemma}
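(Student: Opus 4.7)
The strategy is to reduce uniqueness of embedding of $C'$ to that of the underlying cuboid complex $C$, exploiting the fact that $C'$ differs from $C$ only by subdividing certain quadrilateral faces by arcs (the diagonal edges of $T'$). I would first take as input that the cuboid complex $C$ has a unique embedding in $\mathbb{S}^3$ up to homeomorphism. This is a standard fact for simply connected $2$-complexes with sufficient local connectivity, and for the cuboid complex it can be verified directly by analysing the (planar) rotation systems at each link graph; alternatively it is invoked in the commented remark following \autoref{Thm 2}.

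Given an arbitrary embedding $\iota'$ of $C'$ in $\mathbb{S}^3$, I would construct an auxiliary embedding $\iota$ of $C$ by \emph{forgetting the diagonals}. The $1$-skeleton of $C$ is contained in that of $C'$, so $\iota'$ restricted to $Sk_1(C)$ embeds it. For each face $F$ of $C$ not subdivided in $C'$, set $\iota(F) := \iota'(F)$. For each subdivided face $F$, let $T_1, T_2$ be the two triangular faces of $C'$ sharing the diagonal $d$; then $\iota'(T_1) \cup \iota'(T_2)$ is the union of two disks meeting exactly along the arc $\iota'(d)$, hence an embedded disk whose boundary is the image of $\partial F$, and I set $\iota(F)$ to be this disk. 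Since the interiors of distinct faces of $C'$ are pairwise disjoint, distinct faces of $C$ so defined meet only along shared edges, so $\iota$ is indeed an embedding. By the assumed uniqueness of embedding of $C$, there is a homeomorphism $h$ of $\mathbb{S}^3$ carrying $\iota$ onto the canonical embedding of $C$.

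After applying $h$, the $1$-skeleton of $C$ and all faces of $C$ are in their canonical positions, and it only remains to move each diagonal into its canonical straight-line position. For each subdivided face $F$, the image of the corresponding diagonal edge is a proper arc in the disk $F$ joining two prescribed vertices on $\partial F$. By the classical fact that any two proper arcs in a closed $2$-disk with the same endpoints are ambient isotopic via an isotopy fixing the boundary pointwise, there is a homeomorphism $\phi_F$ of $\mathbb{S}^3$ supported in a small neighbourhood of $F$ (fixing $\partial F$) that sends this arc to the canonical straight-line diagonal. Choosing the supports pairwise disjoint — which is possible since the faces of $C$ meet only along their common $1$-skeleton, where each $\phi_F$ is the identity — the composition of these $\phi_F$'s with $h$ yields a homeomorphism of $\mathbb{S}^3$ realising $\iota'$ as the canonical embedding of $C'$. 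The principal obstacle is step one, namely justifying uniqueness for $C$ itself; once this is in hand, the rest is a routine application of the disk-arc isotopy lemma together with the disjointness argument above.
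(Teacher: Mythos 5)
Your overall reduction is the same as the paper's: pass from an arbitrary embedding of $C'$ to an embedding of $C$ by forgetting the diagonals, invoke uniqueness of the embedding of $C$, and then note that subdividing faces by arcs does not affect the embedding up to homeomorphism. The arc-isotopy details you supply for the last step are fine (the paper leaves them implicit). The problem is that you have black-boxed precisely the step that carries all the mathematical content of this part of the paper, and the justification you sketch for it would not go through as stated.

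Uniqueness of the embedding of $C$ is \emph{not} a direct consequence of the ``standard fact'' you cite, because that fact requires the complex to be locally $3$-connected, and $C$ is not: as the paper points out, the link graph at a boundary vertex such as $(1,0,0)$ is $K_4$ minus an edge, and at a corner vertex it is $K_3$; these are only $2$-connected. So ``analysing the rotation systems at each link graph'' does not immediately yield uniqueness --- planarity of the link graphs at $2$-connected (but not $3$-connected) vertices does not pin down the rotation system. The paper's actual route (Corollary \ref{uniqueC}) is to embed $C$ in the larger cuboid complex $C^+$ of size $(2n+3)\times(n+2)\times(n+2)$, contract everything disjoint from $C$ to a single vertex $t$, prove that the link graph at $t$ is $3$-connected (Lemma \ref{3-conn}), conclude that $C^t$ is simply connected and locally $3$-connected and hence uniquely embeddable, and finally show that every embedding of $C$ extends to one of $C^t$ by observing that the boundary subcomplex of $C$ is an embedded $2$-sphere separating $\mathbb S^3$ into two balls, so $t$ can be placed in the component disjoint from $C$. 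Without this construction (or some substitute for it), your first step is a genuine gap, and it is the principal one; the remainder of your argument is routine, as you say.
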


As the $2-$complex $C'$ is obtained from the cuboid complex $C$ by subdividing some of the faces of $C$, the two complexes are topologically equivalent. Therefore in the sequel we work with $C$ rather than $C'$ to avoid technicalities that have to do with the diagonal edges, which are irrelevant for the proof of Lemma \ref{embedC'}.

From (\citep{JC1}, Section 4) combined with Lemma \ref{iso} we know that every simply connected and locally $3-$connected\footnote{For every $k\geq 2$, a simplicial complex is \textit{locally $k-$connected} if each of its link graphs is $k-$connected.} simplicial complex embeddable in $\mathbb S^3$ has a unique embedding in $3-$space up to homeomorphism. One may be tempted to apply this result to the simply connected $2-$complex $C$ directly. Although the link graphs at most of its vertices are $3-$connected, this does not hold for all of them. For example, the link graph at the
vertex with coordinates $(1,0,0)$ in the canonical embedding of $C$ is equal to the complete graph $K_4$ minus an edge. It is easy to see that this graph can be disconnected by deleting the two vertices of degree 3. Another obstacle comes from the link graphs at the "corner vertices" of $C$ (take $(0,0,0)$ for example), which are equal to $K_3$ and are therefore only $2-$connected.

Our goal now will be to construct a $2-$complex, which contains $C$ as a subcomplex and is moreover embeddable in $3-$space, simply connected and locally $3-$connected at the same time. Roughly speaking, the construction consists of packing $C$ (seen in its canonical embedding) with one layer of unit cubes to obtain a cuboid complex of size $(2n+3)\times (n+2)\times (n+2)$ containing $C$ in its inside, and then contract all edges and faces disjoint from $C$.

The formal construction goes as follows, see Figure \ref{3-connFIG}. Let $C^+$ be the cuboid complex of size $(2n+3)\times (n+2)\times (n+2)$. Let $\iota^+$ be its canonical embedding. The restriction of $\iota^+$ to the cuboid $[1,2n+2]\times [1,n+1]\times [1,n+1]$ is the canonical embedding of $C$ (translated to the vector $(1,1,1)$). Thus we view $C$ as a subcomplex of $C^+$.

\begin{observation}\label{Ob3}
The $2-$complex $C^+$ is simply connected.\qed
\end{observation}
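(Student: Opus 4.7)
The plan is to realise $C^+$ as the $2$-skeleton of a contractible CW complex. I would extend $C^+$ to a $3$-dimensional cubical complex $\widehat{C^+}$ by attaching a closed $3$-cell along each of the unit cubes that tile the box $[0,2n+3]\times[0,n+2]\times[0,n+2]$ in the canonical embedding $\iota^+$. This is well defined because the boundary of every such unit cube is a union of six unit $4$-faces, each of which is already a face of $C^+$ by the very definition of a cuboid complex. Consequently $\widehat{C^+}$ is homeomorphic to the solid cuboid itself; since the latter is convex (in particular contractible), $\widehat{C^+}$ is simply connected.

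I would then apply the standard fact that attaching cells of dimension at least three to a CW complex does not alter its fundamental group, since the attaching maps factor through the $2$-skeleton and contribute neither new generators nor new relations. This gives $\pi_1(C^+) = \pi_1(\widehat{C^+}) = 0$, and combined with the evident path connectedness of $C^+$ (its $1$-skeleton is the cuboid graph, which is connected), it yields the observation. I do not anticipate a serious obstacle: the only point worth verifying is that the boundary of each unit cube is already present as a subcomplex of $C^+$, which is immediate from the fact that the face set of a cuboid complex contains a face for every $4$-cycle of its cuboid graph.
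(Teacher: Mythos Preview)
Your argument is correct. The paper offers no proof at all---the observation is stated with a bare \qed---so there is nothing to compare against; you have supplied a standard and valid justification for what the authors regard as self-evident.
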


Let us contract all edges and faces of $C^+$ disjoint from $C$ to a single vertex $t$. By Observations \ref{Ob2} and \ref{Ob3} this produces a simply connected $2-$complex $C^t$.

\begin{lemma}\label{3-conn}
The link graph at the vertex $t$ of the $2-$complex $C^t$ is $3-$connected.
\end{lemma}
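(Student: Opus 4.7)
The plan is to realize the link graph $L_t$ as the $1$-skeleton of a convex $3$-polytope, and then invoke Balinski's theorem (the $1$-skeleton of any convex $d$-polytope is $d$-connected).

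First, I would describe $L_t$ combinatorially. Each edge of $C^t$ incident with $t$ arises from an edge of $C^+$ with exactly one endpoint outside $V(C)$, and is thus indexed by a pair $(v,d)$, where $v\in V(C)$ lies on the boundary of the cuboid of $C$ inside $C^+$ and $d\in\{\pm e_1,\pm e_2,\pm e_3\}$ is an \emph{outer direction} of $v$ (meaning $v+d\notin V(C)$). Each face of $C^t$ at $t$ arises from a face of $C^+$ with between one and three vertices outside $V(C)$. A short case check on the positions of the four vertices of a unit square in $C^+$ rules out the one-outer case, leaving two types: \emph{Type A} (two adjacent outer, two adjacent inner), contributing in $L_t$ a multi-edge between $(u,d)$ and $(w,d)$ for the edge $uw$ of $C$ on the boundary face of $C$ in direction $d$; and \emph{Type B} (three outer, one inner $w$), contributing a multi-edge between $(w,d_1)$ and $(w,d_2)$ for the two outer directions $d_1,d_2$ of $w$ coming from that face. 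A quick inspection shows that no two faces yield the same endpoint pair, so $L_t$ is in fact a simple graph.

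Next, I would exhibit $L_t$ as the $1$-skeleton of a convex polytope $P\subset\mathbb{R}^3$ built from the cuboid of $C$ by the following expansion: shrink each of the six big boundary faces of $C$ slightly inward, insert a thin rectangular ladder strip perpendicular to each of the twelve big boundary edges of $C$, and insert a small triangular face at each of the eight big corners. Then subdivide the shrunk faces and the ladder strips by the grid structure inherited from $C$, and perturb each newly added grid vertex infinitesimally outward so that it becomes extremal; for sufficiently small perturbation $P$ remains a convex polytope. An explicit matching then confirms that the vertices of $P$ correspond to pairs (boundary vertex of $C$, outer direction), that grid edges on shrunk faces and rails along ladder strips correspond to Type A multi-edges, and that rungs of ladder strips together with the edges of the triangular corner caps correspond to Type B multi-edges. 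Hence the $1$-skeleton of $P$ is precisely $L_t$.

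Finally, since $P$ is a convex $3$-polytope, Balinski's theorem yields that its $1$-skeleton is $3$-connected, so $L_t$ is $3$-connected. The main obstacle is the geometric realization: one must ensure that the expansion parameters and the outward perturbation are small enough that the combinatorial type of $P$ matches the description of $L_t$ above. This is a standard genericity argument, since the combinatorial type of a convex polytope is locally constant under sufficiently small perturbations of its vertex set.
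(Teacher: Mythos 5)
Your combinatorial description of $L_t$ is correct and agrees with the paper's: the paper likewise identifies $L_t$ with the six outward-translated copies of the boundary grids of $C$, joined by an extra edge whenever one integer boundary point appears in two different copies (your Type B edges). But where the paper then checks $3$-connectedness directly --- exhibiting three (in fact four) internally disjoint paths between any two vertices and invoking Menger's theorem --- you route the argument through a convex realization and Balinski's theorem, and it is exactly the realization step that has a genuine gap.

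The polytope $P$ you need is not simplicial: its $2$-faces must include all the unit squares of the six shrunk grids and of the ladder strips, and each such square must survive as a genuine quadrilateral face of the convex hull, because a square that breaks into two triangles contributes a diagonal edge that is not in $L_t$. The principle you invoke --- that the combinatorial type of a convex polytope is locally constant under small perturbations of its vertex set --- is false in precisely this non-simplicial situation (it holds only for simplicial polytopes; perturbing one vertex of a cube already changes its face lattice). Worse, the perturbation you describe, pushing each interior grid vertex of a flat face outward, necessarily destroys the coplanarity of the four squares meeting at that vertex, so a \emph{generic} such perturbation produces a polytope whose $1$-skeleton strictly contains $L_t$; and $3$-connectedness of a supergraph implies nothing about $3$-connectedness of a spanning subgraph. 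To salvage the construction you would have to choose a very special, non-generic lifting --- for instance displacing the grid point $(i,j)$ of each face by a height proportional to $-(i^2+j^2)$, so that the discrete mixed second differences vanish, every unit square stays planar, and the resulting piecewise linear cap is convex --- and then verify convexity across the seams where the caps meet the ladder strips and the corner triangles. None of this is supplied, and since the direct Menger verification on the explicit graph is short, the Steinitz--Balinski detour buys nothing here; as written, the argument does not close.
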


\begin{proof}
Let us consider the embedding $\iota^t$ of the $2-$complex $C^t$ in $\mathbb S^3$ in which $\iota^t(t) = \infty$, $\iota^t_{|C = C^t\backslash \{t\}}$ is the canonical embedding of $C$ in $3-$space and for every face $f$ of $C^t$, $\iota^t(f)$ is included in some affine plane of $\mathbb R^3\cup \{\infty\}$. From this embedding of $C^t$ we deduce that the link graph at $t$ in $C^t$ can be embedded in $\mathbb R^3$ as follows. Consider the integer points (i.e. the points with three integer coordinates) on the boundary of the cuboid $\iota^t(C)$. Construct a copy of the $1-$skeleton of each side of $\iota^t(C)$ by translating it to an outgoing vector of length one orthogonal to this side. Then, add an edge between every pair of vertices, which are the images of the same integer point on the boundary of the cuboid $\iota^t(C)$ under two different translations. Otherwise said we add edges between the pairs of integer points in $\mathbb R^3$, which are in the copies of two different sides of the cuboid and at euclidean distance $\sqrt{2}$. See \autoref{3-connFIG}.

\begin{figure}
\centering
\includegraphics[scale=0.6]{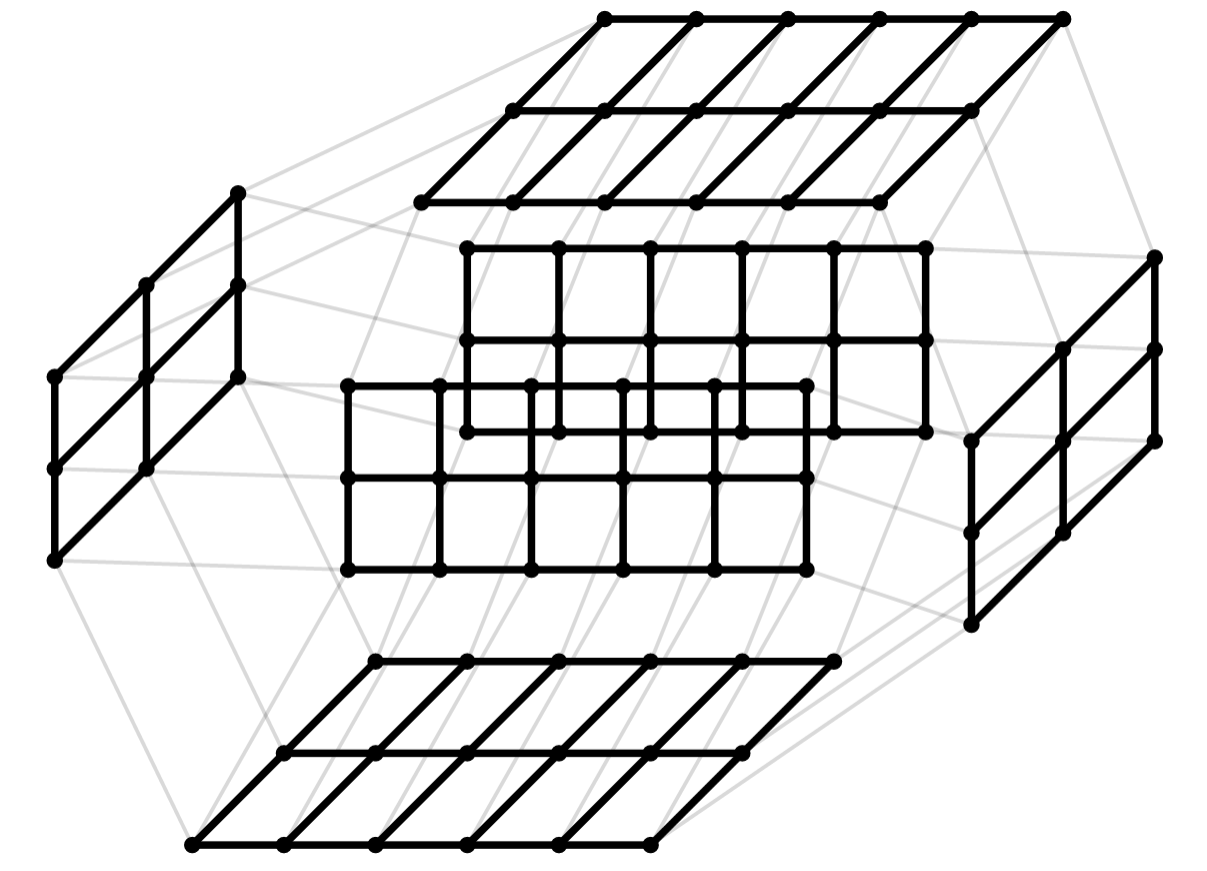}
\caption{The link graph at $t$ in $C^t$. Here $n=2$. The copies of all six sides are depicted in black while the edges added between between copies of two different sides are coloured in light grey.}
\label{3-connFIG}
\end{figure}

We easily verify now that in the graph constructed above there are at least three vertex-disjoint paths between every two vertices (indeed, there are always four such paths). By Menger's theorem the link graph at $t$ in $C^t$ is then $3-$connected. 
\end{proof}

The \textit{double wheel graph} is the graph on six vertices, which is the complement of a perfect matching. We denote it by $W^2$.

\begin{corollary}\label{locally3-conn}
The $2-$complex $C^t$ is locally $3-$connected.
\end{corollary}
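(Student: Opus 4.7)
The plan is to verify that every link graph of $C^t$ is $3$-connected. The vertex set of $C^t$ consists of the vertices of $C$ together with the single contracted vertex $t$; the case of $t$ has already been handled by Lemma \ref{3-conn}, so the remaining work is to treat the vertices of $C$.

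For any vertex $v$ of $C$, the decisive observation is that the translation by $(1,1,1)$ makes $v$ an interior vertex of $C^+$: all six of its axis-aligned neighbours in $\mathbb Z^3$ belong to $C^+$. I would therefore describe the link graph at such an interior vertex of a cuboid complex explicitly: the six incident edges of $C^+$ correspond to six vertices of the link graph, and the twelve incident faces of $C^+$ correspond to twelve edges of the link graph, with two of the six edges at $v$ bounding a common face precisely when they lie on distinct coordinate axes. This identifies the link graph at $v$ in $C^+$ with the complement of a perfect matching on six vertices, namely the double wheel $W^2$, which is $4$-regular and readily seen to be $3$-connected.

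Next I would argue that the contraction from $C^+$ to $C^t$ does not modify this link graph. Every edge (respectively face) of $C^+$ incident with $v$ contains $v\in C$, so it is not disjoint from $C$, and therefore is not contracted; the only effect of the contraction on such an edge or face is to identify its remote endvertices (those lying in $C^+\setminus C$) with $t$. Since the link graph at $v$ only records how edges and faces at $v$ are incident with one another and is insensitive to the identity of their remaining vertices, the link graph at $v$ in $C^t$ is again isomorphic to $W^2$, hence $3$-connected.

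Combined with Lemma \ref{3-conn}, this yields that $C^t$ is locally $3$-connected. The only substantive step here was Lemma \ref{3-conn}, which we already have in hand; the remainder is a routine inspection of the interior link graph of a cuboid complex and of its stability under a contraction that happens far from the vertex in question. Consequently I do not anticipate a real obstacle in this corollary.
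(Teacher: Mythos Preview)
Your proposal is correct and follows essentially the same approach as the paper: invoke Lemma~\ref{3-conn} for the vertex $t$, and observe that every other vertex has link graph equal to the double wheel $W^2$, which is $3$-connected. The paper states this last fact without justification, whereas you supply the (straightforward) reasoning that vertices of $C$ are interior in $C^+$ and that the contraction leaves their link graphs untouched; this extra detail is sound and does not diverge from the paper's argument.
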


\begin{proof}
The link graph at $t$ in $C^t$ is $3-$connected by Lemma \ref{3-conn}. The link graphs at all other vertices are all equal to the double wheel graph, which is $3-$connected as well, which proves the claim. 
\end{proof}

Now, by Observation \ref{Ob3}, Corollary \ref{locally3-conn}, Lemma \ref{iso} and (\citep{JC1}, Section 4) we deduce that $C^t$, just like any other simply connected and locally $3-$connected $2-$complex embeddable in $3-$space, has a unique embedding in $\mathbb S^3$ up to homeomorphism.

\begin{corollary}\label{uniqueC}
The $2-$complex $C$ has a unique embedding in $3-$space up to homeomorphism.
\end{corollary}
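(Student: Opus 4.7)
The plan is to leverage the uniqueness of the embedding of $C^t$ just established by showing that every embedding $\iota$ of $C$ in $\mathbb{S}^3$ extends to an embedding $\iota^t$ of $C^t$. Granted this, given two embeddings $\iota_1,\iota_2$ of $C$, the extensions $\iota_1^t,\iota_2^t$ are equivalent via some self-homeomorphism $h$ of $\mathbb{S}^3$ with $h\circ \iota_1^t=\iota_2^t$, and restricting $h$ to $\iota_1(C)$ yields $h\circ \iota_1=\iota_2$, realising the desired equivalence of the two embeddings of $C$.

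For the extension step, I would first observe that $C$ contains the outer boundary $2$-sphere $S$ of the cuboid as a subcomplex, and that $C^t\setminus C$ is combinatorially a cone over $S$ with apex $t$: every boundary vertex of $C$ is joined to $t$ by the edges arising from contraction of the outer shell of $C^+$, every boundary edge of $C$ lies on a triangular face of $C^t$ containing $t$, and every boundary face of $C$ is similarly matched with cells incident to $t$. For any embedding $\iota$ of $C$, the image $\iota(S)$ is a piecewise-linear $2$-sphere, so by the piecewise-linear Schoenflies theorem it bounds two closed $3$-balls $B^+,B^-$. A combinatorial propagation argument---based on the fact that at each face-center vertex of $S$ the link graph of $C$ is the wheel $W_4$, which is $3$-connected and hence has a unique planar rotation system by Whitney's theorem---shows that $\iota(C\setminus S)$ lies entirely in one of the two balls, say $B^-$. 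I then place $t$ at an interior point of $B^+$ and realise the cone $C^t\setminus C$ inside $B^+$. Since $L_t(C^t)$ is $3$-connected by Lemma \ref{3-conn} and planar (being the link graph at a vertex of the embeddable $2$-complex $C^t$), Whitney's theorem forces its embedding on $\iota(S)=\partial B^+$ up to reflection, and the combinatorial incidences of $C^t$ then determine the cone extension without ambiguity.

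The main obstacle is the propagation claim that $\iota(C\setminus S)$ lies on a single side of $\iota(S)$. The subtlety is that the edge and corner vertices of the cuboid have link graphs in $C$ that are only $2$-connected (namely $K_4$ minus an edge and $K_3$, respectively), so their rotations are not rigidly forced in isolation. Nevertheless, the $3$-connected link graphs at the face-center vertices of $S$ pin down the side of $\iota(S)$ for all interior faces adjacent at such vertices, and the connectedness of $C\setminus S$ as a $2$-complex propagates this sidedness across the (individually flexible) edge and corner vertices. Making this propagation rigorous is, to my mind, the technical heart of the argument.
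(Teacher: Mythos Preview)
Your overall strategy---extend an arbitrary embedding $\iota$ of $C$ to an embedding of $C^t$ by coning off over the boundary sphere in the ``empty'' complementary ball, then invoke the uniqueness result for $C^t$---is exactly the paper's approach. The difference lies in how you handle the sidedness claim.

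You flag as the ``technical heart'' the assertion that $\iota(C\setminus S)$ lies in a single component of $\mathbb{S}^3\setminus\iota(S)$, and you propose to prove it by a combinatorial propagation argument through $3$-connected link graphs at face-centre vertices, worrying that the merely $2$-connected links at edge and corner vertices might cause trouble. This is a red herring. The paper dispatches the claim in one line: the point set $C\setminus S$ is connected as a topological space (every interior face meets an interior edge, every interior edge meets an interior vertex, and the interior vertices are joined by interior edges), so its continuous image $\iota(C)\setminus\iota(S)$ is connected and therefore lies entirely in one of the two components of $\mathbb{S}^3\setminus\iota(S)$. No link-graph analysis is required; the low connectivity of the links at edge and corner vertices was precisely the reason for passing to $C^t$ in the first place, but once that construction is in hand, the extension step for $C$ is purely topological. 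Your proposed propagation argument would also work, but it is considerably more laborious than necessary, and you do not actually carry it out.
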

\begin{proof}
Let $\iota$ be an embedding of $C$ in $3-$space. Consider the subcomplex $C_1$ of $C$ induced by the vertices of $C$ with coordinates (taken with respect to the canonical embedding of $C$) in the set 

\begin{equation*}
    \Big\{(x, y, z)| \hspace{0.4em} x\in \{0, 2n+1\}\Big\}\bigcup  \Big\{(x, y, z)| \hspace{0.4em} y\in \{0, n\}\Big\}\bigcup  \Big\{(x, y, z)| \hspace{0.4em} z\in \{0, n\}\Big\}.
\end{equation*}

These are roughly the "boundary vertices" of $C$ in its canonical embedding. Thus $\iota(C_1)$ is a piecewise linear embedding of the $2-$sphere in $3-$space. Now notice that $\mathbb S^3\backslash \iota(C_1)$ has two connected components. Moreover, as $\iota(C)\backslash \iota(C_1)$ is connected, it must lie entirely in one of the two connected components of $\mathbb S^3\backslash \iota(C_1)$. Adding a vertex $t$ to the connected component disjoint from $\iota(C)$ allows us to construct an embedding of $C^t$ in $3-$space. However, this embedding is unique up to homeomorphism of $\mathbb S^3$. We deduce that $C$ also has a unique embedding in $3-$space up to homeomorphism of $\mathbb S^3$.
\end{proof}

We are ready to prove Lemma \ref{embedC'}.
\begin{proof}[Proof of Lemma \ref{embedC'}.]
Every embedding of the $2-$complex $C'$ comes from an embedding of $C$ by subdividing some of the faces of $C$ with the edges of $T'$. By Corollary \ref{uniqueC} there is a unique embedding of $C$ in $3-$space up to homeomorphism. Thus $C'$ has a unique embedding in $3-$space up to homeomorphism as well.
\end{proof}

Towards the proof of \autoref{Thm 2}, we prove the following lemma:
\begin{lemma}\label{main L}
Every cycle $o$ of $C'$ that is a nontrivial knot in the canonical embedding of $C'$ is a nontrivial knot in any embedding of $C'$.
\end{lemma}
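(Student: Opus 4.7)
The plan is a short argument that reduces the statement to Lemma \ref{embedC'} combined with the classical fact that self-homeomorphisms of $\mathbb{S}^3$ preserve knot type up to mirror reflection. First, let $\iota_0$ denote the canonical embedding of $C'$ in $\mathbb{S}^3$, and let $\iota$ be an arbitrary embedding of $C'$ in $\mathbb{S}^3$. By Lemma \ref{embedC'}, there is a homeomorphism $h \colon \mathbb{S}^3 \to \mathbb{S}^3$ such that $\iota = h \circ \iota_0$. Applying this equality on a cycle $o$ of $C'$ yields $\iota(o) = h(\iota_0(o))$, so the two embedded cycles are carried onto one another by a global self-homeomorphism of the ambient sphere.

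Second, I would invoke the standard fact that two knots in $\mathbb{S}^3$ related by a self-homeomorphism of $\mathbb{S}^3$ are either equivalent (when the homeomorphism is orientation-preserving) or differ by a mirror reflection (when it is orientation-reversing). Since the trivial knot coincides with its own mirror image, nontriviality of a knot is preserved in both cases. Hence $\iota(o)$ is a nontrivial knot precisely when $\iota_0(o)$ is, which is exactly the conclusion of the lemma.

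The essentially only obstacle has already been resolved in the proof of Lemma \ref{embedC'}: once uniqueness of the embedding of $C'$ up to ambient homeomorphism of $\mathbb{S}^3$ is available, the passage from the canonical embedding to an arbitrary one is immediate from the very definition of equivalence of knots. No further topological input is required in the present lemma beyond what has already been set up, and in particular one does not need to analyse the combinatorics of $T'$ or of the cuboid complex $C$ again at this stage.
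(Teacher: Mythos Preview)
Your proof is correct and follows the same approach as the paper: combine Lemma~\ref{embedC'} with the fact that a self-homeomorphism of $\mathbb{S}^3$ preserves nontriviality of knots (the paper packages this last step as Corollary~\ref{cor 3.9}). If anything, your justification via ``equivalent or mirror image, and the unknot is amphichiral'' is more elementary than the paper's route through the Gordon--Luecke theorem in Lemma~\ref{L 3.8}.
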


First we need one more lemma and a corollary. 

\begin{lemma}\label{L 3.8}
Let $\psi: \mathbb S^3\longrightarrow \mathbb S^3$ be a homeomorphism of the $3-$sphere. Let 
$\gamma$ be a trivial knot in $\mathbb S^3$. Then the knot $\psi(\gamma)$ is trivial.
\end{lemma}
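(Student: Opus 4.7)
The plan is to use the classical characterisation of the trivial knot: a tame knot $\gamma$ in $\mathbb{S}^3$ is trivial if and only if it bounds a (tamely) embedded $2$-disk in $\mathbb{S}^3$. This is essentially the definition of the unknot, but it is the property that transports well under homeomorphisms of $\mathbb{S}^3$, which is exactly what we need.

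First, since $\gamma$ is a trivial knot, I pick an embedded closed $2$-disk $D \subset \mathbb{S}^3$ with $\partial D = \gamma$. Next, I push $D$ forward by $\psi$: because $\psi$ is a homeomorphism of $\mathbb{S}^3$, the image $\psi(D)$ is again an embedded closed $2$-disk in $\mathbb{S}^3$, and by functoriality of the boundary operator $\partial \psi(D) = \psi(\partial D) = \psi(\gamma)$. Hence $\psi(\gamma)$ also bounds an embedded $2$-disk in $\mathbb{S}^3$, so by the same characterisation $\psi(\gamma)$ is a trivial knot.

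If one prefers to avoid quoting the spanning-disk characterisation, an equivalent argument goes through fundamental groups: the knot complements $\mathbb{S}^3 \setminus \gamma$ and $\mathbb{S}^3 \setminus \psi(\gamma) = \psi(\mathbb{S}^3 \setminus \gamma)$ are homeomorphic via $\psi$, so they have isomorphic fundamental groups, and triviality of a knot is detected by $\pi_1$ of its complement being $\mathbb{Z}$.

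There is essentially no obstacle here; the only point to be slightly careful about is tameness, which is automatic in our piecewise linear setting (as already noted in the paper just before the definition of thickenings of knots), so that the disk $D$ and its image $\psi(D)$ genuinely give unknotting disks rather than pathological Alexander-horned-sphere-type configurations.
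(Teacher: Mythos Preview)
Your proof is correct, and it takes a genuinely different---and considerably lighter---route than the paper. The paper argues via knot complements: it thickens $\gamma$ to a solid torus $D$, observes that $\psi$ restricts to a homeomorphism $\mathbb{S}^3\setminus D \to \mathbb{S}^3\setminus \psi(D)$, and then invokes the Gordon--Luecke Theorem (knots are determined by their complements) to conclude that $\psi(\gamma)$ has the same knot type as $\gamma$. Your spanning-disk argument bypasses all of this: pushing forward a Seifert disk for the unknot is enough, and your alternative $\pi_1$-argument only needs Papakyriakopoulos' unknotting theorem rather than Gordon--Luecke. Either of your arguments is more economical; the paper's route has the incidental advantage that it literally shows the knot \emph{type} is preserved (not just triviality), though that extra strength is not used anywhere. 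The one delicate point you flag---tameness of the pushed-forward disk when $\psi$ is merely a topological homeomorphism---is handled by the same background fact the paper already appeals to just before this lemma (images of tame objects under homeomorphisms of $\mathbb{S}^3$ remain tame, ultimately via Moise), so no additional hypothesis is needed.
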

\begin{proof}
As $\gamma$ is a trivial knot, 
it has a thickening whose complement is 
homeomorphic to a solid torus. We call this thickeining $D$. By the Solid Torus Theorem (see 
\citep{Al} or \citep{JR}) the complement of $D$ -- that is, $\mathbb S^3\backslash D$ -- is a solid 
torus. As 
$\psi$ is a homeomorphism, the image $\psi(\gamma)$ of 
the knot $\gamma$ is a knot. 
By intersecting the thickening $D$ of $\gamma$ with the inverse image of a thickening of the knot 
$\psi(\gamma)$ if necessary, we may assume that additionally also $\psi(D)$ is a thicking of the 
knot $\psi(\gamma)$. 

The restriction of the homeomorphism $\psi$ to the knot complement $\mathbb S^3\backslash D$ 
is a homeomorphism to $\mathbb S^3\backslash \psi(D)$. Thus these two knot complements are 
homeomorphic. 
By the Gordon-Luecke Theorem \citep{GL}, it follows that the knots $\gamma$ and $\psi(\gamma)$ have 
the same knot type. Thus the knot $\psi(\gamma)$ must be trivial. 
\end{proof}

\begin{corollary}\label{cor 3.9}
The image of a nontrivial knot in $\mathbb S^3$ by a homeomorphism $\psi$ of the $3-$sphere is a 
nontrivial knot.
\end{corollary}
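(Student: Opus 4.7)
The plan is to derive Corollary \ref{cor 3.9} directly from Lemma \ref{L 3.8} by a contrapositive argument using the fact that the inverse of a homeomorphism of $\mathbb{S}^3$ is again a homeomorphism of $\mathbb{S}^3$.

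Concretely, I would suppose for contradiction that $\gamma$ is a nontrivial knot in $\mathbb{S}^3$ whose image $\psi(\gamma)$ under the homeomorphism $\psi$ is a trivial knot. Since $\psi$ is a homeomorphism of $\mathbb{S}^3$, its inverse $\psi^{-1}$ is also a homeomorphism of $\mathbb{S}^3$. Applying Lemma \ref{L 3.8} to the homeomorphism $\psi^{-1}$ and the trivial knot $\psi(\gamma)$, we would conclude that $\psi^{-1}(\psi(\gamma)) = \gamma$ is a trivial knot. This contradicts the assumption that $\gamma$ is nontrivial, so $\psi(\gamma)$ must be a nontrivial knot.

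There is essentially no obstacle: the whole content is packaged in Lemma \ref{L 3.8}, and the corollary is a one-line consequence once one observes that homeomorphisms of $\mathbb{S}^3$ form a group. The only thing to double-check is that $\psi(\gamma)$ is indeed a knot, i.e.\ a piecewise linear (or at least tame) embedded circle, but this is immediate since $\psi$ is a homeomorphism and $\gamma$ is already a knot, so $\psi\circ \gamma$ is an embedding of $\mathbb{S}^1$ into $\mathbb{S}^3$ whose tameness is preserved by the implicit conventions (and in any case Lemma \ref{L 3.8} handles the relevant thickening issue).
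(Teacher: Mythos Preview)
Your proof is correct and matches the paper's approach exactly: the paper's proof is the single sentence ``This is the contraposition of Lemma \ref{L 3.8} applied to $\psi^{-1}$,'' which is precisely what you have written out in detail.
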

\begin{proof}
This is the contraposition of Lemma \ref{L 3.8} applied to $\psi^{-1}$.
\end{proof}

We are now ready to prove Lemma \ref{main L}.
\begin{proof}[Proof of Lemma \ref{main L}]
This is a direct consequence of Lemma \ref{embedC'} and Corollary \ref{cor 3.9}.
\end{proof}

We are now able to complete the proof of \autoref{Thm 2}. It remains to prove that the spanning tree $T'$ of the 
$1-$skeleton of $C'$ is entangled (recall that this means that each of its fundamental cycles forms a nontrivial knot in any embedding of $C'$ in $3-$space). 

\begin{proof}
Consider an edge $e$ in $E'\backslash E(T')$. By Lemma \ref{L 3.6} the fundamental cycle $o_e$ 
of $T'$ is nontrivially knotted in the canonical embedding of $C'$. By Lemma \ref{embedC'} any two embeddings of $C'$ in $3-$space are homeomorphic, so applying Lemma 
\ref{main L} to $o_e$ gives that $o_e$ forms a nontrivial knot in every embedding of $C'$ in 
$3-$space. As this holds for every edge in $E'\backslash E(T')$ the proof of \autoref{Thm 2} 
is complete.
\end{proof}

\section{Proof of Lemma \ref{sec4lemma}}\label{Section4}
Let us consider the $2-$complex $C'$ and the spanning tree $T'$ of the 
$1-$skeleton of $C'$ as in \autoref{Thm 2}. We recall that the $2-$complex $C'' = (V'', E'', F'')$ is obtained by contraction of the  spanning tree $T'$ of the $1-$skeleton of $C'$. Let us consider an embedding $\iota'$ of the 
$2-$complex $C'$ in $3-$space. By Observation \ref{Ob1} contractions of edges with different endvertices preserve 
embeddability and can be performed within $\iota'$. Therefore contracting the edges of the tree $T'$ one by one
within $\iota'$ induces an embedding $\iota''$ of $C''$ in which every 
edge forms a nontrivial knot. The goal of this section is to justify that every embedding of $C''$ in $3-$space can be obtained this way.\\

We recall that for a $2-$complex $C _1 = (V_1, E_1, F_1)$, the \textit{link graph} 
$L_v(C _1)$ at the vertex $v$ in $C_1$ is the incidence graph between edges and 
faces incident with $v$ in $C _1$.  

Below we aim to show that every planar rotation system of the $2-$complex $C''$ arises from 
a planar rotation system of the $2-$complex $C'$. We begin by proving that contractions of 
edges of a $2-$complex commute with each other.

\begin{lemma}\label{comm}
Let $e_1, e_2, \dots , e_k$ be edges of a $2-$complex $C_1$. The link graphs at the 
vertices of the $2-$complex $C_1/\{e_1, e_2, \dots e_k\}$ do not depend on the order in which 
the edges $e_1, e_2, \dots , e_k$ are contracted. 
\end{lemma}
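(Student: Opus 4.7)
The plan is to reduce the statement to the case $k=2$ and then perform a short case analysis on how $e_1$ and $e_2$ meet. Since any permutation of $\{e_1,\ldots,e_k\}$ can be written as a product of adjacent transpositions, it suffices to prove that for any two edges $e_1$ and $e_2$ of a $2$-complex $C_1$, the link graphs of $C_1/\{e_1,e_2\}$ obtained by contracting in the order $e_1,e_2$ coincide vertex by vertex with those obtained by contracting in the order $e_2,e_1$. Once commutativity of any two contractions is known, a straightforward induction on $k$ finishes the proof.

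For $k=2$ I would split into cases according to the endvertex structure of $e_1=u_1v_1$ and $e_2=u_2v_2$. The benign case is when $\{u_1,v_1\}\cap\{u_2,v_2\}=\emptyset$: the two contractions act on disjoint portions of the link-graph data, so they manifestly commute and the link graphs at all vertices are the same in either order. The remaining cases are when the edges share exactly one endvertex, when they are parallel (sharing two endvertices), and when one or both are loops; these need to be treated because the link graphs at vertices which persist through the contraction are unaffected, so the only question is about the link graph at the vertex produced by the contractions.

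In each interesting case I would describe the link graph of the new vertex using the vertex-sum operation from Section~\ref{sec2}: contracting a non-loop edge $e=uv$ deletes the vertices of $L_u(C_1)$ and $L_v(C_1)$ labelled by $e$ and glues the remaining graphs by identifying, for each face $f$ incident with $e$, the face-vertex labelled $f$ in $L_u(C_1)$ with the face-vertex labelled $f$ in $L_v(C_1)$, while contracting a loop $e$ at $v$ merely deletes the loop from $L_v(C_1)$ and glues the two endpoints of that loop. In the case that $e_1$ and $e_2$ share one endvertex, both orders of contraction yield the same tripartite vertex sum of $L_{u_1}(C_1)$, $L_{v_1}(C_1)=L_{u_2}(C_1)$ and $L_{v_2}(C_1)$; the parallel-edge case and the loop cases follow similarly once one checks that the deletions and face identifications commute.

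The main obstacle is the bookkeeping in the mixed cases, especially when a face $f$ of $C_1$ has both $e_1$ and $e_2$ in its boundary, so that the face-vertex labelled $f$ must be identified across up to three link graphs. The key observation to push through this bookkeeping is that the final identifications and deletions depend only on the unordered data \emph{(set of contracted edges, incidence of faces with these edges)}, not on any order in which the operations are carried out. I expect that once this observation is recorded carefully and the four cases above are dispatched, the commutativity of contractions, and hence the statement of the lemma, follows without further difficulty.
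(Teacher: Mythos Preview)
Your strategy---reduce to adjacent transpositions and then do a case analysis on how two edges meet---would eventually work, but it is far more elaborate than necessary. The paper's proof is essentially one line: the quotient $2$-complex $C_1/\{e_1,\ldots,e_k\}$ is itself well defined independently of the order of contraction (its vertices are the connected components of the graph $(V_1,\{e_1,\ldots,e_k\})$, its edges are $E_1\setminus\{e_1,\ldots,e_k\}$ with the obvious incidences, and its faces are those of $C_1$ with the $e_i$ suppressed from their boundary words). Since link graphs are determined by the ambient $2$-complex, their independence of order follows immediately. Your approach works \emph{at the level of link graphs}, tracking vertex sums through each contraction; the paper works one level up and gets the conclusion for free.

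There is also a technical slip in your description of the link-graph operations. In this paper the vertices of $L_v(C_1)$ correspond to (half-)edges incident with $v$ and the \emph{edges} of $L_v(C_1)$ correspond to face-corners at $v$; there is no ``face-vertex labelled $f$''. Accordingly, the vertex sum arising from contracting a non-loop edge $e=uv$ deletes the vertex of $L_u$ and the vertex of $L_v$ labelled $e$ and adds, for each face $f$ containing $e$, a new edge joining the two neighbours of those deleted vertices along the link-graph edges labelled $f$---it does not identify vertices. Likewise, your description of loop contraction does not match the paper's ``internal vertex sum''. These inaccuracies would have to be corrected before your case analysis could go through; once you step back to the level of the $2$-complex, as the paper does, they become irrelevant.
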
 
\begin{proof}
It is sufficient to observe that the $2-$complex $C_1/\{e_1, e_2,\dots, e_k\}$ is well defined 
and does not depend on the order of contraction of the edges $e_1, e_2, \dots, e_k$.
\end{proof}

\begin{lemma}\label{L 4.3}
Let $C _1 = (V_1, E_1, F_1)$ be a locally $2-$connected $2-$complex and let $e$ be an edge 
of $C _1$ that is not a loop. Then every planar rotation system of the $2-$complex 
$C _1/e$ is induced by a planar rotation system of $C _1$.
\end{lemma}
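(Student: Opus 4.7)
The plan is to construct a planar rotation system $\sigma'$ on $C_1$ that restricts to the given planar rotation system $\sigma$ on $C_1/e$ upon contracting $e$. Denote by $u$ and $v$ the two distinct endvertices of $e$ and by $w$ the vertex they merge into in $C_1/e$. For every edge $f \neq e$ of $C_1$, the edge $f$ survives in $C_1/e$ and the set of its incident faces is canonically preserved; I therefore set $\sigma'_f := \sigma_f$. The nontrivial task is to define $\sigma'_e$, namely a cyclic order of the faces $F_1, \dots, F_k$ incident with $e$ in $C_1$.

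The key observation is that the link graph $L_w(C_1/e)$ admits an explicit description in terms of $L_u(C_1)$ and $L_v(C_1)$: it is obtained by deleting the edge-vertex representing $e$ from each of them (together with its incident link-graph edges) and then identifying, for each $i$, the face-vertex representing $F_i$ in $L_u(C_1)$ with its counterpart in $L_v(C_1)$. Local 2-connectivity guarantees that each of $L_u(C_1)$ and $L_v(C_1)$ is 2-connected, so $e$ has degree at least two in both, forcing $k \geq 2$. The planar rotation system $\sigma$ on $C_1/e$ in particular gives a planar embedding of $L_w(C_1/e)$ on $\mathbb S^2$, and this is the data from which $\sigma'_e$ will be read off.

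I would then show that all of $F_1, \dots, F_k$ lie on the boundary of a single face $\Phi_u$ of the subembedding of $L_u(C_1) \setminus \{e\}$ inherited from the planar embedding of $L_w(C_1/e)$. The informal reason is that when the vertex $e$ is removed from a planarly embedded 2-connected graph $L_u(C_1)$, the faces incident with $e$ merge into a single new face $\Phi_u$ whose boundary visits each neighbor $F_i$ of $e$ exactly once. The symmetric statement produces a face $\Phi_v$ of $L_v(C_1) \setminus \{e\}$, and by planarity of $L_w(C_1/e)$ and the description above, the subembedding of $L_v(C_1) \setminus \{e\}$ sits entirely inside $\Phi_u$ and vice versa. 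I then define $\sigma'_e$ to be the cyclic order in which $F_1, \dots, F_k$ are encountered along $\partial \Phi_u$, and check using the analogous picture from $\Phi_v$ that this is the same cyclic order as the one visible from $L_v(C_1)$, so $\sigma'_e$ is consistently defined. Finally, inserting a new edge-vertex $e$ into $\Phi_u$ and joining it to each $F_i$ along $\partial \Phi_u$ gives a planar embedding of $L_u(C_1)$ whose rotation around $e$ is $\sigma'_e$; the same works for $L_v(C_1)$. For any vertex $z \notin \{u, v\}$, $L_z(C_1)$ coincides with $L_z(C_1/e)$ together with its rotation, which is planar by hypothesis, so nothing more needs to be done.

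The main obstacle, I expect, is justifying the claim that the $F_i$'s all lie on the boundary of a common face $\Phi_u$ of the inherited subembedding of $L_u(C_1) \setminus \{e\}$, together with the complementary statement that $L_v(C_1) \setminus \{e\}$ lies inside $\Phi_u$. This is precisely where the hypothesis of local 2-connectivity is essential: without it, $e$ could be a cut vertex of some link graph, the merged face $\Phi_u$ would degenerate, and the $F_i$'s could be scattered across several faces of the subembedding, so no consistent cyclic order $\sigma'_e$ could be extracted. Executing this step rigorously likely requires combining a topological argument (tracking the face that appears when a vertex of a 2-connected planar graph is deleted) with the explicit combinatorial description of $L_w(C_1/e)$ as a vertex sum of $L_u(C_1)$ and $L_v(C_1)$ with the face-vertices at $e$ identified.
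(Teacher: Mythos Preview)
The paper's proof is not self-contained: it observes that $2$-connectivity of $L_x(C_1)$ and $L_y(C_1)$ means the vertex corresponding to $e$ is not a cutvertex in either link graph, and then cites Lemma~2.2 of \citep{JC1} for the conclusion. Your proposal is essentially a direct reproof of that cited lemma, so the route differs only in being self-contained rather than a citation. Your outline is correct, and the obstacle you flag is smaller than you fear: in the paper's vertex-sum model the two pieces $L_u\setminus\{e\}$ and $L_v\setminus\{e\}$ sit vertex-disjointly inside $L_w$, joined only by the matching edges coming from the faces $F_i$; since $L_v$ is $2$-connected, $L_v\setminus\{e\}$ is connected and therefore lies in a single face $\Phi_u$ of the induced subembedding of $L_u\setminus\{e\}$, and every neighbour of $e$ in $L_u$ is forced onto $\partial\Phi_u$ by its matching edge into $\Phi_u$. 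Note that your ``informal reason'' as written is circular, since it presupposes a planar embedding of $L_u$, which is precisely what you are constructing; the argument just sketched avoids this. (Your description of $L_w$ via \emph{identifying} the face-vertices $F_i$ across the two sides differs from the paper's vertex-sum description only by subdividing $k$ edges, so planarity and the argument are unaffected, though the vertex-sum picture makes the disjointness step cleaner.)
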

\begin{proof}
Let $e = xy$ for $x, y\in V_1$. As the link graphs at $x$ and $y$ are $2-$connected, the vertices 
corresponding to the edge $e$ in the two link graphs $L_x(C _1)$ and $L_y(C _1)$ 
are not cutvertices. Under these conditions (\citep{JC1}, Lemma 2.2) says that every planar
rotation system of $C _1/e$ is induced by a planar rotation system of $C _1$.
\end{proof}

\begin{observation}\label{2-conn'}
Subdivisions of $2-$connected graphs are $2-$connected.
\end{observation}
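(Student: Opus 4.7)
The plan is to verify the defining property of $2-$connectedness directly for an arbitrary subdivision $H$ of a $2-$connected graph $G$. Since $G$ has at least three vertices by hypothesis and subdivision only adds vertices, $H$ also has at least three vertices, so it remains to show that $H - v$ is connected for every vertex $v$ of $H$. I would carry this out by splitting on whether or not $v$ is a branch vertex, i.e.\ a vertex inherited from $G$.

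If $v \in V(G)$, then $H - v$ is obtained from a subdivision of the connected graph $G - v$ by attaching, at each neighbour of $v$ in $G$, a pendant subpath corresponding to the truncated remainder of the path that replaced the edge from $v$ to that neighbour. Each such pendant subpath is attached at a vertex of $G - v$, so $H - v$ is connected. In the remaining case, $v$ is an internal vertex of the subdivision path $P_e$ replacing some edge $e = xy$ of $G$, and then $v$ has degree two in $H$; deleting it splits $P_e$ into two pendant subpaths attached at $x$ and $y$ respectively. Because $G$ is $2-$connected with at least three vertices, $e$ is not a bridge of $G$, so $G - e$ is connected, and hence its subdivision inside $H$ is a connected subgraph of $H - v$ containing both $x$ and $y$; re-attaching the two pendant subpaths at $x$ and $y$ keeps the graph connected.

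I do not anticipate any real obstacle: the argument is entirely case-based and uses only the definitions. The single subtle point is the observation that a $2-$connected graph on at least three vertices is also $2-$edge-connected, which is what justifies that the edge $e$ is not a bridge in the second case. Alternatively, one could give a uniform proof by appealing to the characterisation via two internally disjoint paths (Menger's theorem): any two vertices of $H$ lie on two internally disjoint paths, obtained from the corresponding pair of paths in $G$ by subdividing them, which immediately yields $2-$connectedness.
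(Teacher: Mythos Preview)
Your proof is correct and follows essentially the same approach as the paper: both arguments split on whether the deleted vertex is an original vertex of $G$ or a subdivision vertex, reducing to the connectedness of $G\setminus v$ or $G\setminus e$ respectively (the paper phrases the reduction via edge contractions rather than ``subdivision plus pendant paths'', but this is the same idea). If anything, your version is slightly more explicit, since you justify why $e$ is not a bridge via the implication from $2$-connectedness to $2$-edge-connectedness, a point the paper's proof leaves tacit.
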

\begin{proof}
Let $G$ be a $2-$connected graph and $G'$ be a subdivision of $G$. Let $v'$ be a vertex of $G'$. If the vertex $v'$ is present in $G$, then $G\backslash v'$ can be obtained from $G'\backslash v'$ by a sequence of edge contractions, so in particular $G'\backslash v'$ is connected. If the vertex $v'$ is not present in $G$ and participates in the subdivision of the edge $e$ of $G$, then $G\backslash e$ can be obtained from $G'\backslash v'$ by a sequence of edge contractions, so $G'\backslash v'$ is connected. 
\end{proof}

We now state and prove an easy but crucial observation.
\begin{observation}\label{2-conn}
The $2-$complexes $C$ and $C'$ are locally $2-$connected.
\end{observation}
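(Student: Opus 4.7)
The plan is to verify local $2$-connectedness of the cuboid complex $C$ by a short case analysis on the position of a vertex inside the cuboid, and then transfer the property to $C'$ by observing that each of its link graphs is obtained from the corresponding link graph of $C$ by edge subdivisions, so that Observation \ref{2-conn'} applies.

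For $C$, I would classify a vertex $v$ according to how many of its coordinates lie on the boundary of the $(2n+1)\times n\times n$ cuboid. At a corner vertex (three boundary coordinates) the three incident edges are pairwise joined through the three incident square faces, so the link graph is a triangle. At a vertex in the relative interior of an edge of the boundary (two boundary coordinates) the four incident edges and five incident faces yield the graph $K_4$ minus an edge already encountered in the paper above Lemma \ref{embedC'}. At a vertex in the relative interior of a face of the boundary (one boundary coordinate) the five incident edges and eight incident faces form a wheel $W_4$. Finally, at a fully interior vertex the six incident edges and twelve incident faces form the octahedron $K_{2,2,2}$. Each of these four graphs is immediately checked to be $2$-connected, which establishes that $C$ is locally $2$-connected.

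For $C'$, I would fix a vertex $v$ and compare $L_v(C')$ with $L_v(C)$ by analysing the effect of subdividing each incident square face $f$ by a diagonal edge $d$ of $T'$. Since every diagonal edge has both endpoints of the same colour, one of two things happens for each subdivided $f$: either $d$ is not incident with $v$, in which case the unique triangle of the subdivision that contains $v$ is incident with the same pair of edges at $v$ as $f$ was, and the corresponding edge of $L_v(C)$ is preserved; or $d$ is incident with $v$, in which case the two new triangles together replace that edge of $L_v(C)$ by a path of length two through a new vertex corresponding to $d$. In both cases $L_v(C')$ arises from $L_v(C)$ by a (possibly empty) sequence of edge subdivisions, and Observation \ref{2-conn'} then yields that $L_v(C')$ is $2$-connected, completing the proof.

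No serious obstacle is foreseen; the only point meriting attention is the combinatorial bookkeeping in the second case above, where one has to check that the two new triangles together contribute precisely a subdivision of the original link-graph edge. This follows from the fact that each of them shares the new diagonal $d$ and contains exactly one of the two edges of $f$ incident with $v$, so that between the two old link-graph endpoints one introduces exactly the new vertex $d$ along a length-two path.
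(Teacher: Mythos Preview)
Your proof is correct and follows essentially the same approach as the paper: a case analysis on the vertices of $C$ yielding the four link graphs $K_3$, $K_4\setminus e$, $W_4\cong W^2\setminus w$, and $K_{2,2,2}\cong W^2$, followed by the observation that passing to $C'$ only subdivides link-graph edges so that Observation~\ref{2-conn'} applies. The only differences are cosmetic---you classify by the number of boundary coordinates while the paper classifies by degree, you use different names for the same graphs, and you spell out the subdivision argument for $C'$ in more detail than the paper does.
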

\begin{proof}
As the link graphs at the vertices of $C'$ are subdivisions of the link graphs at the vertices of $C$ (to construct $C'$ we only add new edges subdividing already existing faces of $C$), by Observation \ref{2-conn'} it is sufficient to prove the observation for the $2-$complex $C$.\par
By \textit{degree of a vertex $v$} in $C$ we mean the number of edges of $C$ incident to $v$. The link graphs at the vertex $v$ of $C$ are equal to:
\begin{itemize}
    \item The double wheel graph $W^2$ if $v$ is of degree 6.
    \item $W^2\backslash w$, where $w$ is any vertex of $W^2$, if  $v$ is of degree 5.
    \item $K_4\backslash e$, where $e$ is any edge of the complete graph $K_4$, if  $v$ is of degree 4.
    \item The complete graph $K_3$, if $v$ is of degree 3.
\end{itemize}
As each of these graphs is $2-$connected, the $2-$complex $C$ is locally $2-$connected.
\end{proof}

\begin{corollary}\label{main cor}
Every planar rotation system of $C''$ is induced by a planar rotation system of $C'$.
\end{corollary}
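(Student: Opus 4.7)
The plan is to lift a planar rotation system from $C''$ back to $C'$ one edge-contraction at a time, using Lemma \ref{L 4.3} at each step. To make the induction go through cleanly, I would actually prove the slightly strengthened statement: for every locally $2$-connected $2$-complex $D$ and every spanning tree $T$ of $Sk_1(D)$, every planar rotation system of $D/T$ is induced by a planar rotation system of $D$. The corollary then follows by applying this statement to $D = C'$, which is locally $2$-connected by Observation \ref{2-conn}, and $T = T'$.

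The induction is on $|E(T)|$. The base case $E(T) = \emptyset$ is trivial since $D/T = D$. For the inductive step, pick any edge $e \in T$; since $T$ is a tree it contains no loops, so $e$ is not a loop in $D$. By Lemma \ref{comm}, the contraction $D/T$ equals $(D/e)/(T/e)$, where $T/e$ is a spanning tree of $Sk_1(D/e)$ with $|E(T)|-1$ edges. Applying the inductive hypothesis to $D/e$ and $T/e$, a given planar rotation system $\Sigma$ of $D/T$ lifts to a planar rotation system of $D/e$; applying Lemma \ref{L 4.3} to the locally $2$-connected complex $D$ and the non-loop edge $e$ lifts this further to a planar rotation system of $D$ inducing $\Sigma$, completing the induction.

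The main obstacle is ensuring the inductive hypothesis is applicable at each step, which requires $D/e$ to be locally $2$-connected. The key auxiliary lemma is therefore that contracting a non-loop edge $e=xy$ in a locally $2$-connected $2$-complex produces a locally $2$-connected $2$-complex. Link graphs at all vertices except the merged one are unchanged and hence remain $2$-connected. At the merged vertex, the new link graph is the vertex sum of $L_x(D)$ and $L_y(D)$ identified at the two vertices representing $e$; since these identified vertices are non-cutvertices of the respective $2$-connected link graphs, the vertex sum is itself $2$-connected, so $D/e$ is locally $2$-connected. With this auxiliary lemma, which is standard and likely appears in \citep{JC1}, the induction closes and the corollary follows immediately.
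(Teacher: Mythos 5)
Your proposal is correct and follows essentially the same route as the paper: an induction contracting the tree edge by edge, applying Lemma \ref{L 4.3} at each step, with local $2$-connectedness preserved throughout because vertex sums of $2$-connected graphs at non-cutvertices are $2$-connected (the paper cites \citep{JC1}, Lemma 3.4, for exactly this). Your write-up merely makes the induction and the auxiliary preservation lemma explicit where the paper states them tersely.
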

\begin{proof}
As contractions of edges commute 
by Lemma \ref{comm}, the order of contraction of the edges of the tree $T'$ is irrelevant.\par
We know that the $2-$complex $C'$ is locally 
$2-$connected and by (\citep{JC1}, Lemma 3.4) we also know that vertex sums of $2-$connected graphs are 
$2-$connected. From these two facts we deduce that the assumptions of 
Lemma \ref{L 4.3} remain satisfied after each contraction. Thus we use Lemma \ref{L 4.3} inductively by performing consecutive contractions of the edges of the 
spanning tree $T'$ of the $1-$skeleton of $C'$, which proves the corollary.
\end{proof}

\begin{lemma}\label{iso}
Let $\iota$ and $\iota'$ be two embeddings of a locally connected and simply connected $2-$complex in $3-$space 
with the same planar rotation systems. Then there is a homeomorphism $\psi$ of the $3-$sphere such 
that the concatenation of $\iota$ and $\psi$ is $\iota'$.\footnote{A consequence of this lemma is that simply connected locally 3-connected 2-complexes have unique embeddings in 3-space. This was observed independently by Georgakopoulos and Kim.} \end{lemma}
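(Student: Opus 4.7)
The plan is to promote the matching of planar rotation systems to a global homeomorphism of $\mathbb{S}^3$ in three stages: define a homeomorphism on the image of the complex, thicken it to a regular neighbourhood, and extend across the complementary regions.

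First I would set $\varphi_0 := \iota' \circ \iota^{-1}\colon \iota(C) \to \iota'(C)$, a homeomorphism between the two images of $C$. I would then promote $\varphi_0$ to a homeomorphism $\Phi\colon N \to N'$ between closed regular neighbourhoods of $\iota(C)$ and $\iota'(C)$ in $\mathbb{S}^3$. The key point is that the planar rotation systems coincide: around each edge, the prescribed cyclic order of incident faces matches, so the small cross-sectional link disks can be identified; around each vertex, the embedded link graph (determined by the induced planar rotation system) matches, so the link spheres can be identified. Local connectivity of $C$ is what ensures these local identifications glue coherently across the block decomposition of the neighbourhoods (a ball at each vertex, a prism over each edge, a slab over each face), yielding $\Phi$.

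Next I would extend $\Phi$ across the complementary components of $N$ and $N'$. The face-walks intrinsically determined by a planar rotation system give a canonical bijection between the components of $\mathbb{S}^3\setminus N$ and those of $\mathbb{S}^3\setminus N'$, and under this bijection $\Phi$ identifies the corresponding boundary $2$-spheres. The crucial claim is that each closed complementary component is a $3$-ball. Granting this, the Alexander trick (coning a boundary homeomorphism into the interior) extends $\Phi$ across each region, and gluing these extensions with $\Phi$ itself produces the desired homeomorphism $\psi$ with $\psi \circ \iota = \iota'$.

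The main obstacle is establishing that each closed complementary component is indeed a $3$-ball. Here, Alexander duality converts the simple connectivity of $C$ into the vanishing of $H_1$ of the complement $\mathbb{S}^3\setminus \iota(C)$, and hence of each of its components. One then upgrades this to simple connectivity of each component and identifies each (piecewise linear, compact) component with a $3$-ball via the PL Schoenflies theorem in dimension three. Local connectivity of $C$ enters to rule out pathological pinching behaviour at vertices, which is what ensures that the boundary of each complementary region is genuinely a tame $2$-sphere rather than a more degenerate quotient. Once this topological normalisation is in place, the assembly of $\psi$ from $\Phi$ and the Alexander trick extensions is routine.
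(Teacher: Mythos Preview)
Your proposal is correct and follows essentially the same three-stage architecture as the paper: build a homeomorphism between regular neighbourhoods (thickenings) of the two images using the matching planar rotation systems, show each complementary region is a $3$-ball, and extend across those balls by the Alexander trick. The only noteworthy difference is in how you justify the $3$-ball claim: the paper simply cites a result (Theorem~6.8 of \citep{JC2}) that the boundary of each complementary region is a $2$-sphere, then invokes Alexander's theorem, whereas you sketch an argument via Alexander duality to get $H_1=0$ for each component and then appeal to PL Schoenflies. Your route is valid but your phrase ``upgrade this to simple connectivity'' skips a genuine step (one really argues on the boundary surface: half-lives-half-dies forces genus zero, hence a sphere, and then Alexander's theorem gives the ball); the paper sidesteps this by outsourcing the $2$-sphere identification to a citation.
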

\begin{proof}
Consider thickenings\footnote{A \textit{thickening $D$} of an embedding $\iota$ of a $2-$complex in 
$3-$space is the manifold $\iota + B(0, \varepsilon)$ for $\varepsilon > 0$ such that the number of 
connected components of $\mathbb S^3 \backslash \iota$ is equal to the number of connected 
components of $\mathbb S^3 \backslash D$. Here $B(0, \varepsilon)$ is the closed $3-$ball of center 
0 and radius $\varepsilon$.} $D$ and $D'$ of the embeddings $\iota$ and $\iota '$. As these 
embeddings are assumed to be piecewise linear, $D$ and $D'$ are well defined up to homeomorphism. 
Moreover, as the planar rotation systems of $\iota$ and $\iota '$ coincide, $D$ and $D'$ are 
homeomorphic. We denote the homeomorphism between $D$ and $D'$ by $\psi$. Firstly, as the image of 
the boundary of $D$ under $\psi$ is the boundary of $D'$, $\psi$ induces a bijection between the 
connected components of $\mathbb S^3 \backslash D$ and the connected components of $\mathbb S^3 
\backslash D'$. More precisely, the connected component $B$ of $\mathbb S^3 \backslash D$ 
corresponds to the connected component $B'$ of $\mathbb S^3 \backslash D'$ for which $\psi (\partial 
B) = \partial B'$. Secondly, as the $2-$complex $C$ is simply connected and locally 
connected, all connected componnets of $\mathbb S^3 \backslash D$ and of $\mathbb S^3 \backslash D'$ 
have boundaries homeomorphic to the $2-$sphere. See for example Theorem 6.8 in \citep{JC2}. By 
Alexander's Theorem every connected component is homeomorphic to the $3-$ball.\par
Fix a pair $(B, B')$ as above. By a trivial induction argument it is sufficient to extend $\psi$ 
from $D\cup B$ to $D'\cup B'$. By performing isotopy if necessary, we have that $B$ and $B'$ are 
convex. Choosing some $b\in B$ and $b'\in B'$, we construct a homeomorphism $\overline \psi :B 
\longrightarrow B'$ as $\forall \lambda \in [0,1), \forall x\in \partial B, \overline \psi (b + 
\lambda (x-b)) = b' + \lambda (\psi(x) - b')$. Thus, $\psi \cup \overline \psi$ gives the required 
homeomorphism from $D\cup B$ to $D'\cup B'$.
\end{proof}

We are ready to prove Lemma \ref{sec4lemma} saying that every embedding of $C''$ in $3-$space is obtained from an embedding of $C'$ by contracting the tree $T'$.

\begin{proof}[Proof of Lemma \ref{sec4lemma}]
Consider an embedding $\iota''$ of the $2-$complex $C''$ in $3-$space with planar rotation system $\Sigma''$. By Corollary \ref{main cor} $\Sigma''$ is induced by a planar rotation system $\Sigma'$ of $C'$. As the $2-$complex $C'$ is simply connected and has a planar rotation system $\Sigma'$, by (\citep{JC2}, Theorem 1.1) it has an embedding $\iota'$ in $3-$space with rotation system $\Sigma'$. Contraction of the tree $T'$ in the $2-$complex $C'$ produces an embedding of $C''$ with planar rotation system $\Sigma''$, which is homeomorphic to $\iota''$ by Lemma \ref{iso}. This proves Lemma \ref{sec4lemma}.
\end{proof}

We conclude this section with two consequences of Lemma \ref{sec4lemma}.

\begin{corollary}\label{cor 4.5}
The $2-$complex $C''$ has a unique embedding in $3-$space up to homeomorphism.
\end{corollary}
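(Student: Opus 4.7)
The plan is to obtain this corollary as a direct combination of the two main results of Section \ref{Section4}: Lemma \ref{sec4lemma}, which says every embedding of $C''$ arises by contracting $T'$ inside some embedding of $C'$, and Lemma \ref{embedC'}, which says that $C'$ admits a unique embedding in $3$-space up to homeomorphism.

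Concretely, given any two embeddings $\iota_1''$ and $\iota_2''$ of the $2$-complex $C''$ in $3$-space, I would first apply Lemma \ref{sec4lemma} to each of them to obtain embeddings $\iota_1'$ and $\iota_2'$ of $C'$ such that $\iota_i''$ is the embedding produced by contracting $T'$ inside $\iota_i'$ (the contraction being carried out as in Observation \ref{Ob1}). Next, I would apply Lemma \ref{embedC'} to $\iota_1'$ and $\iota_2'$ to obtain a homeomorphism $\psi$ of $\mathbb{S}^3$ with $\psi \circ \iota_1' = \iota_2'$. The remaining task is to descend $\psi$ to a homeomorphism of $\mathbb{S}^3$ that carries $\iota_1''$ to $\iota_2''$.

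The only point that requires a little care, and which I expect to be the main (though minor) obstacle, is this descent step. Recall from the proof of Observation \ref{Ob1} that the contraction producing $\iota_i''$ from $\iota_i'$ happens inside a tubular neighborhood $D_i$ of $\iota_i'(T')$, and that $\iota_i''$ and $\iota_i'$ agree outside $D_i$. The plan is to choose the tubular neighborhoods compatibly, so that $\psi(D_1) = D_2$; this is possible because everything is piecewise linear and $\psi$ sends $\iota_1'(T')$ to $\iota_2'(T')$, so after possibly shrinking $D_1$ we may take $D_2 := \psi(D_1)$. Then $\psi$ intertwines the two contraction quotient maps on $D_1$ and $D_2$, and is the identity (as a homeomorphism of $\mathbb{S}^3$) on the complements. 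Combining these two pieces yields a homeomorphism $\widetilde{\psi}$ of $\mathbb{S}^3$ such that $\widetilde{\psi} \circ \iota_1'' = \iota_2''$, which finishes the proof.
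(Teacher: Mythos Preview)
Your proposal is correct and follows exactly the paper's route: combine Lemma \ref{embedC'} (uniqueness of the embedding of $C'$) with Lemma \ref{sec4lemma} (every embedding of $C''$ arises from one of $C'$ by contracting $T'$). The paper's own proof consists of nothing more than these two citations; your tubular-neighbourhood discussion of the descent step is more explicit than what the paper writes, and could equally be replaced by a one-line appeal to Lemma \ref{iso}, since any two contractions of $T'$ inside homeomorphic embeddings of $C'$ produce embeddings of $C''$ with the same planar rotation system.
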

\begin{proof}
By Lemma \ref{embedC'} there is a unique embedding of $C'$ in $\mathbb S^3$ up to homeomorphism. By Lemma \ref{sec4lemma} we conclude that there is a unique embedding of $C''$ in $\mathbb S^3$ up to homeomorphism as well.
\end{proof}

\begin{corollary}\label{cor 4.6}
Every embedding of the $2-$complex $C''$ in $3-$space contains only edges forming nontrivial knots.
\end{corollary}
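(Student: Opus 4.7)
The plan is to combine Lemma \ref{sec4lemma} with the entangled-tree property supplied by \autoref{Thm 2}, essentially reprising the reasoning used in the proof of \autoref{Thm 1}. First I would fix an arbitrary embedding $\iota''$ of $C''$ in $3$-space and an arbitrary edge $e''$ of $C''$. The goal is to show that $\iota''(e'')$ is a nontrivial knot.

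Next I would use Lemma \ref{sec4lemma} to lift $\iota''$ to an embedding $\iota'$ of $C'$ whose contraction along $T'$ yields $\iota''$. The edge $e''$ corresponds, under this lifting, to a unique edge $e'$ of $C'$ which does not belong to $T'$, and $\iota''(e'')$ is precisely the image of the fundamental cycle $o_{e'}$ of $e'$ in $T'$ after contracting $T'$ within $\iota'$. Since $T'$ is connected and $e'\notin T'$, Observation \ref{Ob1} guarantees that this contraction can be realised geometrically inside $\iota'$ by a sequence of tubular-neighbourhood contractions along edges that are not loops, each of which is an ambient isotopy on the closed curve $\iota'(o_{e'})$ and hence preserves its knot type.

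Now I would invoke \autoref{Thm 2}: the spanning tree $T'$ is entangled, so in the embedding $\iota'$ of $C'$ the fundamental cycle $o_{e'}$ is a nontrivial knot. Combining this with the knot-type invariance established in the previous step, $\iota''(e'')$ is a nontrivial knot. Since $\iota''$ and $e''$ were arbitrary, every edge of $C''$ is nontrivially knotted in every embedding of $C''$ in $3$-space, which is precisely the statement of the corollary.

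I do not expect any real obstacle here; the content of the corollary is essentially a repackaging of the knot-preservation argument already carried out in the proof of \autoref{Thm 1}, with Lemma \ref{sec4lemma} providing the key ingredient that every embedding of $C''$ comes from one of $C'$. The only point requiring a moment of care is ensuring that contracting the edges of $T'$ one by one inside $\iota'$ does not change the knot type of $\iota'(o_{e'})$; this is immediate from Observation \ref{Ob1} since no edge of $T'$ is a loop (as $T'$ is a tree), so each contraction can be performed within a tubular neighbourhood disjoint from the rest of $o_{e'}$, and is therefore ambient isotopic to the identity on $o_{e'}$.
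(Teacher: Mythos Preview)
Your proposal is correct and follows essentially the same approach as the paper's proof: lift the embedding via Lemma \ref{sec4lemma}, use that $T'$ is entangled from \autoref{Thm 2} to conclude the fundamental cycle of $e'$ is nontrivially knotted in $\iota'$, and observe that contracting $T'$ preserves the knot type. The paper's version is slightly terser on the knot-type preservation step, but your added justification via Observation \ref{Ob1} is in the same spirit.
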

\begin{proof}
Let $\iota''$ be an embedding of $C''$. By Lemma \ref{sec4lemma} there is an embedding $\iota'$ of $C'$ in $3-$space, which induces $\iota''$. Let $e''$ be an edge of $C''$. It corresponds to an edge $e'$ of $C'$, which is not in $T'$. As the tree $T'$ is entangled, the embedding of the fundamental cycle of $e'$ in $T'$ induced by $\iota'$ forms a nontrivial knot. Remains to notice that this knot must have the same knot type as $\iota''(e'')$. Thus for every embedding $\iota''$ of $C''$ in $3-$space and every edge $e''$ of $C''$ we have that $\iota''(e'')$ is a nontrivial knot.
\end{proof}

\section{Proof of Lemma \ref{sec5lemma}}\label{Section5}

The remainder of this paper is dedicated to the proof of Lemma \ref{sec5lemma}, which will be implied by the following lemma.

\begin{lemma}\label{rem_lem}
For every edge $e''$ of $C''$ the link graph of $C''/e''$ at its unique vertex is not planar.
\end{lemma}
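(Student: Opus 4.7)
The plan is to argue by contradiction using Corollary \ref{cor 4.6}. Suppose the link graph $L$ of $C''/e''$ at its unique vertex is planar. Since $C''/e''$ is simply connected by Observation \ref{Ob2} (loop contractions preserve simple connectedness), any planar embedding of $L$ furnishes a planar rotation system $\Sigma$ on $C''/e''$, and by (\citep{JC2}, Theorem 1.1) this rotation system is induced by an embedding $\iota_0 : C''/e'' \hookrightarrow \mathbb{S}^3$.

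From $\iota_0$, I would construct an embedding $\iota_1 : C'' \hookrightarrow \mathbb{S}^3$ in which $e''$ is realised as a small round circle, hence unknotted. This is a local ``un-contraction'' at the point $v_0 = \iota_0(v)$: in a small open ball $B$ around $v_0$, the image $\iota_0(C''/e'') \cap B$ is a standard cone, with the half-edges at $v$ realised as radial arcs and the face-corners as radial half-disks arranged cyclically according to $\Sigma$. I would insert a small circle $\gamma$ through $v_0$ in the cyclic slot of this arrangement corresponding to the face-corner identifications that were produced when contracting $e''$, and declare $\iota_1(e'') := \gamma$. For each face $f$ of $C''$ whose boundary walk contains $e''$, and for each such occurrence, I would modify the corresponding merged corner of the face of $C''/e''$ locally within $B$ by splitting it into two corners separated by a thin strip hugging $\gamma$. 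Outside of $B$, I would let $\iota_1$ agree with $\iota_0$ on all edges and faces.

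The resulting $\iota_1$ would then be an embedding of $C''$ into $\mathbb{S}^3$ in which $\iota_1(e'') = \gamma$ is a small round circle, hence an unknotted loop. This contradicts Corollary \ref{cor 4.6}, which asserts that every edge of $C''$ forms a nontrivial knot in every embedding of $C''$ in $3$-space. Therefore $L$ cannot be planar, proving Lemma \ref{rem_lem}.

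The main obstacle is making the local un-contraction step rigorous: I must show that the thin detours inserted into different faces can be arranged simultaneously around $\gamma$ without self-intersection, and that the cyclic order in which they wrap around $\gamma$ is consistent with a planar rotation system of $C''$ extending $\Sigma$. Combinatorially, this reduces to the observation that the planar embedding of $L$ records exactly the face-corner identifications effected by the loop contraction of $e''$ at $v_0$, so that splitting each such identification and inserting the two new half-edges $h_1, h_2$ of $e''$ on opposite sides of $\gamma$ always yields a planar extension of the chosen planar embedding of $L$ to a planar embedding of $L_v(C'')$. The ample three-dimensional room available near $v_0$ then allows this combinatorial extension to be realised geometrically as the embedding $\iota_1$.
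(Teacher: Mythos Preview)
Your approach is genuinely different from the paper's. The paper argues \emph{directly}: it uses the combinatorial feature built into $T'$ (three consecutive collinear diagonal edges on every fundamental cycle) to exhibit an explicit $K_{3,3}$ minor in the link graph at the contracted vertex (via the auxiliary graphs $G_{14}$ and $G_{13}$ and Lemma~\ref{minor}). You instead try to argue \emph{indirectly}, leveraging Corollary~\ref{cor 4.6}: if the link graph of $C''/e''$ were planar, then $C''/e''$ would embed, and you would ``un-contract'' to obtain an embedding of $C''$ with $e''$ a small round (hence unknotted) circle, contradicting Corollary~\ref{cor 4.6}. There is no circularity here, since Corollary~\ref{cor 4.6} does not depend on Lemma~\ref{rem_lem}.

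The gap is exactly where you flag it, and it is more serious than your closing paragraph suggests. Your construction must produce an honest embedding of $C''$; in particular the resulting link graph $L_v(C'')$ must be realised planarly on the link sphere at $v_0$, in a way that restricts to your given planar embedding of $L=L_v(C''/e'')$ outside the two new half-edge vertices $x,y$. Concretely, the merged-corner edges in $L$ are some set $S=\{x'_iy'_i\}$ of arcs on the link sphere, and you need to re-route each of them through two \emph{common} points $x$ and $y$ (one per arc-end, with the ends assigned to $x$ or $y$ by the face structure of $C''$) while keeping the drawing planar. Nothing in your argument explains why the arcs in $S$ can be simultaneously gathered to two points; they could sit in different faces of $L$, with their $x$-ends and $y$-ends interleaved in a way that obstructs any planar identification. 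Your phrase ``always yields a planar extension'' is precisely the assertion that needs proof, and the appeal to ``ample three-dimensional room'' does not help: the obstruction is the planarity of the link at $v_0$, which is a $2$-dimensional constraint.

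There is also a warning sign you should take seriously. By Corollary~\ref{cor 4.5} the complex $C''$ has a \emph{unique} embedding up to homeomorphism, and by Corollary~\ref{cor 4.6} in that embedding $e''$ is knotted. Hence there is \emph{no} embedding of $C''$ with $e''$ unknotted. So your local surgery, carried out in reality, necessarily produces self-intersections. For your proof-by-contradiction to be valid you must show that the \emph{only} possible source of this failure is the (assumed, false) planarity of $L$; as written, the surgery step is not shown to follow from that assumption, so the contradiction may simply be with an unjustified step rather than with the hypothesis. Filling this gap would amount to proving a general statement of the form ``if $L_v(D/e)$ is planar for a loop $e$, then $D$ admits an embedding with $e$ unknotted'', which is not obvious and would require at least as much work as the paper's direct $K_{3,3}$-minor argument.
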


\begin{proof}[Proof that Lemma \ref{rem_lem} implies Lemma \ref{sec5lemma}]
Consider the 2-complex $C''/e''$ for some edge $e''$ of $C''$.
By Lemma \ref{rem_lem}, the link graph at its unique vertex is not planar. Hence $C''/e''$ is not embeddable in any 3-manifold.
\end{proof}

Before proving Lemma \ref{rem_lem}, we do some preparation. \par

\begin{lemma}\label{L 5.2}
Let the graph $G$ be a vertex sum of the two disjoint graphs $G'$ and $G''$ at the vertices $x'$ and $x''$, respectively. Suppose that $G'$ is not planar and $G''$ is $2-$connected. Then, $G$ is not planar. 
\end{lemma}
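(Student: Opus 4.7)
The plan is to prove the contrapositive: assuming that $G$ is planar and $G''$ is $2$-connected, I will show that $G'$ must also be planar.

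First I would consider the subgraph $H$ of $G$ induced on the vertex set $V(G'') \setminus \{x''\}$. Inspecting the construction of the vertex sum shows that the only edges of $G$ with both endpoints in this set are the edges of $G''$ not incident to $x''$ (the vertex-sum edges $u'_i u''_i$ have one endpoint in $V(G') \setminus \{x'\}$, and edges of $G'$ not incident to $x'$ live entirely in $V(G') \setminus \{x'\}$). Hence $H$ is exactly $G'' \setminus x''$, and since $G''$ is $2$-connected, $H$ is connected.

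Next I would contract $H$ inside $G$ to a single vertex $x^*$. Because $H$ is a connected subgraph of the planar graph $G$, the resulting graph $G/H$ is planar (after deleting any loops and redundant multi-edges, which preserves planarity). Under this contraction the edges of $G$ behave as follows: edges of $G'$ not incident to $x'$ survive unchanged; edges of $G'' \setminus x''$ become loops at $x^*$ and are discarded; and each vertex-sum edge $u'_i u''_i$ becomes an edge $u'_i x^*$.

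Finally I would identify $G/H$ with $G'$ via the map sending $x^*$ to $x'$ and fixing every other vertex. Under this bijection the edge set of $G/H$ matches $E(G')$ exactly, since the neighbours of $x'$ in $G'$ are precisely $u'_1,\dots,u'_k$. Thus $G' \cong G/H$ is planar, contradicting the hypothesis of the lemma. The argument has no real obstacle; the only step requiring a moment of care is the identification of the subgraph induced on $V(G'') \setminus \{x''\}$ with $G'' \setminus x''$, and the use of $2$-connectivity of $G''$ solely to guarantee the connectedness of this induced subgraph, which is what licenses the planarity-preserving contraction.
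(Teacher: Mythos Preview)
Your proof is correct and takes essentially the same approach as the paper's: both contract the connected subgraph $G''\setminus x''$ inside $G$ to recover $G'$ as a minor, and then use that minors of planar graphs are planar. The paper phrases this directly (``contraction of edges preserves planarity'') rather than via the contrapositive, but the content is identical.
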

\begin{proof} 
As the graph $G''$ is $2-$connected, the graph
$G''\backslash x''$ is connected. Therefore by contracting the graph $G$ onto the edge set of $G'$,
we obtain the graph $G'$ (notice that contraction of a loop edge is equivalent to its deletion). 
As contraction of edges preserves planarity, if $G'$ is not planar, then $G$ is not planar as well.
\end{proof}

For a $2-$complex $C_1$ and edges $e_1, e_2, \dots, e_k$ in $C_1$ there is a 
bijection between the edges of $C_1$ different from $e_1, e_2, \dots, e_k$ and the edges of 
$C_1/\{e_1, e_2, \dots, e_k\}$. In order to increase readability, we suppress this bijection in out notation below; that is, we identify an edge $e$ of $
C_1$ different from $e_1, e_2, \dots, e_k$ with its corresponding edge of $C_1/\{e_1, e_2, \dots, e_k\}$.\par 

Let $e$ be an edge of a $2-$complex $C_1$. We aim to see how the link graphs at the 
vertices of $C_1$ relate to the link graphs at the vertices of $C_1/e$. Clearly 
link graphs at vertices not incident with the edge $e$ remain unchanged. If $e = uv$ for different 
vertices $u$ and $v$ of $C_1$, then contracting the edge $e$ leads to a vertex sum of the 
link graph at $u$ and the link graph at $v$ at the vertices $x$ and $y$ corresponding to the edge $e$. The bijection 
between their incident edges $(xx_i)_{i\leq k}$ and $(yy_i)_{i\leq k}$ is given as follows. The edge 
$xx_i$ in the link graph at $u$ corresponds to the edge $yy_i$ in the link graph at $v$ if both 
$xx_i$ and $yy_i$ are induced by the same face of $C_1$ incident to $e$.
If the edge $e$ is 
a loop with base vertex \footnote{A \textit{base vertex} of a loop edge is the only vertex this edge 
is incident with.} $v$ (i.e. $e = vv$), the link graph $L_v$ at $v$ is modified by the contraction of $e$ as follows. 

Let $x$ and $y$ be the vertices of $L_v$ corresponding to the loop edge $e$. 
Firstly, delete all edges between $x$ and $y$ in $L_v$. These edges correspond to the faces of $C_1$ having only the edge $e$ on their boundary.
Secondly, for every pair $(xx', yy')$ of edges of $L_v$ incident to the same face of $C_1$, add an edge between $x'$ and $y'$ in $L_v$. This edge might be a loop if $x'$ and $y'$ coincide.
Finally, delete the vertices $x$ and $y$ from $L_v$.\par
We call the graph obtained by the above sequence of three operations on the link graph $L_v$ \textit{internal vertex sum within the link graph $L_v$ at the vertices $x$ and $y$}. By abuse of language we also use the term \textit{internal vertex sum} for the sequence of operations itself.\par

\begin{lemma}\label{prove 5.1}
Let $o$ be a fundamental cycle of the spanning tree $T'$ of the $1-$skeleton of the $2-$complex $C'$. Contract the cycle $o$ to a vertex $\underline{o}$. Then, the link graph at the vertex $\underline{o}$ in the $2-$complex $C'/o$ is nonplanar.
\end{lemma}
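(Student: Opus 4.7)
The plan is to argue by contradiction: I assume the link graph $L$ at $\underline{o}$ in $C'/o$ is planar and construct an embedding of $C'$ in which $o$ is an unknot, contradicting the entanglement of $T'$ (\autoref{Thm 2}). The starting observation is that $o$ decomposes as a path in $T'$ concatenated with the unique edge $e'$ of $o$ outside $T'$, so that $C'/o$ coincides with $C''/e''$, where $e''$ is the loop of $C''$ corresponding to $e'$. In particular, $\underline{o}$ is the only vertex of $C'/o$, and $C'/o$ is simply connected by iterated application of Observation \ref{Ob2}.

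Assuming $L$ planar, I would fix a planar embedding of $L$ in $\mathbb{S}^2$. Since $C'/o$ has only one vertex, this planar embedding directly yields a planar rotation system $\Sigma$ of $C'/o$: the cyclic orders of incident edges at the vertices of $L$ (recorded by the planar embedding) determine cyclic orders of face-corners around the loop edges of $C'/o$. Applying (\citep{JC2}, Theorem 1.1) to the simply connected $2-$complex $C'/o$ with planar rotation system $\Sigma$ gives an embedding $\iota$ of $C'/o$ in $\mathbb{S}^3$.

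The main step is to lift $\iota$ to an embedding $\iota'$ of $C'$ in which $\iota'(o)$ is an unknot. Fix a small PL $3-$ball $B$ about $\iota(\underline{o})$ such that $\iota(C'/o)\cap B$ consists only of cones on the planar embedding of $L$ drawn in $\partial B \cong \mathbb{S}^2$. Inside $B$, I would choose a small flat $2-$disk $D$ centred at $\iota(\underline{o})$, place the $k$ vertices of $o$ on $\partial D$ in the cyclic order dictated by $\Sigma$, and join consecutive ones by sub-arcs of $\partial D$ to realise $\iota'(o) = \partial D$. The edges and faces of $C'$ incident with $o$ are then routed in $B\setminus D$, each starting from the appropriate vertex on $\partial D$ and meeting $\partial B$ at the location dictated by the planar embedding of $L$; this produces an embedding $\iota'$ of $C'$ agreeing with $\iota$ outside $B$ and with $\iota'(o)$ bounding $D$, hence an unknot. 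This contradicts the entanglement of $T'$ given by \autoref{Thm 2}, so $L$ must be nonplanar.

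The main obstacle is the lifting step: one must verify that the routings of the faces of $C'$ incident with several vertices of $o$ are mutually consistent inside $B\setminus D$ and produce a valid embedding. The flat disk $D$ furnishes a canonical skeleton for the un-contraction, and the rotation system $\Sigma$ ensures that each face meets $\partial D$ along arcs matching its corners at the vertices of $o$; once these faces are placed near $\partial D$, they extend uniquely outwards through $B\setminus D$ by a cone construction over the planar graph on $\partial B$.
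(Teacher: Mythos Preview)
Your argument contains a basic identification error that undoes the whole strategy. You write that ``$C'/o$ coincides with $C''/e''$'' and that ``$\underline{o}$ is the only vertex of $C'/o$''. This is false: $C'/o$ is obtained by contracting only the edges of the cycle $o$, whereas $C''/e'' = C'/(T'\cup\{e'\})$ additionally contracts every edge of $T'$ not on $o$. In particular $C'/o$ still carries all vertices of $C'$ that do not lie on $o$, and there are many of them. (This is exactly how the paper uses Lemma~\ref{prove 5.1}: one first contracts $o$, obtaining a complex with a nonplanar link at $\underline{o}$, and only \emph{afterwards} contracts the remaining edges of $T'$, invoking Lemma~\ref{L 5.2} at each step.)

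Because $C'/o$ has many vertices, your step ``a planar embedding of $L$ directly yields a planar rotation system $\Sigma$ of $C'/o$'' is not justified: a planar rotation system must assign to every edge a cyclic order of incident faces that is simultaneously compatible with the link graphs at \emph{both} endpoints. Knowing only that the single link graph $L$ is abstractly planar says nothing about whether a planar embedding of $L$ can be chosen whose rotators agree with planar embeddings of the (many) other link graphs of $C'/o$. So you cannot invoke \cite[Theorem~1.1]{JC2} to obtain an embedding $\iota$ of $C'/o$, and without $\iota$ the subsequent un-contraction step has nothing to act on. Even the un-contraction itself, as you acknowledge, is only sketched; making it rigorous would require controlling how faces of $C'$ incident with several vertices of $o$ interact inside $B\setminus D$, and this is not automatic.

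For comparison, the paper's proof is purely combinatorial and avoids embeddings altogether: it exploits the guaranteed three consecutive collinear diagonal edges $e_1,e_2,e_3$ on $o$ (built into Theorem~\ref{Thm 2}), computes the link graph after contracting $e_2$ explicitly as the graph $G_{14}$, and then shows that after the remaining contractions the link at $\underline{o}$ has $G_{13}$, hence $K_{3,3}$, as a minor.
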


Before proving Lemma \ref{prove 5.1} we show how Lemma \ref{L 5.2}, Lemma \ref{prove 5.1} and some results from previous sections together imply Lemma \ref{rem_lem}.

\begin{proof}[Proof that Lemma \ref{prove 5.1} implies  Lemma \ref{rem_lem}.]
Let $e''$ be an edge of the $2-$complex $C''$. It originates from an edge $e'$ of $C'$, which is not in $T'$. Thus, $e'$ participates in a fundamental cycle $o$ of $T'$. As contractions of edges of a $2-$complex commute by Lemma \ref{comm}, we obtain $C''/e''$ by first contracting the edges of $o$ in $C'$ and then the edges of $T'$ not in $o$ in $C'/o$. By Lemma \ref{prove 5.1} contracting $o$ to a vertex $\underline{o}$ in $C'/o$ leads to a nonplanar link graph at $\underline{o}$. Moreover, as the $2-$complex $C'$ is locally $2-$connected by Observation \ref{2-conn}, the link graph at every vertex of $C'/o$ except possibly $\underline{o}$ is $2-$connected. Then, by Lemma \ref{L 5.2} contraction of any non-loop edge $e = \underline{o}w$ of $C'/o$ incident to $\underline{o}$ leads to a non-planar link graph at the vertex of $C'/\{o,e\}$ obtained by identifying $\underline{o}$ and $w$. Then, contracting one by one the edges of $E(T')\backslash E(o)$ in $C'/o$ to the vertex $\underline{o}$ and applying consecutively Lemma \ref{L 5.2} we deduce that the link graph at the only vertex of $C''/e''$ is not planar. (Here by abuse of notation we denote by $\underline{o}$ the vertex at which the link graph is not planar after each following contraction. In this sense $\underline{o}$ is also the only remaining vertex in $C''/e''$.)
\end{proof}

The aim of this section from now on will be to prove Lemma \ref{prove 5.1}.\par

Let $G_{14}$ be the graph depicted on the left of \autoref{New}. Formally its vertex set is
\begin{equation*}
    V(G_{14}) = \{X_1, X_2, X_3, Y_1, Y_2, Y_{3,1}, Y_{3,2}, K, L, M, N, Q, R, S\}
\end{equation*}
and its edge set is 
\begin{align*}
    E(G_{14}) = 
    &\{X_1Y_1, X_1Y_2, X_2Y_1, X_2Y_2, X_3Y_1, X_3Y_2, X_1Y_{3,1}, X_3K, KY_{3,1}, X_2L, LM, 
MY_{3,2},\\
    & Y_2K, Y_1K, X_1X_2, X_3S, LQ, LN, MQ, MN, RY_{3,2}, RQ, RN, RS, SQ, SN\}.
\end{align*}
We construct the graph $G_{13}$ from $G_{14}$ by identifying the vertices $Y_{3,1}$ and $Y_{3,2}$; the resulting identification vertex is denoted by $Y_3$.  See the right part of \autoref{New}.

\begin{lemma}\label{nonpl}
The graph $G_{13}$ is not planar.
\end{lemma}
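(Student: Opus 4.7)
The plan is to show that $G_{13}$ contains a $K_{3,3}$ minor, and then invoke Wagner's theorem (equivalently, Kuratowski's theorem). The construction is motivated by the observation that each of $Y_1, Y_2, Y_3$ is already adjacent to at least one of $X_1, X_2, X_3$ directly or through the ``gadget'' built from $K, L, M, R, S$. So the idea is to take $\{Y_1\}, \{Y_2\}, \{Y_3\}$ as one side of the bipartition and to grow three disjoint connected branch sets around $X_1, X_2, X_3$ to serve as the other side.

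Concretely, the plan is to take the six branch sets
\[
A_1 = \{X_1\}, \quad A_2 = \{X_2, L, M\}, \quad A_3 = \{X_3, R, S\}, \quad B_j = \{Y_j\} \text{ for } j = 1, 2, 3.
\]
These are pairwise disjoint by inspection, and each is connected in $G_{13}$: the singletons trivially, $A_2$ via the edges $X_2 L$ and $L M$, and $A_3$ via the edges $X_3 S$ and $S R$.

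The next step is to verify that between any branch set $A_i$ and any branch set $B_j$ there is an edge of $G_{13}$. For $A_1$ this is the edge $X_1 Y_j$ (with $Y_{3,1}$ identified to $Y_3$). For $A_2$ one uses the edges $X_2 Y_1$, $X_2 Y_2$ and $M Y_{3,2}$ (again using that $Y_{3,2} = Y_3$ in $G_{13}$). For $A_3$ one uses the edges $X_3 Y_1$, $X_3 Y_2$ and $R Y_{3,2}$. This gives the full set of nine required edges and thus a $K_{3,3}$ minor, so by Wagner's theorem $G_{13}$ is not planar.

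The argument is essentially a bookkeeping check on the edge list, and I do not expect any real obstacle: the construction of $G_{14}$ already exhibits a $K_{3,2}$ between $\{X_1, X_2, X_3\}$ and $\{Y_1, Y_2\}$ explicitly, and the only reason one does not get $K_{3,3}$ inside $G_{14}$ itself is that the ``third'' $Y$ vertex is split into the two copies $Y_{3,1}$ and $Y_{3,2}$. The identification that produces $G_{13}$ is precisely what merges the two pendant connections to $X_1$ (through $Y_{3,1}$) and to $X_2, X_3$ (through $M Y_{3,2}$ and $R Y_{3,2}$) into one common neighbour of the three branch sets, so the $K_{3,3}$ minor appears immediately.
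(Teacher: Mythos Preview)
Your proof is correct and follows essentially the same approach as the paper: both exhibit a $K_{3,3}$ minor with the two sides built around $\{X_1,X_2,X_3\}$ and $\{Y_1,Y_2,Y_3\}$. The only cosmetic difference is that for the branch set containing $X_3$ you use $\{X_3,R,S\}$ and the edge $RY_3$, whereas the paper contracts the path $X_3KY_3$ (i.e.\ uses $\{X_3,K\}$ and the edge $KY_{3,1}$); both choices work equally well.
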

\begin{proof}
We contract in the graph $G_{13}$ the paths $X_2LMY_3$ and $X_3KY_3$ each to a single edge.
The resulting graph contains all edges between the two vertex sets $\{X_1, X_2, X_3\}$ and $\{Y_1, Y_2, Y_3\}$.
So $G_{13}$ has $K_{3,3}$ as a minor. So $G_{13}$ cannot be planar as it has a nonplanar minor. 
\end{proof}

We make two essential reminders. Firstly, consider the canonical embedding of $C'$. The paths 
$P_2$ and $P_4$ are constructed so that there is a sequence of three consecutive diagonal edges 
pointing in the same direction. For example in $P_2$ as given in \autoref{FH} a possible choice of 
such sequence is the third, the fourth and the fifth edge after the vertex $A$. Secondly, every 
fundamental cycle obtained by adding an edge in $E'\backslash T'$ to $T'$ contains 
at least one of the paths $P_2$ and $P_4$ as a subpath by construction. Thus, fixing a fundamental 
cycle $o$ in $T'$, we find a path $e_1, e_2, e_3$ of three consecutive diagonal edges in 
the  same direction. We denote the four vertices in this path of three edges $e^-_1, e^+_1\equiv e^-_2, 
e^+_2\equiv e^-_3$ and $e^+_3$.\par

\begin{observation}\label{Ob 5.3}
The link graph at the vertex $e^+_2$ of $C'/e_2$ (where $e^+_1\equiv e^-_2\equiv e^+_2\equiv e^-_3$ in $C'/e_2$) is equal to $G_{14}$.
\qed
\end{observation}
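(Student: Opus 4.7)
This is a purely local computation, so my plan is to unpack the link graph via four bookkeeping steps and then exhibit the isomorphism.

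I would first fix coordinates so that the common direction of the three collinear diagonals $e_1, e_2, e_3$ is $(1,1,0)$, and $v_0 := e^+_1 = e^-_2$, $v_1 := e^+_2 = e^-_3$ are consecutive integer lattice points along this line. Since $P_2$ and $P_4$ are constructed inside $12\times 12\times 12$ subcubes lying in the interior of $C$, both $v_0, v_1$ are degree-$6$ vertices of $C$, so their link graphs in $C$ are the octahedron $W^2$ on the six axis labels $\{\pm x, \pm y, \pm z\}$ (as recorded in the proof of Observation \ref{2-conn}). Passing from $C$ to $C'$ then has the local effect of subdividing exactly one link-edge per diagonal, namely the link-edge joining the two axis labels spanning the face in which the diagonal lies. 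In particular, at $v_0$ the link-vertex $E_1$ subdivides the $(-x)/(-y)$ edge and $E_2$ subdivides the $(+x)/(+y)$ edge; at $v_1$ the mirror picture yields $E_2'$ subdividing the $(-x)/(-y)$ edge and $E_3$ subdividing the $(+x)/(+y)$ edge. This determines each of the two link graphs at $v_0, v_1$ in $C'$ explicitly on $8$ vertices and $14$ edges.

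Next I would carry out the vertex sum that computes the link graph of $C'/e_2$ at the identified vertex. Since $e_2$ is not a loop, this link graph is the vertex sum of the two above $8$-vertex graphs at $E_2$ and $E_2'$, with the bijection of the two neighbors of $E_2$ to the two neighbors of $E_2'$ dictated by the two triangular faces $T_1, T_2$ of $C'$ containing $e_2$. Inspecting $T_1, T_2$ shows that the $(+x)$-neighbor of $E_2$ at $v_0$ matches with the $(-y)$-neighbor of $E_2'$ at $v_1$ (through $T_1$), and symmetrically the $(+y)$-neighbor of $E_2$ matches with the $(-x)$-neighbor of $E_2'$ (through $T_2$). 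Deleting $E_2, E_2'$ and adding these two cross-edges produces a graph on $14$ vertices and $26$ edges with exactly two degree-$2$ vertices, namely $E_1$ and $E_3$.

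Finally I would exhibit an explicit isomorphism to $G_{14}$. Since $G_{14}$ also has exactly two degree-$2$ vertices, the map is forced to send $\{E_1, E_3\}$ onto $\{Y_{3,1}, Y_{3,2}\}$; the remaining twelve axis-labels are then matched with $\{X_1, X_2, X_3, Y_1, Y_2, K, L, M, N, Q, R, S\}$ by propagating outward from $Y_{3,1}, Y_{3,2}$ along the degree-$4$ edges. I expect this last matching step to be the only obstacle: it is mechanical but requires careful labelling, and once the two modified octahedra and the two cross-edges of the vertex sum are drawn side by side, verifying that all $26$ edges correspond under the proposed bijection is routine.
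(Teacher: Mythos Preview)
Your proposal is correct and is exactly the direct local computation that the paper's bare \qed leaves implicit: the two link graphs at $v_0,v_1$ in $C'$ are octahedra with two subdivided edges each, the vertex sum along $e_2$ is governed by the two triangular faces on $e_2$ (giving the cross-matching $(+x)\leftrightarrow(-y')$ and $(+y)\leftrightarrow(-x')$ you describe), and the resulting $14$-vertex, $26$-edge graph is then identified with $G_{14}$ by pinning its two degree-$2$ vertices to $Y_{3,1},Y_{3,2}$. The only point you (and the paper) leave tacit is that no diagonals of $T'$ other than $e_1,e_2,e_3$ are incident to $v_0$ or $v_1$; this is not automatic from the tree-extension in Lemma~\ref{spanning_tree_extend}, but can always be arranged using the freedom in extending $P_2$ (resp.\ $P_4$) to the spanning tree $T^b_1$ (resp.\ $T^b_2$).
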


Recall that the double wheel graph $W^2$ is a graph on six vertices, which is the complement of a perfect matching. Notice that for every edge $e$ of $W^2$ the graph $W^2\backslash e$ is the same. We call this graph \textit{modified double wheel graph} and denote it by $W^{2-}$.

\begin{observation}\label{Ob4}
Subdivisions of the double wheel graph $W^2$ and of the modified double wheel graph $W^{2-}$ are $2-$connected.
\qed
\end{observation}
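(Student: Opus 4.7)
The plan is to reduce the statement, via Observation \ref{2-conn'}, to showing that $W^2$ and $W^{2-}$ themselves are $2$-connected; then the observation follows because any subdivision of a $2$-connected graph is $2$-connected.

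For $W^2$, I would label its six vertices as $\{v_i, v_i'\}_{i \in \{1,2,3\}}$ so that the three non-edges of $W^2$ are exactly the partner pairs $\{v_i, v_i'\}$; this identifies $W^2$ with the octahedron $K_{2,2,2}$. For any vertex $v$, its partner is adjacent to each of the remaining four vertices, so $W^2 - v$ is connected via that partner as a universal vertex. By the vertex-transitivity of $W^2$ this shows $W^2$ is $2$-connected (in fact $4$-connected).

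For $W^{2-}$, I would invoke the standard fact that if $G$ is $k$-connected on at least $k+1$ vertices and $e \in E(G)$, then $G - e$ is $(k-1)$-connected. The argument in one sentence: a hypothetical cut $S$ of $G-e$ with $|S| \le k-2$ would leave $G - S$ connected (since $G$ is $k$-connected), so $e = uv$ must be a bridge of $G - S$; but then either $u$ has $G$-degree at most $k-1$ (contradicting $\delta(G) \ge k$) or the set $S \cup \{u\}$ is a cut of $G$ of size at most $k-1$ (again a contradiction). Applying this to $G = W^2$ with $k = 4$ yields that $W^{2-}$ is $3$-connected, hence in particular $2$-connected.

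Combining these two facts with Observation \ref{2-conn'} yields the observation. I do not anticipate any serious obstacle: once $W^2$ is recognised as the octahedron, both $2$-connectedness assertions are immediate, and the reduction from subdivisions is the content of Observation \ref{2-conn'}.
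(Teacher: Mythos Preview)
Your reduction via Observation~\ref{2-conn'} is exactly right, and the paper itself offers no argument at all here (the observation is simply marked \qed), so there is nothing to compare against. Your identification of $W^2$ with the octahedron $K_{2,2,2}$ and the universal-partner argument for its $2$-connectivity are clean.

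One small point worth tightening: your argument as written only \emph{proves} that $W^2$ is $2$-connected; the $4$-connectivity is asserted parenthetically. But it is $3$-connectivity (at least) that your edge-deletion lemma needs in order to conclude that $W^{2-}$ is $2$-connected. The fix is immediate---your own observation already gives it: in $W^2 - v$ the partner $v'$ is universal among the remaining four vertices, and those four form a $4$-cycle, so $W^2 - v$ is itself $2$-connected, whence $W^2$ is $3$-connected. Alternatively, one can simply check $W^{2-}$ directly: after deleting one edge and then any single vertex, one of the two partners of the deleted edge's endpoints still dominates the rest. Either route closes the gap in one line.
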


\begin{lemma}\label{second to last}
Let the $2-$complex $C^- = C'\backslash \{e_1, e_3\}$ be obtained from the $2-$complex $C'$ by deleting the edges $e_1$ and $e_3$. Contract the path $p$ between $e^+_3$ and $e^-_1$ in $C^-$ contained in $o$ to a single vertex. The link graph obtained at this vertex after the contraction of $p$ is $2-$connected.
\end{lemma}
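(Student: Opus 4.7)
The plan is to show that $C^-$ is locally $2$-connected at every vertex of the path $p$, and then to obtain the link graph at the merged vertex as an iterated vertex sum, to which (\citep{JC1}, Lemma 3.4) applies.

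First I would handle the interior vertices of $p$. For such a vertex $v$ (that is, $v \neq e_1^-, e_3^+$), the deleted edges $e_1$ and $e_3$ are not incident to $v$, so the link graph at $v$ in $C^-$ coincides with the link graph at $v$ in $C'$. By Observations \ref{2-conn'} and \ref{2-conn}, this link graph is a subdivision of $W^2$, $W^{2-}$, $K_4-e$, or $K_3$, all of which are $2$-connected, and hence it is itself $2$-connected.

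Next I would handle the two endpoints $e_1^-$ and $e_3^+$; these are symmetric, so I treat $e_1^-$. Since $e_1, e_2, e_3$ lie in a $12\times 12\times 12$ subcomplex deep inside $C$, the vertex $e_1^-$ is an interior vertex of $C$ and its link graph in $C$ is the double wheel $W^2$. The edge $e_1$ is a diagonal edge subdividing a single $4$-face $f$ of $C$ into two triangles $f_1, f_2$, and these are the only faces of $C'$ incident to $e_1$. Tracking the effect on the link graph, passing from $C$ to $C'$ subdivides the two link-graph edges at the face-vertex $f$ (replacing $f$ by the internal path $f_1 - e_1 - f_2$), while passing from $C'$ to $C^-$ deletes exactly the vertices $f_1, e_1, f_2$ (together with their incident edges in the link graph). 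The net effect, relative to the link graph in $C$, is therefore the deletion of the face-vertex $f$, followed by further subdivisions coming from the other diagonal edges of $T'$ incident to $e_1^-$. Since $W^2$ is isomorphic to the octahedron and hence $4$-connected, $W^2 - f$ is $3$-connected, and any subdivision of it remains $2$-connected by Observation \ref{2-conn'}. The same argument will apply verbatim at $e_3^+$.

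With local $2$-connectedness established along $p$, I would contract the edges of $p$ one at a time. No edge of $p$ is a loop, so each contraction amounts to a vertex sum of the link graphs at the two endpoints, glued at the vertex corresponding to the contracted edge; by (\citep{JC1}, Lemma 3.4), such a vertex sum of $2$-connected graphs is again $2$-connected, and a straightforward induction along $p$ yields a $2$-connected link graph at the final merged vertex. The main subtlety I anticipate is the bookkeeping at $e_1^-$ and $e_3^+$: one has to check that removing a diagonal edge together with its two triangular faces in $C^-$ corresponds precisely to deleting a single original face-vertex in the $C$-link graph, rather than an awkward degree-$2$ subdivision vertex whose removal could break $2$-connectedness.
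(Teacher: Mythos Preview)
Your overall strategy coincides with the paper's: show that the link graph at every vertex of $p$ in $C^-$ is $2$-connected, then iterate (\citep{JC1}, Lemma~3.4) along the contractions. Your treatment of interior vertices is in fact slightly more careful than the paper's, since you allow for the possibility that $p$ meets boundary vertices of $C$, whereas the paper simply asserts these link graphs are subdivisions of~$W^2$.

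Your endpoint analysis, however, is muddled by a confusion about the link-graph convention. In this paper the vertices of $L_v$ are the \emph{edges} of the complex at $v$ and the edges of $L_v$ are the \emph{faces}; this is why the link graph at a degree-$6$ vertex of $C$ is $W^2$ itself (six edge-vertices, twelve face-edges), not a subdivision thereof. There is no ``face-vertex $f$'': the face $f$ is a single edge of $W^2$. Passing from $C$ to $C'$ subdivides that one edge by the new degree-$2$ vertex $e_1$, and passing from $C'$ to $C^-$ deletes the vertex $e_1$ together with its two incident link-edges $f_1,f_2$; the objects $f_1,f_2$ are not vertices to be deleted. The net effect relative to $C$ is the removal of the \emph{edge} $f$, so the link graph at $e_1^-$ in $C^-$ is a subdivision of $W^{2-}=W^2\setminus e$, exactly as the paper states and then handles via Observation~\ref{Ob4}. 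Your conclusion survives because both $W^2$ minus a vertex and $W^2$ minus an edge happen to be $3$-connected, but the bookkeeping you flagged as the main subtlety should be redone with the correct convention.
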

\begin{proof}
Fix a vertex $s$ of $C^-$ in $p$. If $s$ is different from $e^-_1$ and $e^+_3$, the link graph at $s$ in $C^-$ is equal to the link graph at $s$ in $C'$, which is a subdivision of $W^2$. By Observation \ref{Ob4} this graph is $2-$connected. If $s$ is equal to $e^-_1$ or $e^+_3$, then the link graph at $s$ in $C^-$ is a subdivision of the modified double wheel graph, which is again $2-$connected by Observation \ref{Ob4}. By (\citep{JC1}, Lemma 3.4) vertex sums of $2-$connected graphs are 
$2-$connected, which proves the lemma.
\end{proof}

The argument behind the next proof, despite being a bit technical, is quite straightforward. Informally it states that by plugging certain graphs $L_w$ into the graph $G_{14}$ 
twice via "vertex sums" at the vertices $Y_{3,1}$ and $Y_{3,2}$ of $G_{14}$ we obtain a graph containing $G_{13}$ as a minor.

\begin{figure}
\centering
\includegraphics[scale=0.5]{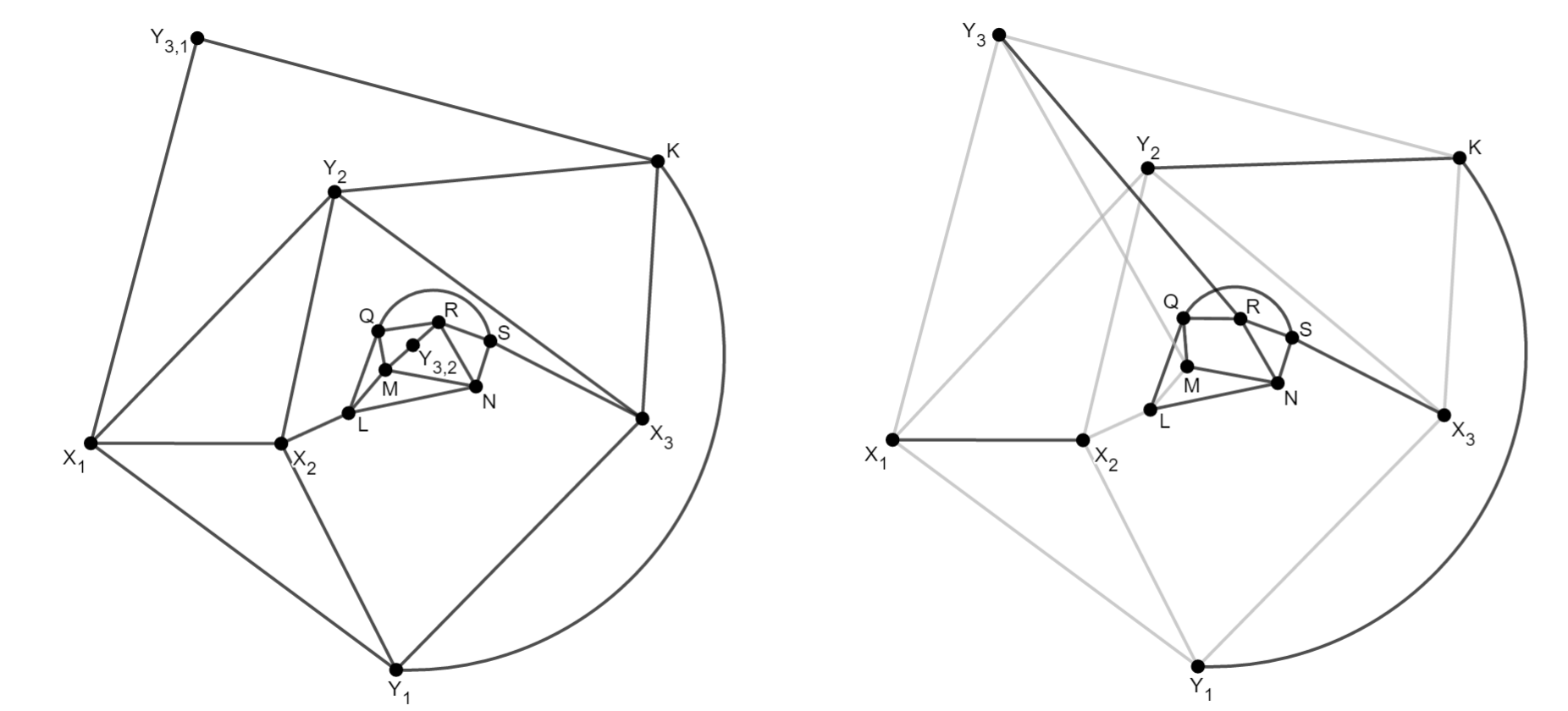}
\caption{The graph $G_{14}$ depicted on the left is obtained as link graph at the vertex $e^+_2$ after contraction of the 
edge $e_2$ in $C'$. After identification of $Y_{3, 1}$ and $Y_{3, 2}$ in $G_{14}$ we 
obtain the graph $G_{13}$ shown on the right. The subdivision of $K_{3,3}$ in $G_{13}$ is given in grey.}
\label{New}
\end{figure}

\begin{lemma}\label{minor}
Let $o$ be a fundamental cycle in $T'$. Contract the cycle $o$ to a vertex $\underline{o}$. 
Then, the link graph $L_{\underline{o}}$ at the vertex $\underline{o}$ in $C'/o$ has $G_{13}$ as a minor.
\end{lemma}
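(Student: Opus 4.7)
The plan is to compute $L_{\underline{o}}$ by staging the contraction of $o$: exploiting the commutativity of edge contractions (Lemma~\ref{comm}), I would contract the middle diagonal edge $e_2$ first and only then the remaining edges of $o$. After the contraction of $e_2$, Observation~\ref{Ob 5.3} identifies the link graph at the merged vertex $v = e_1^+ = e_2^- = e_2^+ = e_3^-$ in $C'/e_2$ as precisely $G_{14}$. What remains to contract is the closed walk $o' = e_1 \cup e_3 \cup p$ in $C'/e_2$, where $p$ is the subpath of $o$ inside $T'$ joining $e_3^+$ to $e_1^-$ (and avoiding $e_1, e_2, e_3$).

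Next I would contract the edges of $o'$ one by one, starting at $v$, going along $e_1$ to $e_1^-$, then along $p$ to $e_3^+$, and closing via $e_3$, so that only the final contraction is that of a loop. As described in the paragraphs preceding Lemma~\ref{prove 5.1}, each non-loop contraction induces a vertex sum of the current link graph at the growing merged vertex (initially $G_{14}$) with the link graph at the newly absorbed endpoint at the pair of vertices corresponding to the contracted edge, while the final loop contraction induces an internal vertex sum. The key structural feature of these operations is that they only delete the two vertices of $G_{14}$ representing the edges $e_1$ and $e_3$, and they add new vertices and edges coming from the absorbed link graphs; consequently, the remaining vertices and non-incident edges of $G_{14}$ persist inside $L_{\underline{o}}$.

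The heart of the argument is to show that, after these successive vertex sums, $Y_{3,1}$ and $Y_{3,2}$ are connected in $L_{\underline{o}}$ by a path that is internally disjoint from the rest of $G_{14}\setminus \{Y_{3,1}, Y_{3,2}\}$, so that contracting this path merges $Y_{3,1}$ with $Y_{3,2}$ and yields $G_{13}$ as a minor (by the very definition of $G_{13}$ as the graph obtained from $G_{14}$ by identifying these two vertices). Both $Y_{3,1}$ and $Y_{3,2}$ arise as the two face-corners at $v$ of the face $f$ of $C'$ that contains $e_2$ on its boundary; since $e_1, e_2, e_3$ are three consecutive collinear diagonal edges lying in a coplanar strip of $C$, the face $f$ is situated adjacent along its boundary to faces incident to $e_1$ and $e_3$ (and hence to the initial and terminal vertices of $p$). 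The corners of these adjacent faces, produced by the vertex sums along $p$, form the desired path in $L_{\underline{o}}$ linking $Y_{3,1}$ to $Y_{3,2}$ through corners of $f$.

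The main obstacle is precisely this last step: rigorously tracing which face-corners of $C'$ become connected to which through the successive vertex and internal vertex sums, and checking that the resulting path between $Y_{3,1}$ and $Y_{3,2}$ is disjoint from the rest of $G_{14}$. This is essentially a local combinatorial verification, guided by the planar arrangement of the three square faces of $C$ containing $e_1, e_2, e_3$, their subdivisions in $C'$, and the way the tree-path $p$ interacts with diagonal edges of $T'$ in the neighborhood of these three faces.
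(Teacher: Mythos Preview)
Your overall staging is sound and close to the paper's: by Lemma~\ref{comm} you may reorder, contract $e_2$ to obtain $G_{14}$ at the merged vertex (Observation~\ref{Ob 5.3}), and then feed in the rest of $o$ via vertex sums and one internal vertex sum. Where you diverge from the paper, and where a genuine gap opens, is in the last paragraph.

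First, a factual slip: $Y_{3,1}$ and $Y_{3,2}$ are not ``the two face-corners at $v$ of the face $f$ containing $e_2$.'' In the link graph, vertices correspond to \emph{edges} of $C'$ incident to the vertex; $Y_{3,1}$ is the link-vertex for $e_1$ and $Y_{3,2}$ is the link-vertex for $e_3$ (this is exactly why they are the two vertices that get deleted in the vertex sums induced by contracting $e_1$ and $e_3$). Your proposed trace through ``corners of $f$'' is therefore built on the wrong picture, and the ``local combinatorial verification'' you defer to is neither carried out nor correctly set up.

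Second, and more importantly, you do not need any such trace. The paper avoids it entirely by a different (and simpler) order: contract \emph{all of $p$ first}, then $e_2$, then $e_1$, then $e_3$. After contracting $p$ (inside $C'\setminus\{e_1,e_3\}$) the link graph $L_w$ at the merged vertex $w=e_1^-=e_3^+$ is $2$-connected by Lemma~\ref{second to last}. The subsequent vertex sum at $(Y_{3,1},Y'_{3,1})$ and internal vertex sum at $(Y_{3,2},Y'_{3,2})$ then realise $L_{\underline o}$ as $G_{14}\setminus\{Y_{3,1},Y_{3,2}\}$ glued to $L_w\setminus\{Y'_{3,1},Y'_{3,2}\}$ along the matching edges. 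Since $L_w\setminus\{Y'_{3,1},Y'_{3,2}\}$ is connected (being a $2$-connected graph minus two vertices is not needed here; connectedness suffices and follows from Lemma~\ref{second to last}), contracting this entire induced subgraph inside $L_{\underline o}$ identifies $Y_{3,1}$ with $Y_{3,2}$ and produces $G_{13}$ as a minor. This replaces your undone geometric path-chase by a one-line application of Lemma~\ref{second to last}, which is precisely what that lemma is there for.
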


\begin{proof}
By Lemma \ref{comm} contractions of edges of a $2-$complex commute. Thus, we contract the edges of 
the cycle $o$ in the following order:
\begin{enumerate}
    \item We contract all edges except for $e_1, e_2, e_3$;
    \item we contract $e_2$, $e_1$ and $e_3$ in this order.
\end{enumerate}
We now follow in detail each of the described contractions. Let $L_w$ and $L_u$ be the link graphs at the vertices $w = e^{-}_1 = e^{+}_3$ and $u = e^{+}_2 = e^{-}_2$ respectively just before the contraction of the edge $e_1$ of $C'$. They are both $2-$connected as vertex sums of $2-$connected graphs. Let $Y'_{3,2}$ and $Y'_{3,1}$ correspond to the edges $e_3$ and $e_1$ respectively in the link graph $L_w$ at the vertex $w$. Analogously $Y_{3,2}$ and $Y_{3,1}$ correspond to the edges $e_3$ and $e_1$ respectively in the link graph $L_u$ at the vertex $u$, which is equal to $G_{14}$ by Observation \ref{Ob 5.3}. See \autoref{New}. Contractions of $e_1$ and $e_3$ produce the $2-$complex $C'/o$. The link graph $L_{\underline{o}}$ at the vertex $\underline{o}$ in $C'/o$ is obtained from $L_w$ and 
$L_u$ by performing:

\begin{itemize}
    \item A vertex sum between $L_w$ and $L_u$ at $Y'_{3,1}$ and $Y_{3,1}$ respectively. Call this vertex sum $L$.
    \item An internal vertex sum within $L$ at the vertices $Y'_{3,2}$ and $Y_{3,2}$.
\end{itemize}

The internal vertex sum within $L$ forms the link graph $L_{\underline{o}}$.\par
By Lemma \ref{second to last} the graph $L_{w}\backslash \{Y'_{3,1}, Y'_{3,2}\}$ is $2-$connected, so connected in particular. It is also 
realised as an induced subgraph of $L_{\underline{o}}$ by restricting $L_{\underline{o}}$ to the set of vertices inherited from $L_{w}$ (all except $Y'_{3,1}$ and $Y'_{3,2}$). The contraction of the edges of this induced subgraph within $L_{\underline{o}}$ is equivalent to identifying $Y_{3,1}$ and $Y_{3,2}$ in $L_u = G_{14}$. This proves the lemma.
\end{proof}

We are ready to prove Lemma \ref{prove 5.1}.
\begin{proof}[Proof of Lemma \ref{prove 5.1}]
By Lemma \ref{nonpl}, $G_{13}$ is not planar. At the same time, $G_{13}$ is a minor of the link graph $L_{\underline{o}}$ at the vertex $\underline{o}$ of $C'/o$ of by 
Lemma \ref{minor}. As contraction of edges preserves planarity, $L_{\underline{o}}$ is not planar as well.
\end{proof}

\section{Conclusion}\label{sec6}
In this paper we provided an example of a simply connected $2-$complex $C ''= (V'', E'', 
F'')$ embeddable in $3-$space such that the contraction of any edge $e$ of $C''$ in the 
abstract sense produces a $2-$complex $C ''/e$, which cannot be embedded in $3-$space. This 
construction opens a number of questions. Some of them are given below.\par

\begin{question}
Is there a structural characterisation of the (simply connected) $2-$complexes 
with exactly one vertex embeddable in $3-$space with the above property?
\end{question}

\begin{question}
Is there a structural characterisation of the (simply connected) $2-$complexes 
with exactly one vertex admitting an embedding in $3-$space without edges forming nontrivial 
knots?
\end{question}

\begin{question}
Is there a structural characterisation of the (simply connected) $2-$complexes 
such that each of their edge-contractions admits an embedding in $3-$space?
\end{question}

\section{Acknowledgements}
The second author would like to thank Nikolay Beluhov for a number of useful discussions.

\bibliographystyle{alpha}
\bibliography{Bibliography}

\end{document}